\newcommand{\Z}{\mathbb{Z}}
\newcommand{\R}{\mathbb{R}}
\newcommand{\N}{\mathbb{N}}
\newcommand{\Q}{\mathbb{Q}}
\newcommand{\C}{\mathbb{C}}
\newcommand{\paren}[1]{\ensuremath{\left( #1 \right)}}
\newcommand{\set}[1]{\ensuremath{\left\{ #1 \right\}}}
\newcommand{\innerprod}[1]{\ensuremath{\left< #1 \right>}}
\newcommand{\norm}[1]{\ensuremath{\left\| #1 \right\|}}
\newcommand{\abs}[1]{\ensuremath{\left| #1 \right|}}
\newcommand{\setdiv}{\,\middle|\,}
\newcommand{\e}[1]{e\paren{#1}}
\newcommand{\summod}[1]{\ensuremath{\,(\mathrm{mod}\,#1)}}
\newcommand{\pderv}[3][]{{\frac{\partial^{#1}#2}{\partial{#3}^{#1}}}}
\newcommand{\Matrix}[1]{\begin{pmatrix}#1\end{pmatrix}}
\newcommand{\SmallMatrix}[1]{\left(\begin{smallmatrix}#1\end{smallmatrix}\right)}
\newcommand\revdots{\mathinner{\mkern1mu\raise1pt\vbox{\kern7pt\hbox{.}}\mkern2mu\raise4pt\hbox{.}\mkern2mu \raise7pt\hbox{.}\mkern1mu}}
\newcommand{\Max}[1]{\ensuremath{\max \set{#1}}}
\newcommand{\Min}[1]{\ensuremath{\min \set{#1}}}
\newcommand{\floor}[1]{\ensuremath{\left\lfloor #1 \right\rfloor}}
\newcommand{\piecewise}[1]{\left\{\begin{matrix}#1\end{matrix}\right.}
\newcommand{\If}{\mbox{if }}
\newcommand{\Otherwise}{\mbox{otherwise}}
\renewcommand{\Re}{{\mathop{\mathgroup\symoperators Re}}}
\renewcommand{\Im}{{\mathop{\mathgroup\symoperators Im}}}
\newcommand{\sgn}{{\mathop{\mathgroup\symoperators \,sgn}}}
\newcommand{\wbar}[1]{\overline{#1}}
\newcommand{\wtilde}[1]{\widetilde{#1}}
\newcommand{\what}[1]{\widehat{#1}}
\newcommand{\trans}[1]{{#1}^T}
\theoremstyle{plain} 
\newtheorem{thm}{Theorem}
\newtheorem{cor}[thm]{Corollary}
\newtheorem{prop}[thm]{Proposition}
\theoremstyle{remark}
\DeclareMathAlphabet{\mathcalligra}{T1}{calligra}{m}{n}
\newcommand{\WigDName}{\mathcal{D}}
\newcommand{\WigDMat}[1]{\WigDName^{#1}}
\newcommand{\WigDRow}[2]{\WigDMat{#1}_{#2}}
\newcommand{\WigD}[3]{\WigDMat{#1}_{#2,#3}}
\newcommand{\WigdName}{\mathcalligra{d}}
\newcommand{\Wigd}[3]{\WigdName^{#1}_{#2,#3}}
\DeclareMathOperator{\Tr}{Tr}
\DeclareMathOperator*{\res}{res}
\DeclareMathOperator{\diag}{diag}
\newcommand{\tildek}[1]{\wtilde{k}\paren{#1}}
\newcommand{\Dtildek}[2]{\mathcal{R}^{#1}\paren{#2}}
\newcommand{\dualWigDMat}[1]{\wtilde{\WigDName}^{#1}}
\newcommand{\dualWigDRow}[2]{\dualWigDMat{#1}_{#2}}
\newcommand{\AdSq}{{\mathrm{Ad}}^2}
\newcommand{\forcetextnewline}{\\}
\newcommand{\Sigmachi}[2]{\Sigma^{#1}_{_{#2}}}
\newcommand{\vpmpm}[1]{v_{_{#1}}}
\title{Higher weight on $GL(3)$, I:\\The Eisenstein series.}
\author{Jack Buttcane}
\date{10 January 2017}
\address{Mathematics Department, 244 Mathematics Building, Buffalo, NY 14260, USA}
\email{buttcane@buffalo.edu}
\begin{document}

\begin{abstract}
The purpose of this paper is to collect and make explicit the results of Langlands \cite{Langlands02}, Bump \cite{Bump01}, Miyazaki \cite{Miya01} and Manabe, Ishii and Oda \cite{ManIshOda} for the $GL(3)$ Eisenstein series and Whittaker functions which are non-trivial on $SO(3,\R)$.
The final goal for the series of papers is a complete and completely explicit spectral expansion for $L^2(SL(3,\Z)\backslash SL(3,\R))$ in the style of Duke, Friedlander and Iwaniec's paper \cite{DFI01}.
We derive a number of new results on the Whittaker functions and Eisenstein series, and give new, concrete proofs of the functional equations and spectral expansion in place of the general constructions of Langlands.
\end{abstract}

\subjclass[2010]{Primary 11F72; Secondary 11F30}

\maketitle

\section{Introduction}
In a series of papers \cite{Sel01,Sel02,Sel03}, Selberg gave an analysis of the $L^2$-spaces on compact quotients $\Gamma\backslash G/K$ with $\Gamma$ discrete and $K$ compact, in terms of the eigenfunctions of the natural translation-invariant differential operators, subject to some assumptions on the group $G$.
Langlands \cite{Langlands01,Langlands02} then gave a vast generalization to the case of finite-volume $\Gamma\backslash G$ where $G$ is a connected reductive Lie group, subject to an additional (widely satisfied) technical assumption that $G$ has a finite basis of parabolic subgroups.
Langlands' spectral expansion is a tempting resource for analytic number theorists, but the level of abstraction makes it nearly impossible to apply.

The key component of the spectral expansion of Langlands is a basis of the continuous spectrum consisting of generalized Eisenstein series; one must show the meromorphic continuation and functional equations of these objects, and his approach was to prove the meromorphic continuation directly and show that the functional equations of the constant terms, i.e. the zeroth Fourier coefficients, must hold for the series as a whole.
We may approach the problem instead by considering the full Fourier expansion, a la Piatetski-Shapiro \cite{PS01} and Shalika \cite{Shal01}, in terms of generalized Whittaker functions.
The analytic continuation and functional equations of these Whittaker functions give the necessary results for the Eisenstein series.
This approach was initiated by Jacquet \cite{Jac01}.

In the specific case $\Gamma=SL(3,\Z)$ and $G=PSL(3,\R)$, there are two distinct types of Eisenstein series: one type attached to the minimal parabolic and one attached to the maximal parabolic.
Bump \cite{Bump01} (and independently Vinogradov and Takhtadzhyan \cite{VinTaht}) has given a very explicit computation of the Fourier-Whittaker expansion of the minimal parabolic Eisenstein series, and following Jacquet's approach, its meromorphic continuation and functional equations.
In the process he gave an explicit form for the generalized Whittaker function, and identified its Mellin transform (an idea which Stade \cite{Stade01} has generalized to $GL(n)$).
His results apply to the spectral expansion in the so-called ``spherical'' case, i.e. on $L^2(\Gamma\backslash G/K)$ with $K=SO(3,\R)$.
Manabe, Ishii and Oda \cite{ManIshOda}, as well as Miyazaki \cite{Miya01} have computed the Mellin transform of the Whittaker functions in the minimal weight case, which are the spherical case plus those attached to the 3-dimensional representation of $SO(3,\R)$.
Miyazaki's results are actually slightly more general, but not in a way that is useful to the current discussion.

The Fourier-Whittaker expansion, and hence the meromorphic continuation and functional equations, of the maximal parabolic Eisenstein series in the sense of Piatetski-Shapiro and Shalika can, in principle, be found in Shahidi's book \cite{Shah01}, and by Imai and Terras \cite{ImaiTerras} in an entirely different sense.
In practice, we prefer the Fourier expansion of Piatetski-Shapiro and Shalika over that of Imai and Terras, but Shahidi's book is insufficiently explicit with respect to the various measures, and does not consider the ramified places (e.g. for non-spherical forms).
Miyazaki \cite{Miya01} gives a computation of the Fourier coefficients of the maximal parabolic Eisenstein series; we will give a slightly different derivation in this paper.

Finally, in a paper on Artin $L$-functions, Duke, Friedlander and Iwaniec \cite{DFI01}, have given a fully explicit basis of $GL(2)$ Maass forms at each weight $k \in \Z$.
They accept as input the Hecke eigenvalues of spherical Maass forms and holomorphic modular forms and give the basis in terms of the Fourier-Whittaker expansion.

Our goal is to specialize the results of Langlands to the case $\Gamma=SL(3,\Z)$, $G=PSL(3,\R)$, but in the more general, non-spherical case, i.e. on $L^2(\Gamma\backslash G)$; we take a path intermediate to those of Langlands and Jacquet.
As we are strongly interested in applying this to Kuznetsov-type formulae, we require the Fourier expansions of all relevant Maass forms.
The larger goal for this paper and its successor is to replicate the theorem of Duke, Friedlander and Iwaniec for $GL(3)$ Maass forms.
In fact, the construction and analysis of the maximal parabolic Eisenstein series will use their theorem as input.

The $L^2$-space splits into a direct sum over the representations of $K$; up to isomorphism, these are given by the Wigner $\WigDName$-matrices $\WigDMat{d}:K \to SO(2d+1,\C)$ for each integer $d \ge 0$, which we call the weight.
It is common to collect scalar-valued functions on $\Gamma\backslash G$ whose $K$-part lies in the span of the matrix coefficients of some $\WigDMat{d}$ into row vector valued functions which transform as $f(\gamma g k) = f(g)\WigDMat{d}(k)$, for $\gamma \in \Gamma$, $g\in G$, $k\in K$.
The vector-valued Eisenstein series and Whittaker functions can be further collected into matrix-valued forms.

In section \ref{sect:WhittFuncs}, for each $d$, we construct a matrix-valued Whittaker function $W^d(g,\mu,\psi)$ with spectral parameters $\mu$, character $\psi$ and $K$-type $\WigDMat{d}$ (see \eqref{eq:LEWhittDef}).
We compute all of its degenerate forms explicitly in terms of the classical Whittaker function, and for the non-degenerate form, we compute its Mellin transform as a sum of one-dimensional Mellin-Barnes integrals.
We determine the poles of the Mellin transform, generalizing in the weight direction a project of Stade \cite{Stade01} for the $GL(n)$ Whittaker functions, and note it has exponential decay in tube domains.
This gives us the analytic continuation of the Whittaker function, and we give some basic bounds on the Whittaker function which show the rapid decay in the $y$ components.
We also compute the functional equations and briefly prove the absolute convergence of the Fourier expansion of Piatetski-Shapiro and Shalika.

In section \ref{sect:MinPara}, we construct a matrix-valued minimal parabolic Eisenstein series $E^d(g,\mu)$ with spectral parameters $\mu$ and $K$-type $\WigDMat{d}$ (see \eqref{eq:MinParaEisenDef}).
Relying on the computations of Bump \cite{Bump01}, we give its full, explicit Fourier-Whittaker expansion.
We obtain an initial meromorphic continuation from the Fourier expansion, but prove the functional equations from the functional equations of all of the constant terms, in a modification of Langlands' approach.

In section \ref{sect:MaxPara}, we construct matrix-valued maximal parabolic Eisenstein series $E^d(g,\Phi,\mu_1)$ with spectral parameter $\mu_1$ and $K$-type $\WigDMat{d}$ attached to the $GL(2)$ Maass form $\Phi$ (see \eqref{eq:MaxParaDef}).
Relying on the computations of Miyazaki \cite{Miya01}, we give their Fourier-Whittaker expansions, and give their analytic continuation and functional equations.

Lastly, in section \ref{sect:ResSpec}, we consider the poles of the minimal parabolic Eisenstein series, and show that their contribution to the spectral expansion can be expressed in terms of the constant function and a maximal parabolic Eisenstein series attached to the $GL(2)$ constant function.

Collectively, this leads us to the following:
For each integer $d \ge 0$, we put
\[ \mathcal{S}_2^d = \set{\sqrt{\tfrac{6}{\pi}}}\cup \wtilde{\mathcal{S}}_0 \bigcup_{0 < k \le d, \, 2|k} \wtilde{\mathcal{S}}_k, \]
where $\wtilde{\mathcal{S}}_0$ is an orthonormal basis of $SL(2,\Z)$ Hecke-Maass cusp forms\footnote{The $L^2$-normalized constant function on $SL(2)$ is usually $\sqrt{\frac{3}{\pi}}$, the spare 2 here is the size of the group $\wbar{V}_2$ in section \ref{sect:MaxParaBruhat}.}, and $\wtilde{\mathcal{S}}_{k}$ is an orthonormal basis of holomorphic Hecke modular forms of weight $k$.
The primary goal of the paper is to make explicit the spectral expansion of Langlands, which becomes
\begin{thm}
\label{thm:ContResSpectralExpand}
	For $f:\Gamma\backslash G \to \C$ smooth and compactly supported, the function $f-f_0-f_1-f_2$ with
	\begin{align*}
		f_0(g) =& \frac{1}{24} \sum_{d=0}^\infty \frac{(2d+1)}{(2\pi i)^2} \int_{\Re(\mu) = 0} \Tr\Bigl(E^d(g, \mu) \int_{\Gamma\backslash G} f(g') \wbar{\trans{E^d(g', \mu)}} dg' \Bigr) \, d\mu,
	\end{align*}
	\begin{align*}
		f_1(g) =& \sum_{d=0}^\infty \sum_{\Phi\in\mathcal{S}_2^d} \frac{(2d+1)}{2\pi i} \int_{\Re(\mu_1) = 0} \Tr\biggl(E^d(g, \Phi, \mu_1) \int_{\Gamma\backslash G} f\paren{g'} \wbar{\trans{E^d(g', \Phi, \mu_1)}} \, dg' \biggr) d\mu_1,
	\end{align*}
	\begin{align*}
		f_2 =& \frac{1}{4/\zeta(3)} \int_{\Gamma\backslash G} f(g) dg,
	\end{align*}
	is square-integrable and cuspidal.
\end{thm}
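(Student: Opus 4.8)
The plan is to identify the three pieces $f_0$, $f_1$, $f_2$ as the orthogonal projections of $f$ onto, respectively, the continuous spectrum coming from the minimal parabolic, the continuous spectrum coming from the maximal parabolic, and the one-dimensional residual spectrum (the constants); the theorem then says exactly that the residual $f - f_0 - f_1 - f_2$ is orthogonal to all three and hence lies in the cuspidal subspace, which for $GL(3)$ we know is all of $L^2_{\mathrm{cusp}}$ by Piatetski--Shapiro and Shalika. So the work is to show (a) each of $f_0$, $f_1$, $f_2$ is a well-defined element of $L^2(\Gamma\backslash G)$, (b) the three subspaces they live in are mutually orthogonal and orthogonal to $L^2_{\mathrm{cusp}}$, and (c) together with $L^2_{\mathrm{cusp}}$ they span everything, with the displayed formulas giving the correct projection in each case.

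First I would handle $f_2$: this is immediate, since $\{\sqrt{\zeta(3)/4}\}$ — equivalently the constant function with the stated $L^2$-normalization — spans the residual-spectrum line identified in section \ref{sect:ResSpec}, and $f_2 = \langle f, \mathbf{1}\rangle \mathbf{1}$ up to getting the normalizing constant $4/\zeta(3) = \mathrm{vol}(\Gamma\backslash G)$ right; this is where the footnote bookkeeping about $\wbar V_2$ and the factor $2$ enters. Next, for $f_1$, I would invoke the Fourier--Whittaker expansion, analytic continuation, and functional equations of the maximal parabolic series $E^d(g,\Phi,\mu_1)$ proved in section \ref{sect:MaxPara}, together with the spectral decomposition of $L^2(SL(2,\Z)\backslash SL(2,\R))$ — which is precisely where the Duke--Friedlander--Iwaniec basis $\wtilde{\mathcal S}_k$ feeds in, supplemented by the constant function on $GL(2)$; the standard unfolding/Maass--Selberg argument shows that for fixed $d$ and $\Phi$ the inner integral over $\Re(\mu_1)=0$ produces the projection onto that Eisenstein packet, and summing over $d$ and over $\Phi\in\mathcal S_2^d$ (note $\mathcal S_2^d$ includes the $GL(2)$ constant $\sqrt{6/\pi}$, which accounts for the degenerate maximal-parabolic series appearing in the residual spectrum of section \ref{sect:ResSpec}) exhausts this part of the continuous spectrum. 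The factor $(2d+1)$ is the dimension of $\WigDMat d$, i.e. the multiplicity with which weight $d$ occurs, and $1/(2\pi i)$ is the usual contour normalization.

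For $f_0$ I would argue in the same way using section \ref{sect:MinPara}: the explicit Fourier--Whittaker expansion and the functional equations of $E^d(g,\mu)$ (the latter proved there via the constant terms, à la Langlands) give, by Maass--Selberg, that $\frac{(2d+1)}{(2\pi i)^2}\int_{\Re(\mu)=0}\Tr\bigl(E^d(g,\mu)\,\overline{\cdots}\bigr)d\mu$ is the projection onto the minimal-parabolic continuous spectrum in weight $d$, once one divides by the order $24$ of the Weyl group $W$ — this is the source of the $\tfrac1{24}$, correcting for the fact that $\mu$ and its $W$-translates parametrize the same subspace. Here one must be careful that the contour $\Re(\mu)=0$ is not a pole of $E^d(g,\mu)$, which holds because, as recalled in the abstract's discussion and proved in section \ref{sect:WhittFuncs}, the poles of the Whittaker Mellin transform (hence of the Eisenstein series) lie off the unitary axis; the poles that are crossed in the classical contour-shifting argument are exactly the ones producing $f_2$ and the $\Phi = \mathbf{1}$ term of $f_1$, which is the content of section \ref{sect:ResSpec} and the reason those pieces are \emph{subtracted} here rather than appearing as extra residue terms. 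Finally I would assemble: by Langlands' general spectral decomposition (whose statement, not proof, we use) $L^2(\Gamma\backslash G)$ is the orthogonal direct sum of $L^2_{\mathrm{cusp}}$, the minimal-parabolic continuous part, the maximal-parabolic continuous part, and the residual line; we have matched $f_0, f_1, f_2$ with the projections onto the last three, so $f - f_0 - f_1 - f_2$ is its projection onto $L^2_{\mathrm{cusp}}$, which is square-integrable and cuspidal. The main obstacle I anticipate is not the structure of the argument but pinning down every constant — the $\tfrac1{24}$, the $4/\zeta(3)$, the $\sqrt{6/\pi}$ versus $\sqrt{3/\pi}$, and the $(2d+1)$ weights — consistently with the normalizations of the matrix-valued Eisenstein series and of Haar measure used in the earlier sections; getting the Maass--Selberg inner-product computations to output exactly these factors, and checking that the residue calculus in $f_0$ yields precisely $f_2$ plus the degenerate term of $f_1$ with matching constants, is the delicate part.
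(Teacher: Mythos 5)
Your proposal takes a genuinely different route from the paper, and in doing so it defers the actual content of the theorem to steps it only gestures at. The paper does \emph{not} argue via orthogonal projections and completeness: its stated aim is to give a concrete proof ``in place of the general constructions of Langlands,'' and ``cuspidal'' is defined concretely as the vanishing of the two degenerate Fourier coefficients (the integrals over $U_{w_4}$ and $U_{w_5}$). Accordingly, the paper's proof is a direct constant-term computation. In section \ref{sect:111const} one expands the $1,1,1$ constant term of $f$ by Peter--Weyl in $k$ and Mellin inversion in $y$, recognizes the inner integral as a pairing against $E^d(g',\mu)$, shifts contours to $\Re(\mu)=0$, and uses the functional equations together with the orthogonality $\trans{\wbar{M^d(w,\mu)}}M^d(w,\mu)=\Sigmachi{d}{++}$ to fold the $|W|=6$ Weyl translates back into the constant term of a single Eisenstein series (this, together with $|V|=4$, is where the $\tfrac1{24}$ comes from --- it is not a division by the order of the Weyl group alone). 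The analogous unfolding to $P_{21}(\Z)\backslash G$, combined with the Duke--Friedlander--Iwaniec basis of weight-$m'$ Maass forms, shows $f-f_0-f_1$ has no $2,1$ constant term, and the involution $\iota$ handles the $1,2$ term. Section \ref{sect:ResSpec} then accounts for the residues crossed in the contour shift, which you correctly identify as producing $f_2$ and the $\Phi=\sqrt{6/\pi}$ member of $\mathcal{S}_2^d$. The advantage of the paper's route is that cuspidality of $f-f_0-f_1-f_2$ is \emph{read off} from the computation, with every constant forced; nothing about completeness of the spectral decomposition is needed or used.

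The gap in your version is that the central claim --- that the displayed formulas for $f_0$ and $f_1$, with exactly the constants $\tfrac{1}{24}$, $(2d+1)$, and the normalization $\sqrt{6/\pi}$, \emph{are} the orthogonal projections onto the corresponding continuous subspaces --- is precisely what has to be proved, and ``the standard unfolding/Maass--Selberg argument'' does not deliver it without substantial work: Maass--Selberg gives inner products of truncated Eisenstein series, and converting that into the Plancherel-type identity you need (including the matrix-valued bookkeeping over $K$-types and the Weyl-group overcounting) is the entire substance of sections \ref{sect:MinPara}--\ref{sect:ResSpec}. Relegating this to ``the delicate part'' you anticipate leaves the proof without its main step. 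Additionally, deducing ``cuspidal'' in the paper's sense from membership in the abstract cuspidal subspace requires importing the full Langlands decomposition as a black box, which is defensible as mathematics but is exactly the dependence the paper is constructed to avoid.
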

We say a function $f$ is cuspidal when its degenerate Fourier coefficients are all zero, that is, when the integrals
\[ \int_{U_{w_4}(\Z)\backslash U_{w_4}(\R)} f(ug) du, \qquad \int_{U_{w_5}(\Z)\backslash U_{w_5}(\R)} f(ug) du, \]
on the unipotent subgroups $U_w(\R)\subset G$, defined in section \ref{sect:GSC} below, are both zero.
Such an $f$ will fall in the span of the discrete spectrum, a.k.a. the Maass cusp forms.
Describing those forms will require knowledge of the raising and lowering operators on $GL(3)$, which will be the focus of the next paper.

A note on absolute convergence:
The first step in the spectral expansion is to expand $f$ over $K$-types $\WigDMat{d}$.
Since the entries of these matrices are eigenfunctions of the Laplacian on $SO(3,\R)$ (with supremum norm 1), we may assume the $d$ sum is finite, up to a negligible error (depending on the derivatives of $f$), by the usual harmonic analysis trickery.
Then all of the matrix-valued Eisenstein series are eigenfunctions of the Laplacian on $G$, so we may again assume their spectral parameters are bounded, applying the trivial bound on the supremum norm given in Proposition \ref{eq:FourierExpBound}.

It is interesting to determine the multiplicities of the (row) vector-valued Eisenstein series, and this can be done using the asymptotics of the Whittaker functions and explicit forms of the Fourier-Whittaker expansions.
For the minimal parabolic Eisenstein series and a maximal parabolic Eisenstein series attached to a spherical $GL(2)$ Maass form, the asymptotics of section \ref{sect:WhittAsymps} are given by invertible matrices, so the multiplicities can be read directly from the rank of the $\Sigmachi{d}{+\pm}$ matrix (see section \ref{sect:VGroup}) in the Fourier-Whittaker expansion:
The multiplicities are
\[ \piecewise{\frac{d}{2}+1&\If d \text{ is even},\\ \frac{d-1}{2}&\If d \text{ is odd},} \]
for the minimal parabolic or maximal parabolic attached to an even $GL(2)$ Maass form, and
\[ \piecewise{\frac{d}{2}&\If d \text{ is even},\\ \frac{d+1}{2}&\If d \text{ is odd},} \]
for a maximal parabolic attached to an odd $GL(2)$ Maass form.
A maximal parabolic Eisenstein series attached to an even weight $\kappa > 0$ modular form (as we have constructed it) is the same as for an even Maass form, except the middle $2\kappa-1$ rows are deleted, so the sequence becomes
\[ \piecewise{\frac{d-\kappa}{2}+1&\If d\ge\kappa \text{ is even},\\ \frac{d-\kappa+1}{2}&\If d>\kappa \text{ is odd}.} \]
The use of matrix-valued forms in this paper makes this computation a novelty, at least in terms of the spectral expansion.

\section{Background}
\subsection{Groups, spaces and characters}
\label{sect:GSC}
Let $G=PSL(3,\R) = GL(3,\R)/\R^\times$ and $\Gamma=SL(3,\Z)$.
In section \ref{sect:Iwasawa}, we will give explicitly the Iwasawa decomposition of $G=U(\R) Y^+ K$.
The groups involved in the decomposition are the orthogonal matrices $K=SO(3,\R)$, the unipotent matrices
\[ U(R) = \set{\Matrix{1&x_2&x_3\\&1&x_1\\&&1} \setdiv x_i\in R}, \qquad R \in \set{\R,\Q,\Z}, \]
(the missing entries to be interpreted as zero) and the diagonal matrices
\[ Y^+ = \set{\Matrix{y_1 y_2\\& y_1\\&&1} \in GL(3,\R) \setdiv y_1,y_2 > 0}. \]
The relationship between the indices of the $x_i$ and their location within the $x$-matrix (in $U$) will be fixed throughout the paper, similarly for the $y$ matrices and their coordinates.
The measure on the space $U(\R)$ is simply $dx := dx_1 \, dx_2 \, dx_3$, and the measure on $Y^+$ is
\[ dy := \frac{dy_1 \, dy_2}{(y_1 y_2)^3}. \]
The measure on $G$ is $dg :=dx \, dy \, dk$, where $dk$ is the Haar probability measure on $K$ described in section \ref{sect:KL2}.
We do not distinguish notationally between the matrix $y=\SmallMatrix{y_1 y_2\\& y_1\\&&1}$ and the pair $y=(y_1, y_2)$.
This will not cause a problem, as the multiplication is the same in both realizations.

The intersection of the diagonal and orthogonal matrices will frequently arise in computations; this is the group $V$ containing the four matrices
\[ \vpmpm{++}= I, \vpmpm{+-} = \Matrix{1\\&-1\\&&-1}, \vpmpm{-+} = \Matrix{-1\\&-1\\&&1}, \vpmpm{--} = \Matrix{-1\\&1\\&&-1}. \]

Characters of $U(\R)$ are given by
\[ \psi_m(x) = \psi_{m_1,m_2}(x) = \e{m_1 x_1+m_2 x_2}, \qquad \e{t} = e^{2\pi i t}, \]
where $m\in\R^2$; the symbol $\psi$ will generally denote such a character.
The $V$ group acts on these characters by $\psi_m^v(x) = \psi_m(vxv)$.
This gives the action $\psi_{m^v}(x)=\psi_m^v(x)$ on $\R^2$,
\[ m^{\vpmpm{++}} = m, \qquad m^{\vpmpm{-+}} = (m_1,-m_2), \qquad m^{\vpmpm{+-}} = (-m_1,m_2), \qquad m^{\vpmpm{--}} = (-m_1,-m_2). \]

Characters of $Y^+$ are given by the power function on $3\times 3$ diagonal matrices, defined by
\[ p_\mu\Matrix{a_1\\&a_2\\&&a_3} = \abs{a_1}^{\mu_1} \abs{a_2}^{\mu_2} \abs{a_3}^{\mu_3}, \]
where $\mu\in\C^3$.
We assume $\mu_1+\mu_2+\mu_3=0$ so this is defined modulo $\R^\times$, renormalize by $\rho=(1,0,-1)$, and extend by the Iwasawa decomposition
\[ p_{\rho+\mu}\paren{r x y k} = y_1^{1-\mu_3} y_2^{1+\mu_1}, \qquad r\in\R^\times,x\in U(\R),y\in Y^+,k\in K. \]
When we integrate over the space of such $\mu$, we will use the standard measure
\[ d\mu = d\mu_1 \, d\mu_2 = d\mu_2 \, d\mu_3 = d\mu_1 \, d\mu_3. \]

We will also make reference to the Bruhat decomposition of $\Gamma \subset G$.
This decomposition has the form $G=U(\R) Y^+ V W U(\R)$, where $W$ is the Weyl group of $G$ containining the six matrices
\begin{equation*}
	\begin{array}{rclcrclcrcl}
		I &=& \Matrix{1\\&1\\&&1}, && w_2 &=& -\Matrix{&1\\1\\&&1}, && w_3 &=& -\Matrix{1\\&&1\\&1}, \\
		w_4 &=& \Matrix{&1\\&&1\\1}, && w_5 &=& \Matrix{&&1\\1\\&1}, && w_l &=& -\Matrix{&&1\\&1\\1}.
	\end{array}
\end{equation*}
Note that this matches Bump's book \cite{Bump01}, but has $w_2$ and $w_3$ interchanged compared to Goldfeld's book \cite{Gold01}.
When taking the Bruhat decomposition of an element $\gamma\in\Gamma$, we have $\gamma=bcvwb'$ with $b,b'\in U(\Q)$, $v\in V$, $w\in W$ and $c$ of the form
\[ \Matrix{\frac{1}{c_2}\\&\frac{c_2}{c_1}\\&&c_1}, \qquad c_1, c_2\in \N. \]

The space $U$ decomposes into subspaces according to the action of the Weyl group, and we set $\wbar{U}_w = (w^{-1} \, \trans{U} \, w) \cap U$ for $w\in W$.
These are
\begin{align}
\label{eq:ExplicitUw}
\wbar{U}_I = \set{I}, \qquad \wbar{U}_{w_2} = \set{\Matrix{1&x_2&0\\&1&0\\&&1}}, \qquad \wbar{U}_{w_3} = \set{\Matrix{1&0&0\\&1&x_1\\&&1}} \\
\wbar{U}_{w_4} = \set{\Matrix{1&x_2&x_3\\&1&0\\&&1}}, \qquad \wbar{U}_{w_5} = \set{\Matrix{1&0&x_3\\&1&x_1\\&&1}}, \qquad \wbar{U}_{w_l} = U. \nonumber
\end{align}
The complementary spaces are $U_w = (w^{-1} \, U \, w) \cap U$.
The Bruhat decomposition of $\gamma \in \Gamma$ above becomes unique if we enforce $b'\in \wbar{U}_w(\Q)$.

The Weyl group also defines a right action on the coordinates of $\mu$ by permuting the entries of a diagonal matrix,
\[ p_{\mu^w}(a)=p_\mu\paren{waw^{-1}}. \]
When applying subscripts, we will use the convention $\mu^w_1 = (\mu^w)_1$.
Because it arises so frequently, we give a table of this action:
\begin{align*}
	\mu^I =& \paren{\mu_1,\mu_2,\mu_3}, & \mu^{w_2} =& \paren{\mu_2,\mu_1,\mu_3}, & \mu^{w_3} =& \paren{\mu_1,\mu_3,\mu_2}, \\
	\mu^{w_4} =& \paren{\mu_3,\mu_1,\mu_2}, & \mu^{w_5} =& \paren{\mu_2,\mu_3,\mu_1}, & \mu^{w_l} =& \paren{\mu_3,\mu_2,\mu_1}.
\end{align*}

The Fourier coefficients (or Hecke eigenvalues) of the $GL(3)$ Eisenstein series are most easily described in terms of the Schur polynomials
\begin{align}
\label{eq:SchurDef}
	S_{n_1,n_2}(\alpha,\beta) :=& \frac{\det \Matrix{1&\beta^{n_1+n_2+2}&\alpha^{n_1+n_2+2}\\1&\beta^{n_1+1}&\alpha^{n_1+1}\\1&1&1}}{\det \Matrix{1&\beta^2&\alpha^2\\1&\beta&\alpha\\1&1&1}}.
\end{align}

Lastly, we will require the Pl\"ucker coordinates $A_i, B_i, C_i$ on elements $\gamma\in\Gamma$ defined by
\[ \gamma=\Matrix{*&*&*\\d&e&f\\A_1&B_1&C_1}\Rightarrow \begin{array}{c} A_2=B_1d-A_1e,\\B_2=A_1f-C_1d,\\C_2=C_1e-B_1f\end{array} \]
satisfying $A_1 C_2+B_1 B_2+C_1 A_2=0$, and
\[ (A_1, B_1, C_1)=(A_2,B_2,C_2)=1. \]
These six integers uniquely determine cosets in $U(\Z)\backslash\Gamma$.

\subsection{Representations of $SO(3,\R)$}
\label{sect:KReps}
Up to isomorphism, the representations of $K$ are given by the Wigner $\WigDName$-matrices, in the physics terminology.
These are frequently deduced by the isomorphism of the Lie algebras of $SO(3)$ and $SU(2)$, but we will not be concerned with this construction.
A good reference for the representation-theoretic description of Wigner $\WigDName$-matrices is \cite{Kn01} (this can also be found in \cite{ManIshOda} and \cite{Miya01}), and for the analytic description, \cite{BL01}.

Say $k = k(\alpha,\beta,\gamma) \in K$ is described in terms of the $Z$-$Y$-$Z$ Euler angles,
\begin{align}
\label{eq:Kparam}
	k =& \Matrix{\cos\alpha&-\sin\alpha&0\\ \sin\alpha&\cos\alpha&0\\ 0&0&1} \Matrix{\cos\beta&0&\sin\beta\\0&1&0\\-\sin\beta&0&\cos\beta} \Matrix{\cos\gamma&-\sin\gamma&0\\ \sin\gamma&\cos\gamma&0\\0&0&1},
\end{align}
with $0 \le \alpha,\gamma < 2\pi$, $0 \le \beta \le \pi$, then we write the Wigner $\WigDName$-matrix of dimension $2d+1$ with entry at the $(d+1+m')$-th row and $(d+1+m)$-th column
\begin{align}
\label{eq:FirstWigD}
	\WigD{d}{m'}{m}(k) = e^{-im'\alpha} \Wigd{d}{m'}{m}(\cos\beta) e^{-im\gamma}.
\end{align}
To avoid the ever-present $\arccos$, we have departed slightly from the standard notation $\Wigd{d}{m'}{m}(\beta)$, in which $\Wigd{d}{m'}{m}$ is a trigonometric polynomial in $\beta/2$ called the Wigner $\WigdName$-polynomial.
The matrix and associated vectors are generally indexed from the center entry so that, e.g. the matrix indices match the indices of the characters of the two outer copies of $SO(2,\R)$ in the $Z$-$Y$-$Z$ Euler angle description.

It is frequently useful to rewrite the parameterization $k(\alpha,\beta,\gamma)$ of $K$ given in \eqref{eq:Kparam} as 
\begin{align}
\label{eq:Kaltparam}
	\tilde{k}(\exp i\alpha, \exp i\beta, \exp i\gamma) = k(\alpha,\beta,\gamma),
\end{align}
keeping in mind the relevant ranges of the angles.
In this notation, the inverse mapping becomes
\begin{align*}
	\Matrix{g&h&j\\d&e&f\\a&b&c} = \tildek{\frac{j+if}{\sqrt{j^2+f^2}},\frac{c+i\sqrt{a^2+b^2}}{\sqrt{a^2+b^2+c^2}},\frac{-a+ib}{\sqrt{a^2+b^2}}},
\end{align*}
unless $a=b=0$.
In that case, necessarily $j=f=0$, and we may take
\begin{align*}
	\Matrix{g&h&0\\d&e&0\\0&0&\pm 1} = \tildek{\frac{e\pm id}{\sqrt{e^2+d^2}},\pm 1,1}.
\end{align*}

The Wigner $\WigDName$-matrices give representations
\[ \WigDMat{d}:K\to SL(2d+1,\C), \quad \WigDMat{d}(kk')=\WigDMat{d}(k) \WigDMat{d}(k'), \quad \WigDMat{d}\paren{k^{-1}} = \WigDMat{d}(k)^{-1} = \wbar{\trans{\WigDMat{d}(k)}}, \]
and the collection of such representations for all integral $d \ge 0$ exhausts the isomorphism classes for representations of $K$.
A particularly useful trick for avoiding the Wigner-$\WigdName$ polynomials, when one knows the Euler angles, is to write $k(\alpha,\beta,\gamma)$ as
\[ k(\alpha,0,0) k(0,\beta,0) k(0,0,\gamma) = k(\alpha,0,0) w_3 k(-\beta,0,0) w_3 k(\gamma,0,0), \]
so that
\begin{align*}
	\WigDMat{d}(k(\alpha,\beta,\gamma))=\WigDMat{d}(k(\alpha,0,0)) \WigDMat{d}(w_3) \WigDMat{d}(k(-\beta,0,0)) \WigDMat{d}(w_3) \WigDMat{d}(k(\gamma,0,0)).
\end{align*}
Since it arises often, we will abbreviate $\Dtildek{d}{s} = \WigDMat{d}\paren{\tildek{s,1,1}}$.
Then from \eqref{eq:FirstWigD}, this is
\begin{align}
\label{eq:tildeDdiag}
	\Dtildek{d}{s} = \diag\paren{s^d, \ldots, 1, \ldots, s^{-d}},
\end{align}
i.e. $\WigD{d}{m}{m}(\tildek{s,1,1})=s^{-m}$.
In this notation, the trick becomes
\begin{align}
\label{eq:w3Trick}
	\WigDMat{d}(\tildek{s,t,u})=\Dtildek{d}{s} \WigDMat{d}(w_3) \Dtildek{d}{\wbar{t}} \WigDMat{d}(w_3) \Dtildek{d}{u}.
\end{align}
This immediately implies a representation of the Wigner $\WigdName$-polynomials,
\begin{align}
\label{eq:WigdExplicit}
	\Wigd{d}{m'}{m}(\Re(t)) = \WigD{d}{m'}{m}(\tildek{1,t,1})=\sum_{j=-d}^d \WigD{d}{m'}{j}(w_3) \WigD{d}{j}{m}(w_3) t^j, \quad \abs{t}=1, 0 \le \arg(t) \le \pi,
\end{align}
and a trivial corollary is
\[ \pderv[n_1]{}{s} \pderv[n_2]{}{t} \pderv[n_3]{}{u} \WigD{d}{m'}{m}(\tildek{s,t,u}) \ll d^{1+n_1+n_2+n_3}, \]
since $\abs{\WigD{d}{m'}{m}(w_3)} \le 1$.
The extra $d^1$ here is not optimal.
We will also occasionally need the dual representation
\begin{align}
\label{eq:dualWignerD}
	\dualWigDMat{d}(k) = \WigDMat{d}(w_l k w_l).
\end{align}

\subsubsection{The $L^2$-space on $K$}
\label{sect:KL2}
The Haar probability measure on $K$, in terms of the $Z$-$Y$-$Z$ Euler angles, is given by
\[ dk = \frac{1}{8\pi^2} \sin\beta \, d\alpha \, d\beta \, d\gamma, \]
and by the Peter-Weyl theorem, the entries of the Wigner $\WigDName$-matrices give a basis for the $L^2$-space:
If $f\in L^2(K)$, then
\begin{align}
\label{eq:Kexpand}
	f(k) =& \sum_{d\ge0} (2d+1) \sum_{\abs{m'},\abs{m} \le d} \what{f}(d)_{m,m'} \WigD{d}{m'}{m}(k) \\
	=& \sum_{d\ge0} (2d+1) \sum_{\abs{m} \le d} \what{f}(d)_{m,\cdot} \WigD{d}{\cdot}{m}(k) \nonumber \\
	=& \sum_{d\ge0} (2d+1) \Tr\paren{\what{f}(d) \WigDMat{d}(k)}, \nonumber
\end{align}
where
\[ \what{f}(d) = \int_K f(k) \wbar{\trans{\WigDMat{d}(k)}} dk, \]
again indexing from the center element.

\subsubsection{The $V$ group}
\label{sect:VGroup}
The $V$ matrices may be written (not uniquely) as
\begin{align*}
\begin{array}{rlcrlcrlcrl}
	k(0,0,0) =& I, && k(\pi,0,0) =& \vpmpm{-+}, && k(0,\pi,0) =& \vpmpm{--}, && k(\pi,\pi,0) =& \vpmpm{+-}.
\end{array}
\end{align*}

When $\beta=0$, the Wigner-$\WigdName$ polynomial is zero unless $m=m'$, and in that case
\[ \Wigd{d}{m'}{m'}(\cos 0) = 1. \]
When $\beta=\pi$, the Wigner-$\WigdName$ polynomial is zero unless $m=-m'$, and in that case
\[ \Wigd{d}{m'}{-m'}(\cos \pi) = (-1)^{d-m'}. \]
Putting these together,
\begin{align*}
	\WigDMat{d}(I) =& I, & \WigDMat{d}(\vpmpm{-+}) =& \Matrix{(-1)^d\\&\ddots\\&&(-1)^0\\&&&\ddots\\&&&&(-1)^d}, \\
	\WigDMat{d}(\vpmpm{+-}) =& (-1)^d \Matrix{&&&&1\\&&&\revdots\\&&1\\&\revdots\\1}, & \WigDMat{d}(\vpmpm{--}) =& \Matrix{&&&&(-1)^0\\&&&\revdots\\&&(-1)^d\\&\revdots\\(-1)^0}. \\
\end{align*}

A character $\chi$ of $V$ is determined by its values on $\vpmpm{+-}$ and $\vpmpm{-+}$.
The group of all such characters is then
\[ \set{\chi_{++}, \chi_{+-}, \chi_{-+}, \chi_{--}}, \]
where $\chi_{\pm\cdot}(\vpmpm{-+})=\pm1$ and $\chi_{\cdot\pm}(\vpmpm{+-})=\pm1$.
Define
\[ \Sigma^d_\chi := \frac{1}{\abs{V}} \sum_{v\in V} \chi(v) \WigDMat{d}(v), \]
and we use the short-hand $\Sigmachi{d}{\pm\pm} = \Sigmachi{d}{\chi_{\pm\pm}}$.
These matrices are projection operators, and hence satisfy the handy property $\Sigma^d_\chi = \Sigma^d_\chi\Sigma^d_\chi$.

The matrix $\Sigmachi{d}{++}$ occurs in the Fourier-Whittaker expansion of the minimal parabolic Eisenstein series.
For the maximal parabolic Eisenstein series, we will encounter a parity, which mimics a character of the $V$ group, so these will have either $\Sigmachi{d}{++}$ or $\Sigmachi{d}{+-}$ in their Fourier-Whittaker expansion.

These sums take the values
\begin{align}
\label{eq:Sigmadplusplus}
	\Sigmachi{2d}{++} = \Matrix{\ddots &&&&&& \revdots \\ &\frac{1}{2}&&&&\frac{1}{2} \\ &&0&&0 \\ &&&1 \\ &&0&&0 \\ &\frac{1}{2}&&&&\frac{1}{2} \\ \revdots &&&&&& \ddots}, \qquad \Sigmachi{2d+1}{++} = \Matrix{\ddots &&&&&& \revdots \\ &\frac{1}{2}&&&&-\frac{1}{2} \\ &&0&&0 \\ &&&0 \\ &&0&&0 \\ &-\frac{1}{2}&&&&\frac{1}{2} \\ \revdots &&&&&& \ddots},
\end{align}
\begin{align}
\label{eq:Sigmadplusminus}
	\Sigmachi{2d}{+-} = \Matrix{\ddots &&&&&& \revdots \\ &\frac{1}{2}&&&&-\frac{1}{2} \\ &&0&&0 \\ &&&0 \\ &&0&&0 \\ &-\frac{1}{2}&&&&\frac{1}{2} \\ \revdots &&&&&& \ddots}, \qquad \Sigmachi{2d+1}{+-} = \Matrix{\ddots &&&&&& \revdots \\ &\frac{1}{2}&&&&\frac{1}{2} \\ &&0&&0 \\ &&&1 \\ &&0&&0 \\ &\frac{1}{2}&&&&\frac{1}{2} \\ \revdots &&&&&& \ddots},
\end{align}
\begin{align}
\label{eq:Sigmadminusplus}
	\Sigmachi{2d}{-+} = \Matrix{\ddots &&&&&& \revdots \\ &0&&&&0 \\ &&\frac{1}{2}&&\frac{1}{2}\\ &&&0 \\ &&\frac{1}{2}&&\frac{1}{2} \\ &0&&&&0 \\ \revdots &&&&&& \ddots}, \qquad \Sigmachi{2d+1}{-+} = \Matrix{\ddots &&&&&& \revdots \\ &0&&&&0 \\ &&\frac{1}{2}&&-\frac{1}{2}\\ &&&0 \\ &&-\frac{1}{2}&&\frac{1}{2} \\ &0&&&&0 \\ \revdots &&&&&& \ddots},
\end{align}
\begin{align}
\label{eq:Sigmadminusminus}
	\Sigmachi{2d}{--} = \Matrix{\ddots &&&&&& \revdots \\ &0&&&&0 \\ &&\frac{1}{2}&&-\frac{1}{2}\\ &&&0 \\ &&-\frac{1}{2}&&\frac{1}{2} \\ &0&&&&0 \\ \revdots &&&&&& \ddots}, \qquad \Sigmachi{2d+1}{--} = \Matrix{\ddots &&&&&& \revdots \\ &0&&&&0 \\ &&\frac{1}{2}&&\frac{1}{2}\\ &&&0 \\ &&\frac{1}{2}&&\frac{1}{2} \\ &0&&&&0 \\ \revdots &&&&&& \ddots}.
\end{align}

The two relations that will appear are
\begin{align}
\label{eq:SigmadRels}
	\WigDMat{d}(w_l) \Sigmachi{d}{-+} \WigDMat{d}(w_l) = \Sigmachi{d}{+-}, \qquad \WigDMat{d}(w_3) \Sigmachi{d}{-+} \WigDMat{d}(w_3) =\Sigmachi{d}{-+}.
\end{align}

\subsubsection{The Weyl group}
The Weyl elements may be written in $Z$-$Y$-$Z$ coordinates as
\begin{align*}
\begin{array}{ccccc}
	k(\tfrac{\pi}{2},\pi,0) = w_2, & k(\tfrac{3\pi}{2},\tfrac{\pi}{2},\tfrac{3\pi}{2}) = w_3, & k(\tfrac{\pi}{2},\tfrac{\pi}{2},\pi) = w_4, & k(0,\tfrac{\pi}{2},\tfrac{\pi}{2}) = w_5, & k(\pi,\tfrac{\pi}{2},0) = w_l.
\end{array}
\end{align*}

\subsection{Two classical integrals}
\subsubsection{The classical Whittaker function}
For $m\in \Z$, $\Re(u)>0$ and $y\in\R$, consider the absolutely convergent integral
\begin{align}
\label{eq:WintOld}
	\mathcal{W}_m(y, u) =& \int_{-\infty}^\infty \paren{1+x^2}^{-\frac{1+u}{2}} \paren{\frac{1+ix}{\sqrt{1+x^2}}}^{-m} \e{-y x} dx.
\end{align}
This is the integral $I_s(n,y)$ from \cite[Section 7]{DFI01}, and it may be evaluated explicitly using \cite[3.384.9]{GradRyzh} for $y \ne 0$ and \cite[8.381.1]{GradRyzh} for $y=0$, giving
\begin{align}
\label{eq:Weval}
	\mathcal{W}_m(y, u) =& (-1)^m \frac{(\pi\abs{y})^{\frac{1+u}{2}}}{\abs{y} \Gamma\paren{\frac{1-\varepsilon m+u}{2}}} W_{-\frac{\varepsilon m}{2}, \frac{u}{2}}(4\pi\abs{y}), \qquad y \ne 0 \\
\label{eq:Beval}
	\mathcal{W}_m(0,u) =& \frac{2^{1-u} \pi \Gamma\paren{u}}{\Gamma\paren{\frac{1+u+m}{2}} \Gamma\paren{\frac{1+u-m}{2}}} = \pi^{\frac{1}{2}} \frac{\Gamma\paren{\frac{u}{2}} \Gamma\paren{\frac{1+\abs{m}-u}{2}}}{\Gamma\paren{\frac{1-u}{2}} \Gamma\paren{\frac{1+\abs{m}+u}{2}}} \frac{\sin\paren{\pi \frac{1+u-\abs{m}}{2}}}{\sin\paren{\pi\frac{1+u}{2}}},
\end{align}
where $\varepsilon = \sgn(y)$ and $W_{\alpha,\beta}(y)$ is the classical Whittaker function.
Note that one should apply the formulae in Gradshteyn \& Ryshik for $u$ real, and the principal value of the power function, and extend to $\Re(u) > 0$ by analytic continuation.
We collect these terms into the matrix $\mathcal{W}^d(y,u)$ having diagonal entries $\mathcal{W}^d_{m,m}(y,u) = \mathcal{W}_m(y,u)$, indexing from the center of the matrix, as usual.

Note that the functional equation of the Whittaker function $W_{\alpha,-\beta}(y)=W_{\alpha,\beta}(y)$ implies
\begin{align}
\label{eq:ClassWhittFE}
	\mathcal{W}^d(y,-u) =& (\pi\abs{y})^{-u} \Gamma_\mathcal{W}^d(u,\varepsilon) \mathcal{W}^d(y,u),
\end{align}
where $\Gamma_\mathcal{W}^d(u,\varepsilon)$ is the diagonal matrix with entries
\begin{align}
\label{eq:WhittGammas}
	\Gamma_{\mathcal{W},m,m}^d(u,\varepsilon) = \frac{\Gamma\paren{\frac{1-\varepsilon m+u}{2}}}{\Gamma\paren{\frac{1-\varepsilon m-u}{2}}} = (-\varepsilon\sgn(m))^m \frac{\Gamma\paren{\frac{1+\abs{m}+u}{2}}}{\Gamma\paren{\frac{1+\abs{m}-u}{2}}}.
\end{align}
The second equality follows by the reflection formula; it will be important that the leading sign disappears for even $m$.
Similarly, we notice on the even entries the $y=0$ case simplifies to
\begin{align}
\label{eq:WevenRows}
	\mathcal{W}_{2m}(0,u) =& (-1)^m \pi^{\frac{1}{2}} \frac{\Gamma\paren{\frac{u}{2}} \Gamma\paren{\frac{1-u}{2}+\abs{m}}}{\Gamma\paren{\frac{1-u}{2}} \Gamma\paren{\frac{1+u}{2}+\abs{m}}}.
\end{align}

Using the fact \eqref{eq:tildeDdiag}, the matrix for the classical integral becomes
\begin{align}
\label{eq:Wint}
	\mathcal{W}^d(y, u) =& \int_{-\infty}^\infty \paren{1+x^2}^{-\frac{1+u}{2}} \Dtildek{d}{\frac{1+ix}{\sqrt{1+x^2}}} \e{-y x} dx.
\end{align}
Notice the commutativity of $SO(2,\R)$ gives commutativity of the matrices
\begin{align}
\label{eq:WDtildek}
	\mathcal{W}^d(y, u) \Dtildek{d}{s} = \Dtildek{d}{s} \mathcal{W}^d(y, u).
\end{align}

Sending $x\mapsto -x$ in the original \eqref{eq:WintOld} gives $\mathcal{W}_{-m}(y,u) = \mathcal{W}_m(-y,u)$, so we also have the symmetry
\begin{align}
\label{eq:Wvpm}
	\mathcal{W}^d(-y, u) = \WigDMat{d}(\vpmpm{+-})\mathcal{W}^d(y, u)\WigDMat{d}(\vpmpm{+-}).
\end{align}
In case $y=0$, this translates to commutivity
\begin{align}
\label{eq:W0v}
	\mathcal{W}^d(0, u)\WigDMat{d}(v) = \WigDMat{d}(v)\mathcal{W}^d(0, u),
\end{align}
for all $v\in V$.

\subsubsection{A generalized beta function}
For $\varepsilon \in \set{\pm 1}$, $m\in\Z$, $\Re(a) > 0$ and $\Re(b) > 0$, consider the integral
\begin{align}
\label{eq:BetaFunDef}
	\mathcal{B}_{\varepsilon, m}(a,b) =& \int_0^\infty x^{a-1} (1+x^2)^{-\frac{b+a}{2}} \paren{\paren{\frac{1+ix}{\sqrt{1+x^2}}}^{-m}+\varepsilon\paren{\frac{1-ix}{\sqrt{1+x^2}}}^{-m}} dx.
\end{align}
Since the quantities in the $m$-th powers have modulus one, we have
\[ \mathcal{B}_{\varepsilon, m}(a,b) = \varepsilon \mathcal{B}_{\varepsilon, -m}(a,b), \]
and we may assume $m \ge 0$.
It is worth pointing out that $\mathcal{B}_{\varepsilon, m}(a,b)$ is essentially the Mellin transform of $\mathcal{W}_m(y, u)$.

By expanding the $m$-th powers, and applying
\begin{align}
\label{eq:ClassBetaFun}
	\int_0^\infty x^a (1+x^2)^b dx = \frac{1}{2} B\paren{\frac{1+a}{2}, \frac{-1-a-2b}{2}},
\end{align}
where $B(u,v)=\frac{\Gamma(u)\Gamma(v)}{\Gamma(u+v)}$ is the Euler beta function, we arrive at
\begin{align}
\label{eq:BexplicitEvalEven}
	\mathcal{B}_{1, m}(a,b) =& \sum_{j=0}^{m/2} \binom{m}{2j} (-1)^j B\paren{\frac{a}{2}+j, \frac{m+b}{2}-j}, \\
\label{eq:BexplicitEvalOdd}
	\mathcal{B}_{-1, m}(a,b) =& i \sum_{j=0}^{(m-1)/2} \binom{m}{2j+1} (-1)^j B\paren{\frac{1+a}{2}+j, \frac{m-1+b}{2}-j},
\end{align}
for $m \ge 0$.

Now if $m \equiv \delta \pmod{2}$ with $\delta\in\set{0,1}$ we note that
\begin{align}
\label{eq:BfactorEven}
	\mathcal{B}_{1, m}(a,b) =& \wtilde{\mathcal{B}}_{1, m}(a,b) \frac{\Gamma\paren{\frac{a}{2}}\Gamma\paren{\frac{\delta+b}{2}}}{\Gamma\paren{\frac{m+a+b}{2}}}, \\
\label{eq:BfactorOdd}
	\mathcal{B}_{-1, m}(a,b) =& \wtilde{\mathcal{B}}_{-1, m}(a,b) \frac{\Gamma\paren{\frac{1+a}{2}}\Gamma\paren{\frac{1-\delta+b}{2}}}{\Gamma\paren{\frac{m+a+b}{2}}},
\end{align}
where $\wtilde{\mathcal{B}}_{\varepsilon, m}(a,b)$ is a polynomial in $a,b$.
Hence we may say $\mathcal{B}_{\varepsilon, m}(a,b)$ has meromorphic continuation to all $a,b\in\C$ with at most simple poles coming from the gamma functions in the numerator and at least simple zeros coming from the gamma function in the denominator, assuming there is no overlap between the sets of poles coming from any pair of the gamma functions.

As with the previous integral, we collect these terms into the matrices $\mathcal{B}^d_\varepsilon(a,b)$ and $\wtilde{\mathcal{B}}_\varepsilon^d(a,b)$ having diagonal entries, e.g. $\mathcal{B}^d_{\varepsilon,m,m}(a,b) = \mathcal{B}^d_{\varepsilon,m}(a,b)$, indexing from the center of the matrix, as usual.
In the matrix notation, this is
\begin{align}
\label{eq:Bint}
	\mathcal{B}^d_\varepsilon(a,b) =& \int_0^\infty x^{a-1} (1+x^2)^{-\frac{b+a}{2}} \paren{\Dtildek{d}{\frac{1+ix}{\sqrt{1+x^2}}}+\varepsilon \Dtildek{d}{\frac{1-ix}{\sqrt{1+x^2}}}} dx.
\end{align}

We will also need a very weak bound showing the exponential decay in $\Im(a)$ and $\Im(b)$.
If $\abs{\Re(a)},\abs{\Re(b)} \le T$, then
\begin{align*}
	\abs{\wtilde{\mathcal{B}}_{1, m}(a,b)} \le& \sum_{j=0}^{m/2} \binom{m}{2j} \paren{\frac{m+T+\abs{\Im(a)}}{2}}^j \paren{\frac{m+T+\abs{\Im(b)}}{2}}^{\floor{m/2}-j} \\
	\le& \paren{\sqrt{\frac{m+T+\abs{\Im(a)}}{2}}+\sqrt{\frac{m+T+\abs{\Im(b)}}{2}}}^m,
\end{align*}
and the same holds for $\wtilde{\mathcal{B}}_{-1, -m}(a,b)$.
For bounded values of the parameters $m$ and $T$, we may write this as
\begin{align}
\label{eq:BtildeBound}
	\abs{\wtilde{\mathcal{B}}_{\varepsilon, m}(a,b)} \ll_{m,T}& \paren{1+\abs{\Im(a)}+\abs{\Im(b)}}^{\abs{m}/2}.
\end{align}
Then Stirling's formula applied to \eqref{eq:BfactorEven} and \eqref{eq:BfactorOdd} gives exponential decay in the region $\abs{\Im(a-b)} > \abs{\Im(a+b)}$.

\subsection{The full Iwasawa decomposition}
\label{sect:Iwasawa}
The Iwasawa decomposition for $g\in GL(3,\R)$ is $g=rxyk$ with $r\in\R^\times$,$x \in U(\R)$, $y\in Y^+$, and $k\in K$.
This can be thought of as an accounting of the Gram-Schmidt procedure.
Say
\begin{align}
\label{eq:FullIwasawa}
	A := \Matrix{g&h&j\\d&e&f\\a&b&c} =& r \Matrix{1&x_2&x_3\\&1&x_1\\&&1} \Matrix{y_1 y_2\\&y_1\\&&1} k,
\end{align}
\begin{align*}
	\xi_1 = (a,b,c), \qquad \xi_2 = (bd-ae, cd-af, ce-bf), \qquad \xi_3 = (bg-ah,cg-aj,ch-bj),
\end{align*}
with $\det A > 0$, then
\begin{align*}
	x_1 = \frac{ad+be+cf}{\norm{\xi_1}^2}, \qquad x_2 = \frac{\xi_2\cdot\xi_3}{\norm{\xi_2}^2}, \qquad x_3 = \frac{ag+bh+cj}{\norm{\xi_1}^2},
\end{align*}
\begin{align*}
	r = \norm{\xi_1}, \qquad y_1 = \frac{\norm{\xi_2}}{\norm{\xi_1}^2}, \qquad y_2 = \det A \frac{\norm{\xi_1}}{\norm{\xi_2}^2},
\end{align*}
and
\begin{align*}
	k = \Matrix{\frac{ce-bf}{\norm{\xi_2}}&\frac{af-cd}{\norm{\xi_2}}&\frac{bd-ae}{\norm{\xi_2}}\\[5pt] \frac{b(bd-ae)+c(cd-af)}{\norm{\xi_1}\norm{\xi_2}} & \frac{a(ae-bd)+c(ce-bf)}{\norm{\xi_1}\norm{\xi_2}} & \frac{a}{\norm{\xi_1}\norm{\xi_2}}\\[5pt] \frac{a(af-cd)+b(bf-ce)}{\norm{\xi_1}}&\frac{b}{\norm{\xi_1}}&\frac{c}{\norm{\xi_1}}} = \tilde{k}(s,t,u),
\end{align*}
with
\begin{align*}
	s = \frac{(bd-ae)\norm{\xi_1}+i(a(af-cd)+b(bf-ce))}{\sqrt{a^2+b^2}\norm{\xi_2}}, \qquad t = \frac{c+i\sqrt{a^2+b^2}}{\norm{\xi_1}}, \qquad u = \frac{-a+ib}{\sqrt{a^2+b^2}}.
\end{align*}

In case $a=b=0$, we may use
\begin{align*}
	s = \frac{e+id \sgn(c)}{\sqrt{d^2+e^2}}, \qquad t = \sgn(c), \qquad u = 1,
\end{align*}
though there are other choices.

For $g\in G$, we generally write $g=xyk$, taking $r=1$ as we may.

\section{The Whittaker functions}
\label{sect:WhittFuncs}
Let $I^d(\cdot, \mu)$ be the matrix-valued function on $G$ defined by
\begin{align}
\label{eq:IdDef}
	I^d(xyk, \mu) = p_{\rho+\mu}(y) \WigDMat{d}(k).
\end{align}
It can be shown that $I^d(\cdot, \mu)$ is an eigenfunction of the center of the Lie algebra of $G$.
The Jacquet-Whittaker function may be defined as an integral of this elementary function,
\begin{align}
\label{eq:LEWhittDef}
	W^d(g, \mu, \psi_n) =& \int_{U(\R)} I^d(w_l u g, \mu) \wbar{\psi_n(u)} du.
\end{align}
Using again the fact that $\abs{\WigD{d}{m'}{m}(k)} \le 1$, it is trivial to show absolute convergence on $\Re(\mu_1)>\Re(\mu_2)>\Re(\mu_3)$.
The previous definition may be considered a special case of the Jacquet-Whittaker function at a general Weyl element,
\begin{align}
\label{eq:WhittDef}
	W^d(g, w, \mu, \psi_n) =& \int_{\wbar{U}_w(\R)} I^d(w u g, \mu) \wbar{\psi_n(u)} du.
\end{align}
In addition to the computations of Bump \cite{Bump01} and others in the spherical case, the papers \cite{Miya01} and \cite{ManIshOda} give explicit forms of these functions in the minimal weight cases, which are the spherical, i.e. $d=0$, and $d=1$ cases.
(Miyazaki's results apply to the minimal $K$-types of the maximal parabolic Eisenstein series as well, but again, this will not assist in the analysis of the spectral expansion.)
The computations of this section generalize their results to all $d$.

We isolate the dependence on the components of the Iwasawa decomposition as follows:
First, by construction, we have
\begin{align}
\label{eq:WhittKFE}
	W^d(xyk, w, \mu, \psi_n) =& W^d(xy, w, \mu, \psi_n) \WigDMat{d}(k),
\end{align}
then a suitable substitution on $u$ gives
\begin{align}
\label{eq:WhittUFE}
	W^d(xy, w, \mu, \psi_n) =& \psi_{\what{n}}(x) W^d(y, w, \mu, \psi_n), \qquad \what{n} = \piecewise{
(0,0) & \If w=I, \\
(0,n_2) & \If w = w_2, w_4, \\
(n_1,0) & \If w = w_3, w_5, \\
n & \If w = w_l,}
\end{align}
and conjugating $uy \mapsto yu$ gives
\begin{align}
\label{eq:WhittYFE}
	W^d(y, w, \mu, \psi_n) =& p_{\rho+\mu^w}(y) W^d(I, w, \mu, \psi_{y n}), \\
\label{eq:WhittNFE}
	W^d(y, w, \mu, \psi_n) =& p_{-\rho-\mu^w}(\wtilde{n}) W^d(\wtilde{n} y, w, \mu, \psi_{\sgn(n)}),
\end{align}
where $\wtilde{n}$ is the $y$-matrix having coordinates $\wtilde{n}_i = \abs{n_i}$ if $n_i \ne 0$ and 1 otherwise.

The dependence on the signs of the character is given by
\begin{align}
\label{eq:WhittVFE}
	W^d(y, w, \mu, \psi_n^v) = \WigDMat{d}(wvw^{-1}) W^d(y, w, \mu, \psi_n) \WigDMat{d}(v),
\end{align}
or more generally,
\begin{align}
\label{eq:WhittVFE2}
	W^d(g, w, \mu, \psi_n^v) = \WigDMat{d}(wvw^{-1}) W^d(vg, w, \mu, \psi_n).
\end{align}
This can be seen by applying the Iwasawa decomposition to $(wuy)$ in
\[ wvuvy=(wvw^{-1}) (wuy) v. \]

\subsection{The degenerate Whittaker functions}
The cases of \eqref{eq:WhittDef} involving $w \ne w_l$ are called the degenerate Whittaker functions, and we may compute explicitly,
\begin{align}
\label{eq:DegenWhitt}
	& W^d(I, w, \mu, \psi_n) =\\
	& \piecewise{ I & \If w=I, \\[5pt]
		\WigDMat{d}(w_2 \vpmpm{+-}) \mathcal{W}^d(n_2, \mu_1-\mu_2) \WigDMat{d}(\vpmpm{+-}) & \If w=w_2, \\[5pt]
		\WigDMat{d}(w_5) \mathcal{W}^d(n_1, \mu_2-\mu_3) \WigDMat{d}(w_l) & \If w=w_3, \\[5pt]
		\WigDMat{d}(w_l \vpmpm{+-}) \mathcal{W}^d(0,\mu_2-\mu_3) \WigDMat{d}(w_3) \mathcal{W}^d(n_2, \mu_1-\mu_3) \WigDMat{d}(\vpmpm{+-}) & \If w=w_4, \\[5pt]
		\mathcal{W}^d(0,\mu_1-\mu_2) \WigDMat{d}(w_3) \mathcal{W}^d(n_1, \mu_1-\mu_3) \WigDMat{d}(w_l) & \If w=w_5.} \nonumber
\end{align}
These expressions continue to hold in case one or both $n_i=0$.

The proof at $w=w_4$, for example, starts by applying the Iwasawa decomposition \eqref{eq:FullIwasawa} with the trick \eqref{eq:w3Trick},
\begin{align*}
	& W^d(I, w_4, \mu, \psi_n) = \\
	& \WigDMat{d}(\vpmpm{+-} w_5) \int_{\R^2} \paren{\frac{\sqrt{1+u_2^2}}{1+u_2^2+u_3^2}}^{1-\mu_3} \paren{\frac{\sqrt{1+u_2^2+u_3^2}}{1+u_2^2}}^{1+\mu_1} \Dtildek{d}{\frac{u_3-i\sqrt{1+u_2^2}}{\sqrt{1+u_2^2+u_3^2}}} \\
	& \qquad \times \WigDMat{d}(w_3) \Dtildek{d}{\frac{-1+iu_2}{\sqrt{1+u_2^2}}} \e{-n_2 u_2} du_2 \, du_3.
\end{align*}
We have used that $\Dtildek{d}{i} = \WigDMat{d}(\vpmpm{+-} w_2)$.
Now substitute $u_3 \mapsto u_3 \sqrt{1+u_2^2}$, then
\begin{align*}
	W^d(I, w_4, \mu, \psi_n) =& \WigDMat{d}(\vpmpm{+-} w_5) \int_{\R} (1+u_3^2)^{\frac{-1+\mu_3-\mu_2}{2}} \Dtildek{d}{\frac{u_3-i}{\sqrt{1+u_3^2}}} du_3 \WigDMat{d}(w_3) \\
	& \qquad \times \int_{\R} (1+u_2^2)^{\frac{-1+\mu_3-\mu_1}{2}} \Dtildek{d}{\frac{1+iu_2}{\sqrt{1+u_2^2}}} \e{n_2 u_2} du_2 \Dtildek{d}{-1}.
\end{align*}
Applying the definition \eqref{eq:Wint} gives the desired evaluation.

\subsection{The Whittaker function at a degenerate character}
The following two special cases may also be evaluated explicitly:
\begin{align}
\label{eq:DegenCharWhitt}
	& W^d(I, w_l, \mu, \psi_n) = \\
	& \piecewise{
		\mathcal{W}^d(0,\mu_1-\mu_2) \WigDMat{d}(w_3) \mathcal{W}^d(0,\mu_1-\mu_3) \WigDMat{d}(w_5) \mathcal{W}^d(n_2,\mu_2-\mu_3) \WigDMat{d}(\vpmpm{+-}) & \If n_1=0, \\[5pt]
		\WigDMat{d}(w_l) \mathcal{W}^d(0,\mu_2-\mu_3) \WigDMat{d}(w_3) \mathcal{W}^d(0,\mu_1-\mu_3) \WigDMat{d}(w_5) \mathcal{W}^d(n_1,\mu_1-\mu_2) \WigDMat{d}(w_l) & \If n_2=0.} \nonumber
\end{align}
In case $n_1=n_2=0$, either expression is valid.

The proof in the case $n_1=0$ proceeds as before, but we must apply the sequence of substitutions
\begin{align}
\label{eq:CrazySub}
	u_1 \mapsto \frac{u_1}{\sqrt{1+u_2^2}}, \qquad u_3\mapsto u_3\sqrt{1+u_2^2}, \qquad u_1 \mapsto u_1 \sqrt{1+u_3^2}+u_2 u_3,
\end{align}
whose construction is explained in \cite[Section 5]{Me01}.
In case $n_2=0$, we conjugate by $w_l$ and apply the dual representation $\dualWigDMat{d}$, which reverses the rows and columns, and hence the roles of $u_1$ and $u_2$.
Then we apply the dual substitutions
\[ u_3 \mapsto u_1 u_2-u_3, \qquad u_2 \mapsto \frac{u_2}{\sqrt{1+u_1^2}}, \qquad u_3 \mapsto u_3 \sqrt{1+u_1^2}, \qquad u_2 \mapsto u_2 \sqrt{1+u_3^2}+u_1 u_3. \]

\subsection{Analytic continuation}
The analytic continuation in $\mu$ of the degenerate forms of the Whittaker function listed above follows from the known analytic continuation of the classical Whittaker function and the properties of the gamma function.
We wish to construct an integral representation of the non-degenerate long-element Whittaker function which clearly demonstrates its analytic continuation to a neighborhood of $\Re(\mu)=0$.
As above, we have
\begin{align}
\label{eq:LongEleWhittExpand}
	W^d(xyk, w, \mu, \psi_n) = \psi_n(x) p_{\rho+\mu^{w_l}}(y) W^d(I, w_l, \mu, \psi_{yn}) \WigDMat{d}(k),
\end{align}
and the dependence on the signs of $n$ is given by \eqref{eq:WhittVFE}, so it suffices to consider
\begin{align}
\label{eq:LongEleWhittExplicit}
	& W^d(I, w_l, \mu, \psi_y) =\\
	& \int_{U(\R)} \paren{1+u_2^2+u_3^2}^{\frac{-1+\mu_3-\mu_2}{2}} \paren{1+u_1^2+(u_3-u_1 u_2)^2}^{\frac{-1+\mu_2-\mu_1}{2}} \e{-y_1 u_1-y_2 u_2} \nonumber \\
	& \qquad \Dtildek{d}{\frac{-\sqrt{1+u_2^2+u_3^2}-i(u_1+u_2(u_1 u_2-u_3))}{\sqrt{1+u_2^2}\sqrt{1+u_1^2+(u_3-u_1 u_2)^2}}} \WigDMat{d}(w_3) \nonumber \\
	& \qquad \Dtildek{d}{\frac{-u_3-i\sqrt{1+u_2^2}}{\sqrt{1+u_2^2+u_3^2}}} \WigDMat{d}(w_3) \Dtildek{d}{\frac{1-iu_2}{\sqrt{1+u_2^2}}} du, \nonumber
\end{align}
with $y \in Y^+$.
For the moment, we assume that
\begin{align}
\label{eq:initWhittMBDef}
	\eta > \Re(\mu_1) > \Re(\mu_2) > \Re(\mu_3) > -\eta
\end{align}
for some small $\eta > 0$.
Applying the sequence of substitutions \eqref{eq:CrazySub}, this becomes
\begin{align}
\label{eq:ExplicitJacquet}
	& W^d(I, w_l, \mu, \psi_y) =\\
	& \int_{U(\R)} (1+u_1^2)^{\frac{-1+\mu_2-\mu_1}{2}} \paren{1+u_2^2}^{\frac{-1+\mu_3-\mu_2}{2}} \paren{1+u_3^2}^{\frac{-1+\mu_3-\mu_1}{2}} \Dtildek{d}{\frac{1+i u_1}{\sqrt{1+u_1^2}}} \WigDMat{d}(w_4) \nonumber \\
	& \qquad \Dtildek{d}{\frac{1-iu_3}{\sqrt{1+u_3^2}}} \WigDMat{d}(w_3) \Dtildek{d}{\frac{1-iu_2}{\sqrt{1+u_2^2}}} \e{-y_1 u_1 \frac{\sqrt{1+u_3^2}}{\sqrt{1+u_2^2}}-y_1 \frac{u_2 u_3}{\sqrt{1+u_2^2}} -y_2 u_2} du. \nonumber
\end{align}

For each of the three terms in the exponential, we apply the inverse Mellin transform
\begin{align}
\label{eq:PsiThetaInvMellin}
	\e{x} &= \lim_{\theta\to\frac{\pi}{2}^-} \frac{1}{2\pi i} \int_{\Re(t) = c} \abs{2\pi x}^{-t} e^{it\theta \sgn(x)} \Gamma\paren{t} \, dt,
\end{align}
for $x \ne 0$ and $c > 0$, which follows from the definition of the gamma function and Mellin inversion.
Collecting terms by the signs of each $u_i$ and applying the definition of the generalized beta function \eqref{eq:Bint}, we may write this as
\begin{align}
\label{eq:WhittMellin}
	& W^d(I, w_l, \mu, \psi_y) = p_{-\mu^{w_l}}(y) \int_{\Re(s)=(\frac{2}{10},\frac{2}{10})} (\pi y_1)^{-s_1} (\pi y_2)^{-s_2} \what{W}^d(s,\mu) \frac{ds}{(2\pi i)^2},
\end{align}
where
\begin{align}
\label{eq:WhittMellinEval}
	\what{W}^d(s,\mu) =&-2^{-s_1-s_2} (2\pi)^{\mu_1-\mu_3} \sum_{\delta\in\set{0,1}^3} (-i)^{\delta_1} i^{\delta_2+\delta_3} \sin\frac{\pi}{2}(\delta_2+s_2+\mu_3) \Gamma\paren{s_2+\mu_3} \\
	& \int_{\Re(t)=\frac{1}{10}} \sin\frac{\pi}{2}(\delta_1+t-\mu_1) \Gamma\paren{t-\mu_1} \sin\frac{\pi}{2}(\delta_3+ s_1-t) \Gamma\paren{s_1-t} \nonumber\\
	& \quad \mathcal{B}^d_{(-1)^{1-\delta_1}}\paren{1+\mu_1-t,t-\mu_2} \WigDMat{d}(w_4) \mathcal{B}^d_{(-1)^{\delta_3}}\paren{1-s_1+t, s_1-\mu_3} \nonumber\\
	& \quad \WigDMat{d}(w_3) \mathcal{B}^d_{(-1)^{\delta_2+\delta_3}}\paren{1-s_2-s_1-\mu_3+t, s_2-\mu_3-t} \frac{dt}{2\pi i}, \nonumber
\end{align}
after some rearranging.
We have dropped the limit in $\theta$, because the exponential decay factors in the $\mathcal{B}^d_\varepsilon$ functions give absolute convergence -- the first restricts the $t$ integral, the second the $s_1$ integral, and the third the $s_2$ integral.

Now we wish to extend the definition of  $\what{W}^d$ in $s$ and $\mu$, beyond the initial $\Re(s)=(\frac{2}{10},\frac{1}{10})$ and \eqref{eq:initWhittMBDef}.
Independent of the $t$ variable, $\what{W}^d$ has potential poles at $s_1=\mu_3-n$ and $s_2=-\mu_3-n$ for each $n \ge 0$.
For the $t$ integral itself, we may always deform the contour so that it passes to the left of the poles of the integrand at
\begin{align}
\label{eq:tContLeft}
	s_1+n, \qquad \mu_1+1+n, \qquad s_2-\mu_3+n,
\end{align}
and to the right of the poles at
\begin{align}
\label{eq:tContRight}
	\mu_1-n, \qquad \mu_2-n, \qquad s_1-1-n, \qquad s_1+s_2+\mu_3-1-n,
\end{align}
for each $n \ge 0$, unless a pole on the left of the contour intersects a pole on the right of the contour.
To examine the behavior in a small neighborhood of such a point, we shift the contour to the right past said point, picking up residues at the points \eqref{eq:tContLeft}.
The integral along the shifted contour is now analytic on the neighborhood, and it remains to investigate the behavior of the residues.

For the moment, we assume $\mu_i-\mu_j \notin \Z$, $i\ne j$, to avoid a discussion of the double poles that may occur.
By the explicit evaluation \eqref{eq:BexplicitEvalEven},\eqref{eq:BexplicitEvalOdd} of the $\mathcal{B}^d_\varepsilon$ function, it suffices to replace each $\mathcal{B}^d_{(-1)^\delta}(a,b)$ with a beta function of the form $B\paren{\frac{\delta+a}{2}+j, \frac{m-\delta+b}{2}-j}$, $\delta\in\set{0,1}$, $\abs{j} \le m \le d$.
In this manner, we see the residue at $t=s_1+n$, $n\equiv 1-\delta_3\pmod{2}$ (for $s_1+n$ in small neighborhoods of \eqref{eq:tContRight}), has at most simple poles at
\[ s_1=\mu_1-m, \qquad s_1=\mu_2-m, \]
the residue at $t=\mu_1+1+n$, $n\equiv 1-\delta_1\pmod{2}$ is zero, and the residue at $t=s_2-\mu_3+n$ has at most simple poles at 
\[ s_2=-\mu_1-m, \qquad s_2=-\mu_2-m, \]
$m \ge 0$.

Taking our analysis above, together with the bound \eqref{eq:BtildeBound}, implies
\begin{prop}
\label{prop:WhittMellin}
	The matrix-valued function $\what{W}^d(s,\mu)$ has meromorphic continuation to all $s\in\C^2$ and all $\mu$.
	$\what{W}^d_{m',m}(s,\mu)$ has at most simple poles at $s_1=\mu_i-n$ or $s_2=-\mu_i-n$, $n \ge 0$, provided $\mu_i-\mu_j \notin \Z$, $i\ne j$.
	For $d$, $\Re(s)$ and $\Re(\mu)$ in some fixed compact set, $\what{W}^d_{m',m}(s,\mu)$ has exponential decay in $\Im(s_i)$ for $\abs{\Im(s_i)} > 10 \max_j \abs{\Im(\mu_j)}$ and is otherwise polynomially bounded in the coordinates of $s$ and $\mu$.
\end{prop}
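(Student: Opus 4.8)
The plan is to extract every assertion directly from the explicit one-dimensional Mellin--Barnes representation \eqref{eq:WhittMellinEval}, treating its $t$-integrand as a ratio of gamma functions dressed with polynomial and trigonometric factors. The first step is to use the factorizations \eqref{eq:BfactorEven}, \eqref{eq:BfactorOdd} to replace each of the three matrices $\mathcal{B}^d_{(-1)^\bullet}(a,b)$ occurring in \eqref{eq:WhittMellinEval} by $\wtilde{\mathcal{B}}^d_\bullet(a,b)$ --- a matrix whose entries are polynomials in $a,b$ --- times a diagonal matrix of ratios $\Gamma\Gamma/\Gamma$. After this, each entry of the $t$-integrand is a finite sum of terms of the shape (product of $\sin\tfrac{\pi}{2}(\cdots)$ factors) $\times$ (polynomial in $t,s,\mu$) $\times$ (ratio of gamma functions, of which four involve $t$), i.e.\ precisely the kind of integrand to which standard contour-shifting applies.

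For the meromorphic continuation I would first fix $\mu$ generic ($\mu_i-\mu_j\notin\Z$) and $\Re(s)$ generic. The $t$-poles of the integrand are exactly the points \eqref{eq:tContLeft} (``left'' poles) and \eqref{eq:tContRight} (``right'' poles), and the contour $\Re(t)=\tfrac{1}{10}$ can be placed to separate the two families, so the integral defines a holomorphic function of $(s,\mu)$ wherever this separation is possible. To cross a hyperplane on which a left pole meets a right pole, I shift the $t$-contour a bounded amount to the right past the colliding point, collecting residues at the left poles \eqref{eq:tContLeft}; the shifted integral is holomorphic near the hyperplane (absolute convergence is guaranteed by the polynomial bound \eqref{eq:BtildeBound} on the $\wtilde{\mathcal{B}}^d$-part together with Stirling on the gamma ratios, cf.\ the remarks after \eqref{eq:BtildeBound} and after \eqref{eq:WhittMellinEval}), while each residue is a function of $(s,\mu)$ of the same type but with one integration fewer, so its meromorphy is immediate by induction on the number of integrations. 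Iterating gives meromorphic continuation to all $s\in\C^2$ for $\mu$ generic, and continuation to all $\mu$ then follows since, for $s$ off its polar set, the function is holomorphic and locally bounded in $\mu$ away from the countably many hyperplanes $\mu_i-\mu_j\in\Z$, hence extends across them.

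The pole analysis in $s$ then splits in two. The ``$t$-independent'' part --- the factor $\Gamma(s_2+\mu_3)$ and the numerator gamma factors of the second and third $\mathcal{B}^d$ in \eqref{eq:WhittMellinEval} whose argument is free of $t$ --- produces at most simple poles at $s_2=-\mu_3-n$ and $s_1=\mu_3-n$, $n\ge0$. The residues collected during the continuation give the rest: using the beta-function form $B\paren{\tfrac{\delta+a}{2}+j,\tfrac{m-\delta+b}{2}-j}$, $\abs{j}\le m\le d$, for the entries of the $\wtilde{\mathcal{B}}^d$ matrices, one checks that the residue at $t=s_1+n$ contributes at most simple poles at $s_1=\mu_1-m$, $s_1=\mu_2-m$, the residue at $t=s_2-\mu_3+n$ at most simple poles at $s_2=-\mu_1-m$, $s_2=-\mu_2-m$, and --- the one genuinely clean point --- the residue at $t=\mu_1+1+n$ vanishes identically, because there the sine factor $\sin\tfrac{\pi}{2}(\delta_1+t-\mu_1)$ is evaluated at an even integer (with the parity of $n$ dictated by which $\mathcal{B}^d_{\pm1}$ has the pole). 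Assembling these gives exactly the claimed polar set $s_1=\mu_i-n$, $s_2=-\mu_i-n$, and the hypothesis $\mu_i-\mu_j\notin\Z$ prevents distinct gamma poles from coalescing, so every pole is at most simple. For the growth statement, with $d$, $\Re(s)$, $\Re(\mu)$ in a compact set I bound $\abs{\wtilde{\mathcal{B}}^d_\bullet}$ polynomially via \eqref{eq:BtildeBound}, apply Stirling to the surviving gamma ratios, and note that the product of sine factors grows at most like $e^{c(\abs{\Im(s)}+\abs{\Im(\mu)}+\abs{\Im(t)})}$ with $c$ strictly less than the rate of decay in $t$ coming from the four $t$-gamma ratios; summing the finitely many residue terms then yields a bound polynomial in the imaginary parts in general, and exponentially decaying in $\Im(s_i)$ once $\abs{\Im(s_i)}>10\max_j\abs{\Im(\mu_j)}$, which is precisely the regime in which the region $\abs{\Im(a-b)}>\abs{\Im(a+b)}$ of the remark after \eqref{eq:BtildeBound} is reached for the relevant $\mathcal{B}^d$ arguments.

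I expect the main obstacle to be the bookkeeping in the second and third steps: one must enumerate \emph{all} the hyperplanes on which a left $t$-pole collides with a right one and compute the residue at each, and it is the vanishing of the residue at $t=\mu_1+1+n$ that keeps spurious polar lines out of $s_1$. Secondarily, one must keep the growth estimates uniform as the $t$-contour is shifted and verify that the residue series converge. The honest double poles that appear when $\mu_i-\mu_j\in\Z$ are sidestepped here only by the hypothesis together with the continuity argument above; determining their orders would require a separate, more delicate treatment.
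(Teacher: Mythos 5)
Your proposal is correct and follows essentially the same route as the paper: continuation of \eqref{eq:WhittMellinEval} by separating the left poles \eqref{eq:tContLeft} from the right poles \eqref{eq:tContRight}, shifting the $t$-contour past collisions and analyzing the residues via the beta-function form of the $\mathcal{B}^d_\varepsilon$ entries, with the polar set in $s$ coming from the $t$-independent gamma factors plus the residues at $t=s_1+n$ and $t=s_2-\mu_3+n$, and the growth bound from \eqref{eq:BtildeBound} and Stirling. Your observation that the residue at $t=\mu_1+1+n$ is killed by the sine factor $\sin\tfrac{\pi}{2}(\delta_1+t-\mu_1)$ (with the parity of $n$ forced by which $\mathcal{B}^d_{\pm1}$ carries the pole) is exactly the mechanism behind the vanishing the paper asserts without comment.
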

Note that we cannot, in general, remove the phrase ``at most'' without a significant analysis, because multiple terms in the sums \eqref{eq:BexplicitEvalEven},\eqref{eq:BexplicitEvalOdd} and the matrix multiplications in \eqref{eq:WhittMellinEval} may contribute to, and possibly cancel, a given residue.

Taking the contour $\Re(s)=(A_1,A_2)$ in \eqref{eq:WhittMellin}, we arrive at
\begin{cor}
\label{cor:WhitBd}
	There exists some $C > 0$ such that, for $d$ and $A_1, A_2 > \max_j \abs{\Re(\mu_j)}$ in some fixed compact set,
	\[ W^d_{m',m}(y, \mu,\psi_{1,1}) \ll y_1^{1-A_1} y_2^{1-A_2} \norm{\mu}^{C(1+A_1+A_2)}. \]
\end{cor}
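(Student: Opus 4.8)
The plan is to read the bound straight off the Mellin--Barnes representation \eqref{eq:WhittMellin}, after moving the $s$-contour out to $\Re(s)=(A_1,A_2)$ and estimating the resulting absolutely convergent integral using Proposition \ref{prop:WhittMellin}. First I would normalize. Combining \eqref{eq:WhittYFE} at $w=w_l$, $n=(1,1)$ with \eqref{eq:WhittMellin} and using $p_{\rho+\mu^{w_l}}(y)\,p_{-\mu^{w_l}}(y)=p_\rho(y)=y_1y_2$, one gets
\[ W^d(y,w_l,\mu,\psi_{1,1}) = y_1 y_2 \int_{\Re(s)=(A_1,A_2)} (\pi y_1)^{-s_1}(\pi y_2)^{-s_2}\,\what{W}^d(s,\mu)\,\frac{ds}{(2\pi i)^2}. \]
To justify shifting from the original contour $\Re(s)=(\tfrac{2}{10},\tfrac{2}{10})$ to $\Re(s)=(A_1,A_2)$ with $A_1,A_2>\max_j\abs{\Re(\mu_j)}$, I invoke Proposition \ref{prop:WhittMellin}: all poles of $\what{W}^d(s,\mu)$ lie in $\{\Re(s_1)\le\max_j\abs{\Re(\mu_j)}\}\cup\{\Re(s_2)\le\max_j\abs{\Re(\mu_j)}\}$ (for generic $\mu$ they sit at $s_1=\mu_i-n$ or $s_2=-\mu_i-n$, and when $\mu_i-\mu_j\in\Z$ the possible double poles occur at the same places, or one simply argues for generic $\mu$ and passes to a limit), so no residue is crossed; and the exponential decay of $\what{W}^d$ in $\Im(s_i)$ for $\abs{\Im(s_i)}>10\max_j\abs{\Im(\mu_j)}$ kills the horizontal segments and makes the final integral converge absolutely for every $y_1,y_2>0$.

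Next, taking absolute values on the contour ($\abs{(\pi y_i)^{-s_i}}=(\pi y_i)^{-A_i}$) gives, entrywise,
\[ \bigl|W^d_{m',m}(y,\mu,\psi_{1,1})\bigr| \le \pi^{-A_1-A_2}\,y_1^{1-A_1}\,y_2^{1-A_2}\,\frac{1}{(2\pi)^2}\int_{\R^2} \bigl|\what{W}^d_{m',m}(A_1+it_1,\,A_2+it_2,\,\mu)\bigr|\,dt_1\,dt_2, \]
so it remains to bound the last integral. Here I would set $M:=10\max_j\abs{\Im(\mu_j)}\ll\norm{\mu}$ and split the domain. On the box $\abs{t_1},\abs{t_2}\le M$, Proposition \ref{prop:WhittMellin} bounds the integrand polynomially, $\ll(1+\abs{t_1}+\abs{t_2}+\norm{\mu}+A_1+A_2)^{C_0}\ll(\norm{\mu}+A_1+A_2+1)^{C_0}$, so this part contributes $\ll M^2(\norm{\mu}+A_1+A_2+1)^{C_0}\ll(\norm{\mu}+A_1+A_2+1)^{C_0+2}$; outside the box the exponential decay in $t_i$ makes the tail $O(1)$. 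Since $A_1,A_2$ range in a fixed compact set, this is $\ll\norm{\mu}^{C(1+A_1+A_2)}$ for a suitable $C$, which is the assertion.

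The only real work is in the value of the exponent $C_0$: to see that it may be taken $\ll 1+A_1+A_2$ (so that $C$ is genuinely absolute rather than merely depending on the compact set), I would return to \eqref{eq:WhittMellinEval}. On $\Re(s)=(A_1,A_2)$ the gamma factors there --- $\Gamma(s_2+\mu_3)$, $\Gamma(s_1-t)$, $\Gamma(t-\mu_1)$, together with the $\mathcal{B}^d_\varepsilon$ gamma factors coming from \eqref{eq:BfactorEven}--\eqref{eq:BfactorOdd} --- have arguments whose real parts are $\asymp A_i$ or bounded, so Stirling converts each $(\text{sine})\times(\text{gamma})$ pair into polynomial growth of degree $\asymp A_i$ in the relevant imaginary part, the polynomials $\wtilde{\mathcal{B}}^d_\varepsilon$ are controlled by $(1+\abs{\Im(\cdot)})^{\le d}$ via \eqref{eq:BtildeBound}, and the inner $t$-integral converges with a polynomial bound thanks to the exponential decay in $\Im(t)$ noted just after \eqref{eq:WhittMellinEval}, the $t$-contour being held away from the poles \eqref{eq:tContLeft}--\eqref{eq:tContRight} exactly as in the discussion preceding Proposition \ref{prop:WhittMellin}. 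Carrying out this degree bookkeeping and the bound on the $t$-integral is where the effort lies; everything else is formal. (If one is content with $C$ depending on the fixed compact set, this last paragraph is unnecessary and the corollary is immediate from Proposition \ref{prop:WhittMellin}.)
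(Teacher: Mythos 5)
Your proposal is correct and is exactly the paper's argument: the paper proves the corollary in one line by ``taking the contour $\Re(s)=(A_1,A_2)$ in \eqref{eq:WhittMellin}'' and appealing to Proposition \ref{prop:WhittMellin} for the location of the poles, the exponential decay in $\Im(s_i)$, and the polynomial bound, with the case $\mu_i-\mu_j\in\Z$ handled by continuity just as you suggest. Your final paragraph tracking the precise dependence of $C$ is more care than the paper takes (and, as you note, is unnecessary once $C$ and the implied constant are allowed to depend on the fixed compact set).
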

Though the proposition only applies to $\mu_i-\mu_j\notin\Z$, we may extend by continuity -- this assumption was purely to avoid considering the double poles that will occur.
Of course, this result may be strengthened considerably, but the present work will only require very weak estimates here.

One might wonder about our use of the Mellin-Barnes integral representation of the Whittaker function.
In fact, by shifting polynomials among the gamma functions, the $t$ integral may be reduced to a finite sum of ``5-over-1'' forms as in Barnes' second lemma.
The sum of arguments of the gamma functions in the numerator minus that of the denominator will then be some integer.
In case the integer difference is non-positive, more shifting operations will produce a finite sum of integrals suitable for Barnes' second lemma -- this is sufficient to write $\what{W}^d_{m',m}(s,\mu)$ as a sum of quotients of gamma functions in a number of cases (e.g. $d=0,1$).
The trouble occurs when the integer difference is positive, and this does not seem to lend itself to the usual methods of evaluation.
Even were it possible to evaluate such integrals in general, the resulting expression for $\what{W}^d$ would likely be so complicated as to be unusable.

An alternate approach is to realize that the most interesting cases of the Whittaker function arise from applying raising operators to the $d=0,1$ functions.
The Mellin transform of these two functions may be evaluated explicitly, so that applying raising operators just produces polynomial multiples, but effective bounds for polynomials in four or five variables defined through multiple-term recurrence relations are quite difficult.

\subsection{Functional Equations}
Though they are not directly relevant to our proof of the spectral expansion, we give the functional equations (in $\mu$) of the matrix-valued Whittaker functions.
The functional equations of the various degenerate Whitttaker functions follow directly from the functional equation of the classical Whittaker function, so we just consider the long-element Whittaker function $W^d(y,\mu,\psi_{11})$.
It is sufficient to give the functional equations for the generators of the Weyl group action $\mu \mapsto \mu^{w_2}$ and $\mu \mapsto \mu^{w_3}$.
As we have the analytic continuation of all relevant integrals, it suffices to work formally.
To be precise, we may arrange that the equality of integrals at each step holds for $\mu$ in the region of absolute convergence of both sides; then getting between steps is an application of analytic continuation.

Starting from \eqref{eq:LongEleWhittExplicit}, we make the substitution
\[ u_1 \mapsto \frac{u_1 \sqrt{1+u_2^2+u_3^2}+u_2 u_3}{1+u_2^2}. \]
Then recognizing the classical integral \eqref{eq:Wint},
\begin{align}
\label{eq:WhittPartialEval}
	& W^d(I, \mu, \psi_y) =\\
	& \int_{\R^2} \paren{1+u_2^2+u_3^2}^{\frac{-1+\mu_3-\mu_1}{2}} \paren{1+u_2^2}^{\frac{-1+\mu_1-\mu_2}{2}} \mathcal{W}^d\paren{y_1 \frac{\sqrt{1+u_2^2+u_3^2}}{1+u_2^2},\mu_1-\mu_2} \Dtildek{d}{-1} \nonumber \\
	& \qquad \WigDMat{d}(w_3) \Dtildek{d}{\frac{-u_3-i\sqrt{1+u_2^2}}{\sqrt{1+u_2^2+u_3^2}}} \WigDMat{d}(w_3) \Dtildek{d}{\frac{1-iu_2}{\sqrt{1+u_2^2}}} \e{-y_1 \frac{u_2 u_3}{1+u_2^2}-y_2 u_2} du_2 \, du_3. \nonumber
\end{align}

Now apply the functional equation of the classical Whittaker function \eqref{eq:ClassWhittFE}, and applying \eqref{eq:WhittPartialEval} in reverse at $\mu^{w_2}$ gives
\begin{align}
\label{eq:FirstWhittFE}
	W^d(y, \mu, \psi_{11}) = \pi^{\mu_1-\mu_2} \Gamma^d_\mathcal{W}(\mu_2-\mu_1,+1) W^d(y, w_l, \mu^{w_2}, \psi_{11}),
\end{align}
after using \eqref{eq:LongEleWhittExpand}.

For $\mu \mapsto \mu^{w_3}$, we start with the dual representation, and apply now the substitutions $u_3 \mapsto u_1 u_2 - u_3$, then
\[ u_2 \mapsto \frac{u_2 \sqrt{1+u_1^2+u_3^2}+u_1 u_3}{1+u_1^2}, \]
arriving at the integral representation
\begin{align}
\label{eq:WhittDualPartialEval}
	& W^d(I, \mu, \psi_y) = \\
	& \int_{\R^2} \paren{1+u_1^2+u_3^2}^{\frac{-1+\mu_3-\mu_1}{2}} \paren{1+u_1^2}^{\frac{-1+\mu_2-\mu_3}{2}} \WigDMat{d}(w_l) \mathcal{W}^d\paren{y_2 \frac{\sqrt{1+u_1^2+u_3^2}}{1+u_1^2},\mu_2-\mu_3} \Dtildek{d}{-1} \nonumber \\
	& \qquad \WigDMat{d}(w_3) \Dtildek{d}{\frac{-u_3-i\sqrt{1+u_1^2}}{\sqrt{1+u_1^2+u_3^2}}} \WigDMat{d}(w_3) \Dtildek{d}{\frac{1+iu_1}{\sqrt{1+u_1^2}}} \WigDMat{d}(w_l) \e{-y_1 u_1-y_2 \frac{u_1 u_3}{1+u_1^2}} du_1 \, du_3. \nonumber
\end{align}

From this follows the functional equation
\begin{align}
\label{eq:SecondWhittFE}
	W^d(y, \mu, \psi_{11}) =& \pi^{\mu_2-\mu_3} \WigDMat{d}(w_l) \Gamma^d_\mathcal{W}(\mu_3-\mu_2,+1) \WigDMat{d}(w_l) W^d(y, \mu^{w_3}, \psi_{11}).
\end{align}

Now set
\begin{align*}
	T^d(w_2,\mu) =&\pi^{\mu_1-\mu_2} \Gamma^d_\mathcal{W}(\mu_2-\mu_1,+1), &T^d(w_3,\mu) =& \pi^{\mu_2-\mu_3} \WigDMat{d}(w_l) \Gamma^d_\mathcal{W}(\mu_3-\mu_2,+1) \WigDMat{d}(w_l),
\end{align*}
\begin{align*}
	T^d(I,\mu) =&I, &T^d(w_4,\mu) =& T^d(w_3,\mu)T^d(w_2,\mu^{w_3}), &T^d(w_5,\mu)=& T^d(w_2,\mu)T^d(w_3,\mu^{w_2}),
\end{align*}
and
\[ T^d(w_l,\mu)=T^d(w_2,\mu)T^d(w_4,\mu^{w_2}). \]
It can be shown, by relating $T^d(w,\mu)$ to $W^d(I,w,\mu,\psi_{00})$ and applying \eqref{eq:DegenCharWhitt}, that also
\[ T^d(w_l,\mu)=T^d(w_3,\mu)T^d(w_5,\mu^{w_3}). \]
(We will examine one case of this in more detail in section \ref{sect:minimalFEs}.)
Then the functional equations take the form
\begin{prop}
\label{prop:WhittFEs}
	For each $w\in W$,
	\[ W^d(g, \mu, \psi_n) = T^d(w,\mu) W^d(g, \mu^w, \psi_n). \]
\end{prop}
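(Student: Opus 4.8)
The plan is to bootstrap Proposition \ref{prop:WhittFEs} from the two ``generator'' functional equations \eqref{eq:FirstWhittFE} and \eqref{eq:SecondWhittFE} already established for $w_2$ and $w_3$, together with the definitions of the $T^d(w,\mu)$ as ordered products along reduced words. Since the Weyl group $W \cong S_3$ is generated by the simple reflections $w_2$ and $w_3$, and every element has a reduced expression in terms of these, the strategy is: (i) verify the two generator cases, which are exactly \eqref{eq:FirstWhittFE} and \eqref{eq:SecondWhittFE} after using \eqref{eq:WhittKFE}--\eqref{eq:WhittVFE} to strip off the dependence on $g$ beyond $y$ and on the signs of $n$, reducing to $W^d(y,\mu,\psi_{11})$; (ii) compose: for a length-two element such as $w_4 = w_3 w_2$, apply the $w_3$ equation, then the $w_2$ equation with $\mu$ replaced by $\mu^{w_3}$, obtaining $W^d(g,\mu,\psi_n) = T^d(w_3,\mu)W^d(g,\mu^{w_3},\psi_n) = T^d(w_3,\mu)T^d(w_2,\mu^{w_3})W^d(g,\mu^{w_3 w_2},\psi_n)$, which matches the stated $T^d(w_4,\mu)$ since $\mu^{w_4} = \mu^{w_3 w_2}$ (check against the table of the Weyl action on $\mu$); similarly $w_5 = w_2 w_3$ and $w_l = w_2 w_4 = w_2 w_3 w_2$.

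The one genuine subtlety — and the step I expect to be the main obstacle — is \emph{well-definedness}: $w_l$ has two reduced expressions, $w_2 w_3 w_2$ and $w_3 w_2 w_3$, so one must check the cocycle/consistency relation $T^d(w_2,\mu)T^d(w_4,\mu^{w_2}) = T^d(w_3,\mu)T^d(w_5,\mu^{w_3})$, i.e. the braid relation for the $T^d$-matrices. This is precisely the identity flagged in the excerpt just before the proposition. The cleanest route is to prove it not by brute multiplication of the $\Gamma^d_\mathcal{W}$ and $\WigDMat{d}(w_l)$ factors, but indirectly: relate $T^d(w,\mu)$ to the degenerate Whittaker value $W^d(I,w,\mu,\psi_{00})$ (as the excerpt suggests), where the explicit formulas \eqref{eq:DegenCharWhitt} for $W^d(I,w_l,\mu,\psi_{n})$ at $n_1=0$ versus $n_2=0$ give the two factorizations of the \emph{same} object, forcing the two products of $T^d$'s to agree. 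Concretely, one evaluates $W^d(I,w_l,\mu,\psi_{00})$ both ways using \eqref{eq:DegenWhitt} and \eqref{eq:DegenCharWhitt}, matches the resulting products of $\mathcal{W}^d(0,\cdot)$ and $\WigDMat{d}(w_i)$ factors against the definitions of $T^d(w_4,\mu)$ and $T^d(w_5,\mu)$, and the braid relation drops out; the commutativity relation \eqref{eq:W0v} and the identity \eqref{eq:WDtildek} are the tools that let the bookkeeping close.

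With well-definedness in hand, the proof is a finite check over the six elements of $W$, each reduced to a composition of the two generator equations, analytically continued from the common region of absolute convergence of both sides (as noted, ``working formally'' suffices, since all the integrals involved are meromorphically continued). The passage from $W^d(y,\mu,\psi_{11})$ back to general $W^d(g,\mu,\psi_n)$ is routine: \eqref{eq:WhittKFE} handles the $K$-variable, \eqref{eq:WhittVFE2} handles the signs of $n$ (noting $T^d(w,\mu)$ is independent of those signs, which is why the conjugation by $\WigDMat{d}(v)$ on the right passes through cleanly, using that $v$ and $\mu^w$ interact correctly), and \eqref{eq:WhittUFE}--\eqref{eq:WhittNFE} handle the $x$- and $y$-dependence; all of these are linear in $W^d(y,\mu,\psi_{11})$ with coefficients that are the same on both sides of the asserted equation, so the $g$-general statement follows from the $y$-specific one.
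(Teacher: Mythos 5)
Your proposal is correct and follows essentially the same route as the paper: the generator equations \eqref{eq:FirstWhittFE} and \eqref{eq:SecondWhittFE} are composed along reduced words in $w_2,w_3$, the consistency of the two factorizations of $T^d(w_l,\mu)$ is checked by relating the $T^d$-matrices to the degenerate Whittaker values via \eqref{eq:DegenCharWhitt}, and the reduction to $W^d(y,\mu,\psi_{11})$ is handled by \eqref{eq:WhittKFE}--\eqref{eq:WhittVFE}, with analytic continuation justifying the formal manipulations. The only difference is one of emphasis: since $T^d(w_l,\mu)$ is \emph{defined} as the product along the single word $w_2w_4$, the braid relation you single out as the main obstacle is strictly needed only for the alternative factorization $T^d(w_3,\mu)T^d(w_5,\mu^{w_3})$, which the paper likewise records as a side remark rather than as part of the proof of the proposition itself.
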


It is somewhat more useful to consider the functional equations of $\Sigma^d_\chi W^d(y,\mu,\psi_{11})$, which functions transform according to the character $\chi$ on $V$,
\[ \Sigma^d_{V,\chi} W^d(y,\mu,\psi_n^v) = \chi(w_l v w_l) \Sigma^d_{V,\chi} W^d(y,\mu,\psi_n) \WigDMat{d}(v), \]
and these are the Whittaker functions which occur in the Fourier expansion of the Maass forms, assuming we have enforced a similar condition on the Maass forms themselves.
We will not work out these functional equations here, except to note that those for the trivial character $\chi=1$ follow from the functional equations of the minimal parabolic Eisenstein series, see Corollary \ref{cor:EvenWhittFEs}.
We leave the remaining equations to the interested reader.

\subsection{Asymptotics}
\label{sect:WhittAsymps}
On the region of absolute convergence, say
\[ \Re(\mu_1)-\epsilon > \Re(\mu_2) > \Re(\mu_3)+\epsilon, \]
it is easy to see from the Jacquet integral \eqref{eq:LEWhittDef} and Proposition \ref{prop:WhittMellin} that the first term asymptotic as $y \to 0$ is
\begin{align}
\label{eq:WhittFirstAsymp}
	W^d(y,\mu,\psi_{11}) \sim W^d(y,\mu,\psi_{00}),
\end{align}
in the sense that the difference of the two sides is a matrix whose components are
\[ \ll_{d,\mu} p_{\rho+\Re(\mu^{w_l})}(y) (y_1^\epsilon+y_2^\epsilon). \]

This extends by analytic continuation and Proposition \ref{prop:WhittFEs} to
\begin{align}
\label{eq:WhittSecondAsymp}
	W^d(y,\mu,\psi_{11}) \sim \sum_{w\in W} T^d(w,\mu) W^d(y,\mu^w,\psi_{00})
\end{align}
on $\Re(\mu) = 0$ with $\mu_i \ne \mu_j$ for $i\ne j$, and by Proposition \ref{prop:WhittMellin}, we may strengthen the error bound to
\[ \ll_{d,\mu} y_1 y_2 (y_1+y_2). \]

\subsection{Fourier expansions}
\label{sect:FourierExp}
Smooth automorphic functions on $GL(3)$ have a Fourier expansion due to Piatetski-Shapiro \cite{PS01} and Shalika \cite{Shal01}, see the proof in \cite[Theorem 5.3.2]{Gold01}.
(Note that right $K$ invariance is not used there, and the statement for non-cuspidal functions is false; compare with \eqref{eq:FourierExpansion}.)
This readily applies to smooth vector- and matrix-valued functions on $G$ which are left-invariant by $\Gamma$.
If $\phi$ is such a function, we define its Fourier coefficients as
\[ \wtilde{\phi}_n(g) = \int_{U(\Z)\backslash U(\R)} \phi(ug) \wbar{\psi_n(u)} du, \]
and the Fourier expansion takes the form
\begin{align}
\label{eq:FourierExpansion}
	\phi(g) =& \sum_{n_1\in\Z} \wtilde{\phi}_{(n_1,0)}(g)+\sum_{\gamma\in U(\Z)\backslash SL(2,\Z)} \sum_{n \in \Z \times \N} \wtilde{\phi}_n(\gamma g).
\end{align}
To avoid excessive notation in subscripts and function arguments, we have embedded $SL(2,\Z)$ into $SL(3,\Z)$ in the upper left corner, $\SmallMatrix{*&*\\ *&*\\&&1}$, and $U(\Z)\backslash SL(2,\Z)$ is shorthand for $(U(\Z)\cap SL(2,\Z))\backslash SL(2,\Z)$.

A matrix-valued automorphic function $\phi(xyk) = \phi(xy)\WigDMat{d}(k)$ which is an eigenfunction of the $SL(3,\R)$ differential operators will have Fourier coefficients which are multiples of the Whittaker functions above.
Precisely, if $\phi$ shares the eigenvalues of $p_{\rho+\mu_\phi}$, then
\begin{align}
\label{eq:FWCoefsDef}
	\wtilde{\phi}_n(g) =& \sum_{w\in W} \delta_{n,w} \frac{\rho_\phi(w, n)}{\wtilde{n}_1 \wtilde{n}_2} W^d(\wtilde{n} g, w, \mu_\phi, \psi_{\sgn(n)}),
\end{align}
with $\wtilde{n}$ again being the $y$-matrix having coordinates $\wtilde{n}_i = \abs{n_i}$ if $n_i \ne 0$ and 1 otherwise, and $\delta_{n,w}$ is one if $\psi_n$ is trivial on $U_w(\R)$, i.e. when one of
\[ \begin{array}{lclclcl}
	w=I, n=0, &\text{or}& w=w_2, n_1=0, &\text{or}& w=w_3, n_2=0 &\text{or}& \\
	w=w_4, n_1=0, &\text{or}& w=w_5, n_2=0, &\text{or}& w=w_l, 
\end{array} \]
is true, and zero otherwise.
The matrices $\rho_\phi(w,n)$ are called the Fourier-Whittaker coefficients of $\phi$.
We will not seek to prove this here, but rather demonstrate it for the Eisenstein series by computing their Fourier coefficients directly.

The absolute convergence of the $n$ and $\gamma$ sums in the Fourier expansion follows from applying the usual rapid convergence of the classical Fourier expansion of a smooth function during the proof, but we require a direct proof of this fact.
Essentially, we will give the trivial bound on the supremum norm coming from the Fourier expansion.

We split \eqref{eq:FourierExpansion} into pieces by the degeneracy of the characters,
\[ \phi(g) = \wtilde{\phi}_{00}(g)+\phi_1(g)+\phi_2(g)+\phi_3(g), \]
where
\begin{align*}
	\phi_1(g) =& \sum_{n_1\ne 0} \wtilde{\phi}_{(n_1,0)}(g), \\
	\phi_2(g) =& \sum_{\gamma\in U(\Z)\backslash SL(2,\Z)} \sum_{n_2 \in \N} \wtilde{\phi}_{(0,n_2)}(\gamma g), \\
	\phi_3(g) =& \sum_{\gamma\in U(\Z)\backslash SL(2,\Z)} \sum_{\substack{n \in \Z \times \N\\ n_1 \ne 0}} \wtilde{\phi}_n(\gamma g).
\end{align*}

\begin{prop}
\label{eq:FourierExpBound}
	Suppose the entries of $\rho_\phi(w,n)$ are bounded by $(\wtilde{n}_1 \wtilde{n}_2)^A$, for some $A>0$ and let $\mu=\mu_\phi$, $c = \max_i \abs{\Re(\mu_i)}$, then the $n$ and $\gamma$ sums of \eqref{eq:FourierExpansion} converge absolutely, and for any $B > \Max{c,A}+1$,
	\begin{align*}
		\wtilde{\phi}_{00}(xyk) \ll_{d, \mu}& (y_1+y_1^{-1})^{1+c} (y_2+y_2^{-1})^{1+c}, \\
		\phi_1(xyk) \ll_{d, \mu}& (y_2+y_2^{-1})^{1+c} y_1^{-B}, \\
		\phi_2(xyk) \ll_{d, \mu}& (y_1+y_1^{-1})^{1+c} y_2^{-B}, \\
		\phi_3(xyk) \ll_{d, \mu}&\, y_1^{\frac{3}{2}+\epsilon} y_2^\epsilon (y_2+y_2^{-1}) (y_1^2 y_2)^{-B}.
	\end{align*}
	The dependence of the implied constants on $\Im(\mu)$ is polynomial.
\end{prop}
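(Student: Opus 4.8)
The plan is to bound each piece of the Fourier expansion separately, using the explicit shape of the Whittaker coefficients in \eqref{eq:FWCoefsDef} together with the decay estimates of Corollary \ref{cor:WhitBd} (extended by the functional equations and the degenerate evaluations \eqref{eq:DegenWhitt}, \eqref{eq:DegenCharWhitt}) and the hypothesis $\norm{\rho_\phi(w,n)} \ll (\wtilde n_1 \wtilde n_2)^A$. The constant term $\wtilde\phi_{00}$ involves only the Weyl elements for which $\psi_{00}$ is trivial on $U_w$, i.e.\ all six; each contributes a power function $p_{\rho+\mu^w}(y)\WigDMat{d}(w_3)\cdots$ coming from \eqref{eq:DegenWhitt}–\eqref{eq:DegenCharWhitt}, and since $\abs{\WigD{d}{m'}{m}(w_3)}\le 1$ and $\abs{\mathcal{W}_m(0,u)}$ is bounded for $\Re(u)$ in a compact set, the bound $\ll (y_1+y_1^{-1})^{1+c}(y_2+y_2^{-1})^{1+c}$ follows by taking the worst sign of each exponent $\Re(\mu^w_i)$, all of which are $\le c$ in absolute value after renormalizing by $\rho$.

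For $\phi_1(g) = \sum_{n_1\ne 0}\wtilde\phi_{(n_1,0)}(g)$, only $w\in\set{w_3,w_5}$ contribute (those with $\delta_{n,w}=1$ when $n_2=0$), and \eqref{eq:DegenWhitt} gives each summand as a product of a classical Whittaker matrix $\mathcal{W}^d(\abs{n_1}y_1\cdots,\,\cdot\,)$ against bounded $\WigDName$-matrices and a power $p_{\rho+\mu^w}$ in the remaining variable. The classical Whittaker function decays like $e^{-2\pi\abs{n_1}y_1\cdot(\text{const})}$, or more crudely, via the Mellin–Barnes representation, like $(\abs{n_1}y_1)^{-B}$ for any $B$; combined with $\abs{n_1}^A$ from $\rho_\phi$ and the prefactor $1/\wtilde n_1$, the sum over $n_1$ converges provided $B>A+1$, and yields the claimed $(y_2+y_2^{-1})^{1+c}y_1^{-B}$ after pulling out the $y_2$-power function $p_{\rho+\mu^w}$ (whose exponent is again $\le 1+c$ in absolute value). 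The piece $\phi_2$ is handled identically after swapping the roles of $y_1,y_2$ and $n_1,n_2$, except one must also estimate $\phi_2(\gamma g)$ summed over $\gamma\in U(\Z)\backslash SL(2,\Z)$; writing $\gamma g$ in Iwasawa form, the $y$-coordinates of $\gamma g$ are controlled by the classical $SL(2)$ computation and the $\gamma$-sum converges by the same $B$-power decay in the first $y$-coordinate of $\gamma g$, which is the standard argument that makes the $GL(2)$ Fourier expansion converge.

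The main obstacle is $\phi_3(g) = \sum_\gamma\sum_{n_1\ne0,\,n_2\ge1}\wtilde\phi_n(\gamma g)$, the genuinely non-degenerate part: here only $w=w_l$ contributes, and one must invoke Corollary \ref{cor:WhitBd} with $y$ replaced by $\wtilde n \cdot y(\gamma g)$, i.e.\ $W^d_{m',m}(\wtilde n\, y(\gamma g),\mu,\psi_{\sgn(n)}) \ll (\abs{n_1}y_1')^{1-A_1}(\abs{n_2}y_2')^{1-A_2}\norm{\mu}^{C(1+A_1+A_2)}$ for any $A_1,A_2>c$, where $y'=y(\gamma g)$. One then has to sum this against $\abs{n_1}^A\abs{n_2}^A/(\abs{n_1}\abs{n_2})$ over $n_1\ne0$, $n_2\ge1$ — which converges as soon as $A_1,A_2>A+1$ — and over the $SL(2,\Z)$-coset representatives $\gamma$; the latter sum is controlled because $y_1'=y_1/\abs{c z+d}^2$-type factors (from the explicit Iwasawa decomposition in \S\ref{sect:Iwasawa}) give, after summing, a finite quantity times a fixed power of the original $y_1,y_2$. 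Tracking the exact exponents through the Iwasawa substitution $g\mapsto\gamma g$ and the homogeneity $W^d(\wtilde n y,w_l,\mu,\psi_{\sgn n}) = p_{\rho+\mu^{w_l}}(\wtilde n)\cdots$ from \eqref{eq:WhittNFE} is the bookkeeping that produces the stated $y_1^{3/2+\epsilon}y_2^{\epsilon}(y_2+y_2^{-1})(y_1^2 y_2)^{-B}$; the $\epsilon$'s and the factor $(y_2+y_2^{-1})$ are artifacts of being slightly wasteful in choosing $A_1,A_2$ and in the $\gamma$-sum. Throughout, since Corollary \ref{cor:WhitBd} gives polynomial dependence on $\norm{\mu}$, and the degenerate evaluations involve only ratios of gamma functions at arguments with bounded real part, the dependence on $\Im(\mu)$ is polynomial, as claimed.
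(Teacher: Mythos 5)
Your proposal follows essentially the same route as the paper's proof: the constant term is a finite sum of power functions, the intermediate pieces $\phi_1,\phi_2$ are controlled by decay estimates for the classical Whittaker function $\mathcal{W}_m(y,u)$, and the non-degenerate piece $\phi_3$ is handled by Corollary \ref{cor:WhitBd} applied at $\wtilde{n}\,y(\gamma g)$ together with the explicit Iwasawa coordinates of $\gamma xyk$ (namely $y_1^*=y_1\sqrt{y_2^2a^2+(ax_2+b)^2}$, $y_2^*=y_2/(y_2^2a^2+(ax_2+b)^2)$) to make the $\gamma$- and $n$-sums converge; this is exactly what the paper does, including the asymmetric choice of $A_1,A_2$ that produces the stated exponents. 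One correction to your enumeration: for $n=(n_1,0)$ with $n_1\ne0$ the Weyl elements with $\delta_{n,w}=1$ are $w_3$, $w_5$ \emph{and} $w_l$, since $\delta_{n,w_l}=1$ for every $n$; so $\phi_1$ also contains long-element terms at a degenerate character, and likewise $\phi_2$ contains $w_2$, $w_4$ and $w_l$ terms. This does not damage the argument, because \eqref{eq:DegenCharWhitt} expresses those terms as products of the same diagonal matrices $\mathcal{W}^d$, so the identical estimates apply, but as written your list of contributing terms is incomplete. Finally, your claim that $\mathcal{W}_m(y,u)\ll(\abs{y})^{-B}$ ``for any $B$'' needs the caveat $B>\Max{0,-\Re(u)}$ (the function does not vanish to arbitrary order as $y\to0$); this restriction is precisely what the hypothesis $B>\Max{c,A}+1$ in the statement is accommodating, and the paper obtains it by integration by parts in the defining integral together with a Phragm\'en--Lindel\"of argument for uniformity in $u$.
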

\begin{proof}
For $\gamma=\SmallMatrix{*&*\\a&b}$, the $y$ component of the Iwasawa decomposition of $\gamma xyk=x^* y^* k^*$ has coordinates
\begin{align*}
	y_1^* =& y_1 \sqrt{y_2^2 a^2+(a x_2+b)^2}, \\
	y_2^* =& \frac{y_2}{y_2^2 a^2+(a x_2+b)^2}.
\end{align*}

Entries of $\phi_3(xyk)$ are bounded by sums of at most $2d+1$ terms of the form
\[ \sum_{\gamma\in U(\Z)\backslash SL(2,\Z)} \sum_{\substack{n \in \Z \times \N\\ n_1 \ne 0}} \abs{n_1 n_2}^{A-1} \abs{W^d_{m',m}(\wtilde{n} y^* k^*, w_l, \mu, \psi_{\sgn(n)})}. \]
Taking $A_1=\frac{1}{2}+\epsilon+2B$, $A_2=B$, $B>\Max{c,A}+1$ in corollary \ref{cor:WhitBd} and applying
\[ (1+y_2^{-1})^2 (y_2^2 a^2+(a x_2+b)^2) \ge a^2+(a x_2+b)^2, \]
the previous display is bounded by
\[ y_1^{\frac{1}{2}-\epsilon-2B} y_2^{1-B} (1+y_2^{-1})^{2+\epsilon} \sum_{\substack{a,b\in\Z\\(a,b)=1}} (a^2+(a x_2+b)^2)^{-1-\epsilon} \sum_{\substack{n \in \Z \times \N\\ n_1 \ne 0}} \abs{n_1}^{\frac{1}{2}+\epsilon} \frac{\abs{n_1 n_2}^A}{(n_1^2 n_2)^B}. \]
The $a,b$ sum converges absolutely, and we will see this in more detail later.

The Whittaker functions in $\wtilde{\phi}_{00}(g)$ are just power functions, so the bound there is obvious.

For the intermediate terms, we must deal with the classical Whittaker function.
The function $W_{\alpha,\beta}(y)$ is known to be entire in $\alpha,\beta$, and this implies the same for the function $\mathcal{W}_m(y,u)$.
There are no simple, uniform bounds in the usual references, but we note that applying the definition \eqref{eq:WintOld} for $\Re(u) > \epsilon > 0$, the functional equation \eqref{eq:ClassWhittFE} for $\Re(u) < -\epsilon$ and Phragm\'en-Lindel\"of in the middle yields
\[ \mathcal{W}_m(y,u) \ll \paren{\frac{\abs{y}}{1+\abs{u}}}^{\Min{0,\Re(u)}+o(1)}, \]
for $m$ and $\abs{\Re(u)}$ in some fixed compact set.
Note that we are always away from the poles of $\Gamma^d_{\mathcal{W},m,m}$, so these do not interfere with our analysis.
Multiple integration by parts similarly yields $\mathcal{W}_m(y,u) \ll_{m,u} \abs{y}^{-B}$ for any $B>\Max{0,-\Re(u)}$.
Here we are making liberal use of Stirling's formula.

Armed with these bounds on the $\mathcal{W}_m(y,u)$ function, the proofs for $\phi_1$ and $\phi_2$ are the same as for $\phi_3$.

\end{proof}

\section{The minimal parabolic Eisenstein series}
\label{sect:MinPara}
We define the minimal parabolic Eisenstein series attached to $\WigDMat{d}$ by
\begin{align}
\label{eq:MinParaEisenDef}
	E^d(g, \mu) = \sum_{\gamma\in U(\Z)\backslash \Gamma} I^d(\gamma g, \mu),
\end{align}
using the matrix-valued function $I^d(\cdot,\mu)$ defined by \eqref{eq:IdDef}.
Since $\WigDMat{d}(k)$ lies in $SO(2d+1,\C)$, its entries satisfy $\abs{\WigD{d}{m'}{m}(k)} \le 1$, and so the absolute convergence of this series on $\Re(\mu_1-\mu_2),\Re(\mu_2-\mu_3)>1$ follows from the analysis of Bump \cite[Ch. 7]{Bump01}, but we will show this directly.

\subsection{Absolute convergence}
We need a weak bound on the $y$ dependence of the Eisenstein series in the region of absolute convergence.
This follows from the fact that
\begin{align*}
	\abs{E^d_{m',m}(xyk,\mu)} \le& E(xy, \Re(\mu)),
\end{align*}
where $E(g,\mu) = E^0_{0,0}(g,\mu)$ is the spherical Eisenstein series.
Now using the Pl\"ucker coordinates in the Iwasawa decomposition, we have
\begin{align*}
	E(xy, \mu) =& p_{\rho+\mu}(y) \sum_{\substack{A, B, C\in \Z^2\\(A_i,B_i,C_i)=1\\A_1 C_2+B_1 B_2+C_1 A_2=0}} \xi_1^{\frac{-1+\mu_3-\mu_2}{2}} \xi_2^{\frac{-1+\mu_2-\mu_1}{2}},
\end{align*}
where
\begin{align*}
	\xi_1 =& y_1^2 y_2^2 A_1^2+y_1^2 (A_1 x_2+B_1)^2+(A_1 x_3+B_1 x_1+C_1)^2, \\
	\xi_2 =& y_1^2 y_2^2 A_2^2+y_2^2 (A_2 x_1-B_2)^2+(A_2 (x_1 x_2-x_3)-B_2 x_2+C_2)^2.
\end{align*}
We separate the $y$ dependence by the inequalities
\begin{align*}
	(1+y_1^{-1})^2 (1+y_2^{-1})^2 \xi_1 \ge& A_1^2+(A_1 x_2+B_1)^2+(A_1 x_3+B_1 x_1+C_1)^2, \\
	(1+y_1^{-1})^2 (1+y_2^{-1})^2 \xi_2 \ge& A_2^2+(A_2 x_1-B_2)^2+(A_2 (x_1 x_2-x_3)-B_2 x_2+C_2)^2.
\end{align*}

To prove absolute convergence, we may drop the condition $A_1 C_2+B_1 B_2+C_1 A_2=0$, and replace $(A_i, B_i, C_i)=1$ with $A_i B_i C_i \ne 0$, so it suffices to prove convergence of
\[ \sum_{\substack{a \in \Z^3\\a_1 a_2 a_3 \ne 0}} (a_1^2+(a_1 x_2+a_2)^2+(a_1 x_3+a_2 x_1+a_3)^2)^{-1-c}, \qquad c > 0. \]
For $a_1 \ne 0$, we may compare with the integral
\[ \int_1^\infty \int_{-\infty}^\infty \int_{-\infty}^\infty (a_1^2+(a_1 x_2+a_2)^2+(a_1 x_3+a_2 x_1+a_3)^2)^{-1-c} da_3 \, da_2 \, da_1 < \infty, \]
(more precisely: for fixed $a_1$ and $a_2$, the $a_3$ summand changes from increasing to decreasing at exactly one point, which may be analyzed separately, and similarly for two and three points in the $a_2$ and $a_1$ sums, respectively) and the case $a_1 = 0$ follows similarly by splitting on $a_2=0$.

Thus
\begin{align*}
	E(xy, \mu) \ll_\eta& p_{\rho+\Re(\mu)}(y) (1+y_1^{-1})^{2+\Re(\mu_1-\mu_3)} (1+y_2^{-1})^{2+\Re(\mu_1-\mu_3)},
\end{align*}
and in general,
\begin{align}
\label{eq:TrivMinEisenBd}
	E^d_{m',m}(xyk,\mu) \ll_\eta y_1^{1-\Re(\mu_3)} y_2^{1+\Re(\mu_1)} (1+y_1^{-1})^{2+\Re(\mu_1-\mu_3)} (1+y_2^{-1})^{2+\Re(\mu_1-\mu_3)},
\end{align}
on the region of absolute convergence, $\Re(\mu_1-\mu_2),\Re(\mu_2-\mu_3)>1+\eta$, $\eta>0$.

\subsection{Fourier coefficients}
\label{sect:MinEisenFour}
Applying the Bruhat decomposition as in \cite[Prop. 10.3.6]{Gold01}, the Fourier coefficients of the Eisenstein series are given by
\begin{align*}
	\rho_{E^d}(n,g,\mu) :=& \int_{U(\Z)\backslash U(\R)} E^d(ug,\mu) \wbar{\psi_n(u)} du \\
	=& \sum_{w\in W} \delta_{n,w} \sum_{v\in V} \sum_{c_1,c_2\in\N} p_{\rho+\mu}(c) \sum_{bcwb'\in U(\Z)\backslash \Gamma/V\wbar{U}_w(\Z)} \psi_n^v(b') \int_{\wbar{U}_w(\R)} I^d(wuvg, \mu) \wbar{\psi_n^v(u)} du \\
	=& \sum_{w\in W} \delta_{n,w} \sum_{v\in V} \zeta(w, \mu, \psi_n^v) W^d(vg,w,\mu,\psi_n^v),
\end{align*}
where
\[ \zeta(w, \mu, \psi_n^v) := \sum_{c_1,c_2\in\N} p_{\rho+\mu}(c) \sum_{bcwb'\in U(\Z)\backslash \Gamma/V\wbar{U}_w(\Z)} \psi_n^v(b'). \]

Of course, the Fourier coefficients satisfy
\[ \rho_{E^d}(n,xyk,\mu) = \psi_n(x) \rho_{E^d}(n,y,\mu) \WigDMat{d}(k), \]
so it suffices to examine
\begin{align*}
	\rho_{E^d}(n,y,\mu) =& \sum_{w\in W} \delta_{n,w} \sum_{v\in V} \zeta(w, \mu, \psi_n^v) W^d(vy,w,\mu,\psi_n^v) \\
	=& \sum_{w\in W} \delta_{n,w} \sum_{v\in V} \zeta(w, \mu, \psi_n^v) \WigDMat{d}(wvw^{-1}) W^d(y, w, \mu, \psi_n).
\end{align*}

Bump \cite[Ch 7]{Bump01} has computed the function $\zeta(w, \mu, \psi_n^v)$ very explicitly.
Define
\[ \zeta_E(\mu) = \prod_{j<k} \zeta(1+\mu_j-\mu_k), \]
\[ \Gamma_\R(s) = \pi^{-s/2} \Gamma\paren{\tfrac{s}{2}}, \]
and
\[ \sigma_s(n) = \sum_{a|n} a^s. \]
Let $\sigma_{s_1, s_2}(n_1, n_2)$ be the multiplicative function defined by
\begin{align}
\label{eq:MinHecke}
	\sigma_{s_1, s_2}(p^\alpha, p^\beta) = p^{-s_2\alpha} S_{\alpha,\beta}\paren{p^{s_1+s_2},p^{s_2}},
\end{align}
for prime numbers $p$, using \eqref{eq:SchurDef}.
Finally, let
\begin{align*}
	\Gamma_E(w,\mu) =& \piecewise{
1 & \If w=I, \\
\frac{\Gamma_\R\paren{1-\mu_1+\mu_2}}{\Gamma_\R\paren{\mu_1-\mu_2}} & \If w=w_2, \\[5pt]
\frac{\Gamma_\R\paren{1-\mu_2+\mu_3}}{\Gamma_\R\paren{\mu_2-\mu_3}} & \If w=w_3, \\[5pt]
\frac{\Gamma_\R\paren{1-\mu_2+\mu_3} \Gamma_\R\paren{1-\mu_1+\mu_3}}{\Gamma_\R\paren{\mu_2-\mu_3} \Gamma_\R\paren{\mu_1-\mu_3}} & \If w=w_4, \\[5pt]
\frac{\Gamma_\R\paren{1-\mu_1+\mu_2} \Gamma_\R\paren{1-\mu_1+\mu_3}}{\Gamma_\R\paren{\mu_1-\mu_2} \Gamma_\R\paren{\mu_1-\mu_3}} & \If w=w_5, \\[5pt]
\frac{\Gamma_\R\paren{1-\mu_1+\mu_2} \Gamma_\R\paren{1-\mu_2+\mu_3} \Gamma_\R\paren{1-\mu_1+\mu_3}}{\Gamma_\R\paren{\mu_1-\mu_2} \Gamma_\R\paren{\mu_2-\mu_3} \Gamma_\R\paren{\mu_1-\mu_3}} & \If w=w_l.}
\end{align*}

Then for $n_1 n_2 \ne 0$,
\begin{align}
\label{eq:MinEisenNDG}
	\zeta_E(\mu) \zeta(w_l, \mu, \psi_n^v) = \sigma_{\mu_2-\mu_1,\mu_3-\mu_2}(\abs{n_1}, \abs{n_2}),
\end{align}
for $n_1=n_2=0$ and $w\in W$,
\begin{align}
\label{eq:MinEisenDG1}
	\zeta_E(\mu) \zeta(w, \mu, \psi_n^v) = \Gamma_E(w,\mu) \zeta_E(\mu^w),
\end{align}
for $n_1=0, n_2\ne 0$ and $w\in\set{w_2,w_4,w_l}$,
\begin{align}
\label{eq:MinEisenDG2}
	\zeta_E(\mu) \zeta(w, \mu, \psi_n^v) = \Gamma_E(w w_2,\mu) \zeta(1+\mu^w_2-\mu^w_3)\zeta(1+\mu^w_1-\mu^w_3)\sigma_{\mu^w_1-\mu^w_2}(\abs{n_2}),
\end{align}
and for $n_1\ne 0, n_2=0$ and $w\in\set{w_3,w_5,w_l}$,
\begin{align}
\label{eq:MinEisenDG3}
	\zeta_E(\mu) \zeta(w, \mu, \psi_n^v) = \Gamma_E(w w_3,\mu) \zeta(1+\mu^w_1-\mu^w_2) \zeta(1+\mu^w_1-\mu^w_3) \sigma_{\mu^w_2-\mu^w_3}(\abs{n_1}).
\end{align}
This is a somewhat complicated change of notation from \cite{Bump01}:
First, we use the Langlands parameters $\mu$ in place of $\nu$; the conversion is $3\nu=(1+\mu_2-\mu_3,1+\mu_1-\mu_2)$.
Second, we have applied the functional equation to the zeta factors as necessary, so the dependence there may be expressed in terms of the Weyl action; this results in the gamma factors listed above (the poles of the gamma factors are then cancelled by the trivial zeros of the zeta function).
Third, by leaving $\psi_n$ in the Whittaker function, we have absorbed a factor $p_{\mu^w}(\wtilde{n})$ into the Fourier coefficients $\zeta(w, \mu, \psi_n^v)$.
It is simplest to compare $\zeta(w, \mu, \psi_n^v)$ to the equation at the bottom of his page 111, in the long-element case; note that he has absorbed the $\zeta(3\nu_1) \zeta(3\nu_2)$ by dropping $(A_1, B_1, C_1)=(A_2,B_2,C_2)=1$.

Notice that $\zeta(w, \mu, \psi_n^v)$ is independent of $v$, so
\begin{align}
\label{eq:EisenFourier}
	\rho_{E^d}(n,y,\mu) =& \abs{V} \sum_{w\in W} \delta_{n,w} \zeta(w, \mu, \psi_n) \Sigmachi{d}{++} W^d(y, w, \mu, \psi_n).
\end{align}

The Fourier-Whittaker coefficients in the normalization of \eqref{eq:FWCoefsDef} are then
\[ \rho_{E^d}(w, n,\mu) = \abs{V} \zeta(w, \mu, \psi_n) p_{-\mu^w}(\wtilde{n}) \Sigmachi{d}{++}. \]

\subsection{Initial continuation of the Eisenstein series}
Combining the explicit evaluation of the Fourier-Whittaker coefficients in the previous section with the results of section \ref{sect:FourierExp}, we see that the Eisenstein series has analytic continuation to all $\mu$, unless the Fourier coefficients themselves have poles.
Having never seen the definition \eqref{eq:SchurDef} of the multiplicative function $\sigma_{s_1,s_2}(p^\alpha, p^\beta)$, one might worry about poles there, but as a function of $p^{s_1}, p^{s_2}$, for fixed $\alpha, \beta$, these are polynomials, and hence entire.

Thus we are left to consider \eqref{eq:MinEisenDG1}-\eqref{eq:MinEisenDG3}.
Except at $\mu_i-\mu_j=1, i<j$, the poles of $\Gamma_E(w,\mu)$ are always cancelled by the zeta functions on the right-hand sides, but the non-trivial zeros of the zeta functions on the left-hand sides force us to restrict to a zero-free region:
If $1>c>0$ is a constant such that $\zeta(1+s) \ne 0$ on
\begin{align}
\label{eq:FormalZeroFree}
	\Re(s) > -\frac{c}{\log\paren{1+\abs{\Im(s)}}},
\end{align}
(c.f. \cite[Theorem 5.10]{IK})
then the Fourier expansion converges to a holomorphic function of $\mu$ on
\[ \Re(\mu_i-\mu_j) > -\frac{c}{\log\paren{1+\abs{\Im(\mu_i-\mu_j)}}}, \qquad i < j, \]
except for possible simple poles at $\mu_i-\mu_j=1, i<j$.

We will discuss the poles in greater detail in section \ref{sect:ResSpec}.

\subsection{Functional equations}
\label{sect:minimalFEs}
\subsubsection{The functional equations of the constant terms}
Define
\[ M^d(w, \mu) = \Gamma_E(w,\mu) \frac{\zeta_E(\mu^w)}{\zeta_E(\mu)} \Sigmachi{d}{++} W^d(I, w, \mu, \psi_{00}), \]
so that the $1,1,1$ constant term of the minimal parabolic Eisenstein series is
\[ \rho_{E^d}(0,y,\mu) = \abs{V} \sum_{w\in W} p_{\rho+\mu^w}(y) M^d(w,\mu). \]

The $1,1,1$ constant term satisfies the functional equations
\begin{align}
\label{eq:111FEs}
	M^d(w', \mu) \rho_{E^d}(0,y,\mu^{w'}) =& \rho_{E^d}(0,y,\mu)
\end{align}
on the intersection of the zero-free regions of the relevant zeta functions:
\begin{align}
\label{eq:FEset}
	\mathcal{E} = \set{\mu\setdiv \abs{\Re(\mu_j-\mu_k)} < \frac{c}{\log\paren{1+\abs{\Im(\mu_j-\mu_k)}}}}.
\end{align}
We now proceed to verify these functional equations; in terms of the $M^d(w,\mu)$ matrix, they are equivalent to
\begin{align}
\label{eq:MmatFEs}
	M^d(w', \mu) M^d(w,\mu^{w'}) = M^d(w'w,\mu).
\end{align}
The sign dependence of the Whittaker function \eqref{eq:WhittVFE} tells us that
\[ W^d(I, w, \mu, \psi_{00}) \Sigmachi{d}{++} = \Sigmachi{d}{++} W^d(I, w, \mu, \psi_{00}), \]
and the quotient of zeta functions is trivial to deal with, so we may reduce to checking
\[ \Gamma_E(w',\mu) \Gamma_E(w,\mu^{w'}) \Sigmachi{d}{++} W^d(I, w', \mu, \psi_{00}) W^d(I, w, \mu^{w'}, \psi_{00}) = \Gamma_E(w'w,\mu) \Sigmachi{d}{++} W^d(I, w'w, \mu, \psi_{00}). \]
Furthermore, using the known generators and relations for the symmetric group on three letters, it suffices to check this for the following triples $(w',w,w'w)$:
\[\begin{array}{cccccc}
	(w_2, w_2, I), & (w_3, w_3, I), & (w_2, w_3, w_5), & (w_3, w_2, w_4), & (w_2, w_4, w_l), & (w_3, w_5, w_l).
\end{array} \]
By \eqref{eq:WDtildek} and \eqref{eq:W0v}, we know that $\mathcal{W}^d(0, u)$ commutes with $\WigDMat{d}(w_2)$ and $\WigDMat{d}(v_{\pm\pm})$; the presence of $\Sigmachi{d}{++}$ means we may ignore the latter entirely.
Lastly, by the explicit description of $\Sigmachi{d}{++}$ in \eqref{eq:Sigmadplusplus}, we need only consider the even rows (and columns) of the above matrix relation.

For the $(w_2,w_2,I)$ case, after writing out the explicit form of $W^d$ in \eqref{eq:DegenWhitt}, we may freely conjugate the factors $\WigDMat{d}(w_2)$ to the left, whereupon they cancel.
Then it is trivial to verify that the even entries of the diagonal matrix
\begin{align}
\label{eq:mathcalWFE}
	\frac{\Gamma_\R\paren{1-\mu_1+\mu_2}}{\Gamma_\R\paren{\mu_1-\mu_2}} \frac{\Gamma_\R\paren{1-\mu_2+\mu_1}}{\Gamma_\R\paren{\mu_2-\mu_1}} \mathcal{W}^d(0, \mu_1-\mu_2) \mathcal{W}^d(0, \mu_2-\mu_1)
\end{align}
are all equal to one.
The case $(w_3, w_3, I)$ is similarly trivial, after realizing the commutativity relations
\[ \Sigmachi{d}{++} \WigDMat{d}(w) = \WigDMat{d}(w) \Sigmachi{d}{++}, \]
for any Weyl group element $w$.
For the remaining cases, the two statements
\[ \Gamma_E(w',\mu) \Gamma_E(w,\mu^{w'}) = \Gamma_E(w'w,\mu), \]
and
\[ \Sigmachi{d}{++} W^d(I, w', \mu, \psi_{00}) W^d(I, w, \mu^{w'}, \psi_{00}) = \Sigmachi{d}{++} W^d(I, w'w, \mu, \psi_{00}) \]
hold independently; this is trivial to verify applying the first form of \eqref{eq:DegenCharWhitt} to $(w_2, w_4, w_l)$ and the second to $(w_3, w_5, w_l)$.

Of course Langlands \cite{Langlands02} gave a much more general and abstract argument, but given the current tools, we find the direct verification to be quite pleasant.

It is possible to show the functional equations using only the $n=0$ constant term from just the two transpositions $\mu \mapsto \mu^{w_2}$ and $\mu \mapsto \mu^{w_3}$ (which then compose to give the full group), but we would like to show the more general form $\mu \mapsto \mu^w$ directly by applying the functional equations of the partially degenerate terms $n_1=0\ne n_2$ and $n_1\ne0=n_2$ as well.
These functional equations should take a term of the form
\[ M(w',\mu) \zeta(w, \mu^{w'}, \psi_n) \Sigmachi{d}{++} W^d(y, w, \mu^{w'}, \psi_n) \]
to
\[ \zeta(w'', \mu, \psi_n) \Sigmachi{d}{++} W^d(y, w'', \mu, \psi_n), \]
for some $w'' \in W$, but the $w''$ is now determined by multiplication in a quotient group, since the delta function $\delta_{n,w}$ in the Fourier-Whittaker expansion restricts the terms involved.
We may drop the occurances of the Riemann zeta function from $\zeta(w'', \mu, \psi_n)$ since it will turn out, say for $n_1=0$ that $w''\in\set{w'wI,w'ww_2}$ (so $w'' \equiv w'w$ modulo $w_2$ on the right), and we may write
\[ \zeta(1+\mu^w_2-\mu^w_3)\zeta(1+\mu^w_1-\mu^w_3) = \zeta(1+\mu^{wI}_1-\mu^{wI}_3) \zeta(1+\mu^{ww_2}_1-\mu^{ww_2}_3). \]
Also, we know (again for $n_1=0$)
\begin{align*}
	&\Gamma_E(w',\mu) \Gamma_E(ww_2, \mu^{w'}) \Sigmachi{d}{++} W^d(I, w', \mu, \psi_{00}) W^d(I, w w_2, \mu^{w'}, \psi_{00}) \\
	&= \Gamma_E(w'ww_2, \mu) \Sigmachi{d}{++} W^d(I, w'w w_2, \mu, \psi_{00}),
\end{align*}
so our functional equations may be expressed in the form
\begin{align*}
	& \Gamma_e(w'ww_2, \mu) \sigma_{\mu^{w'w}_1-\mu^{w'w}_2}(\abs{n_2}) \Sigmachi{d}{++} W^d(I, w w_2, \mu^{w'}, \psi_{00})^{-1} W^d(y, w, \mu^{w'}, \psi_n) \\
	& = \Gamma_E(w''w_2, \mu) \sigma_{\mu^{w''}_1-\mu^{w''}_2}(\abs{n_2}) \Sigmachi{d}{++} W^d(I, w'w w_2, \mu, \psi_{00})^{-1} W^d(y, w'', \mu, \psi_n),
\end{align*}
when $n_1=0\ne n_2$, and
\begin{align*}
	& \Gamma_e(w'ww_3, \mu) \sigma_{\mu^{w'w}_2-\mu^{w'w}_3}(\abs{n_1}) \Sigmachi{d}{++} W^d(I, w w_3, \mu^{w'}, \psi_{00})^{-1} W^d(y, w, \mu^{w'}, \psi_n) \\
	&= \Gamma_E(w''w_3, \mu) \sigma_{\mu^{w''}_2-\mu^{w''}_3}(\abs{n_1}) \Sigmachi{d}{++} W^d(I, w'w w_3, \mu, \psi_{00})^{-1} W^d(y, w'', \mu, \psi_n),
\end{align*}
when $n_1 \ne 0 = n_2$.
Since we have already shown the matrix $M(w,\mu)$ factors over the generators, we need only consider the functional equations coming from the Weyl elements $w'=w_2,w_3$.
Thus it suffices to show the functional equations for the following triples $(w', w, w'')$:
\[ (w_2, w_2, w_2), (w_2, w_4, w_l), (w_3, w_2, w_4), n_1=0\ne n_2, \]
\[ (w_3, w_3, w_3), (w_3, w_5, w_l), (w_2, w_3, w_5), n_1 \ne 0 = n_2. \]
The case $(w_3, w_4,w_2)$, for example, follows from $(w_3, w_2, w_4)$ and $M^d(w_3,\mu) M^d(w_3,\mu^{w_3}) = \Sigmachi{d}{++}$.

In the cases
\[ (w_2, w_4, w_l), (w_3, w_2, w_4), n_1=0\ne n_2, \]
\[ (w_3, w_5, w_l), (w_2, w_3, w_5), n_1 \ne 0 = n_2, \]
$w''$ is given by multiplication in the Weyl group, $w''=w'w$, so the $n$ and $y$ dependence transforms trivially, and we may reduce to the simpler equations
\[ \Sigmachi{d}{++} W^d(I, w w_2, \mu^{w'}, \psi_{00})^{-1} W^d(I, w, \mu^{w'}, \psi_{yn}) = \Sigmachi{d}{++} W^d(I, w''w_2, \mu, \psi_{00})^{-1} W^d(I, w'', \mu, \psi_{yn}), \]
when $n_1=0\ne n_2$, and
\[ \Sigmachi{d}{++} W^d(I, w w_3, \mu^{w'}, \psi_{00})^{-1} W^d(I, w, \mu^{w'}, \psi_{yn}) = \Sigmachi{d}{++} W^d(I, w''w_3, \mu, \psi_{00})^{-1} W^d(I, w'', \mu, \psi_{yn}), \]
when $n_1 \ne 0 = n_2$.
These cases may be verified directly, as in the $n=0$ case.

It remains to check
\[ (w_2, w_2, w_2), n_1=0\ne n_2, \]
\[ (w_3, w_3, w_3), n_1 \ne 0 = n_2. \]
These require an application of the functional equation of the classical Whittaker function \eqref{eq:ClassWhittFE} in combination with $\sigma_{-s}(n) = n^{-s}\sigma_s(n)$.
In the first case $(w_2,w_2,w_2), n_1=0$, it is sufficient to show
\[ \Sigmachi{d}{++} \paren{\frac{\Gamma_\R\paren{1-\mu_1+\mu_2}}{\Gamma_\R\paren{\mu_1-\mu_2}} \mathcal{W}^d(0, \mu_1-\mu_2) \Dtildek{d}{i}} = \Sigmachi{d}{++} \paren{\pi^{\mu_1-\mu_2} \Gamma^d_\mathcal{W}(\mu_2-\mu_1,\pm1)}, \]
and checking the even entries of the two diagonal matrices in parentheses verifies the result.
A similar argument handles the remaining case.

We note one last property of the $M(s,\mu)$ matrix:
\[ \trans{\wbar{M^d(w,\mu)}} M^d(w,\mu) = \Sigmachi{d}{++} \text{ for } \Re(\mu)=0. \]
To see this, it is sufficient to check $w=w_2,w_3$, and recognize that $\wbar{\mu} = -\mu$ for $\Re(\mu)=0$.

\subsubsection{The functional equations of the Eisenstein series}
\label{sect:MinEisenFEs}
For some $w\in W$, consider the difference of the two Eisenstein series
\[ f(g) := E^d(g,\mu)-M^d(w,\mu) E^d(g,\mu^w), \]
defined on the zero-free region \eqref{eq:FEset}.

Define the inner product on $\Gamma\backslash G$ in the usual manner,
\[ \innerprod{f_1,f_2} = \int_{\Gamma\backslash G} f_1(g) \wbar{\trans{f_2(g)}} dg, \]
then the $L^2$ norm of $f$ may be written
\[ \norm{f}_2^2 = \innerprod{f,f} = I_1(\mu) - I_2(\mu), \]
where
\begin{align*}
	I_1(\mu') = \innerprod{E^d(g,\mu'), f}, \qquad I_2(\mu') = \innerprod{M^d(w,\mu') E^d(g,(\mu')^w), f}.
\end{align*}

We may take the fundamental domain for $\Gamma\backslash G$ to lie inside a Siegel domain \cite[Prop. 1.3.2]{Gold01}, so that $y_1, y_2 \gg 1$ in the inner product, and since we have already shown the degenerate Fourier coefficients of $f$ are zero, proposition \ref{eq:FourierExpBound} implies that $f$ has super-polynomial decay as $y_i \to \infty$.
Then applying proposition \ref{eq:FourierExpBound} to each Eisenstein series separately shows that $I_1$ and $I_2$ converge to meromorphic functions on $\Re(\mu'_j-\mu'_k) \ge 0$ and $\Re({\mu'}_j^{w^{-1}}-{\mu'}_k^{w^{-1}}) \ge 0$, for each $j < k$, respectively.

By analytic continuation, we may investigate, say, $I_1(\mu')$ in the region of absolute convergence $\Re(\mu'_1-\mu'_2), \Re(\mu'_2-\mu'_3) > 1$.
On this region, the trivial bound \eqref{eq:TrivMinEisenBd} on the Eisenstein series implies that the combined sum-integral of
\[ I_1(\mu') = \int_{\Gamma\backslash G} \sum_{\gamma\in U(\Z)\backslash\Gamma} I^d(\gamma g, \mu') \wbar{\trans{f(g)}} dg \]
converges absolutely, so we may apply unfolding:
\[ I_1(\mu') = \int_{Y^+} p_{\rho+\mu'}(y) \int_{U(\Z)\backslash U(\R)} \wbar{\trans{f(xy)}} dx \, dy, \]
but the $x$ integral is zero since this is the $(0,0)$ Fourier coefficient.
Thus we conclude the $I_1(\mu')=0$ for all $\mu'$ by analytic continuation.

Similarly, $I_2(\mu')=0$, so we conclude that $\norm{f}_2^2=0$, and hence $M^d(w,\mu) E^d(g,\mu^w)$ gives the meromorphic continuation of $E^d(g,\mu)$ to $\Re({\mu'}_j^{w^{-1}}-{\mu'}_k^{w^{-1}}) \ge 0$, $j < k$.

We have proved
\begin{prop}
\label{prop:MinEisenFEs}
	The matrix-valued Eisenstein series $\zeta_E(\mu) E^d(g,\mu)$ has holomorphic continuation to all $\mu$, except for possible simple poles at $\mu_i-\mu_j=\pm 1$, with functional equations given by
	\[ E^d(g,\mu) = M^d(w,\mu) E^d(g,\mu^w), \qquad w \in W. \]
	The matrices $M^d(w,\mu)$ are orthogonal in the sense that
	\[ \trans{\wbar{M^d(w,\mu)}} M^d(w,\mu) = \Sigmachi{d}{++} \text{ for } \Re(\mu)=0. \]
\end{prop}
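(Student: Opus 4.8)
The plan is to combine the explicit Fourier--Whittaker expansion of Section~\ref{sect:MinEisenFour}, the constant-term functional equations of Section~\ref{sect:minimalFEs}, and an $L^2$-norm (unfolding) argument. First I would record the initial continuation: substituting the evaluations \eqref{eq:MinEisenNDG}--\eqref{eq:MinEisenDG3} of the coefficient sums $\zeta_E(\mu)\zeta(w,\mu,\psi_n^v)$ into the Fourier expansion \eqref{eq:EisenFourier} and invoking Proposition~\ref{eq:FourierExpBound}, the matrix-valued series $\zeta_E(\mu)E^d(g,\mu)$ converges absolutely and is holomorphic, except for simple poles at $\mu_i-\mu_j=1$, on the thin widening of the positive Weyl chamber cut out by $\Re(\mu_i-\mu_j) > -c/\log(1+\abs{\Im(\mu_i-\mu_j)})$ for $i<j$. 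Indeed the non-degenerate coefficients $\sigma_{\mu_2-\mu_1,\mu_3-\mu_2}(\abs{n_1},\abs{n_2})$ are entire in $\mu$ and polynomially bounded in $n_1,n_2$, while the degenerate coefficients \eqref{eq:MinEisenDG1}--\eqref{eq:MinEisenDG3} carry only the poles of their $\Gamma_E(w,\mu)$ factors, which off $\mu_i-\mu_j=1$ are cancelled by the accompanying zeta functions.

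Next I would prove the functional equations on the zero-free region $\mathcal{E}$ of \eqref{eq:FEset}. Fix $w\in W$ and put $f(g)=E^d(g,\mu)-M^d(w,\mu)E^d(g,\mu^w)$, both terms being defined on the $W$-stable set $\mathcal{E}$. The constant-term relations \eqref{eq:111FEs}, equivalently the cocycle identity \eqref{eq:MmatFEs}, together with the companion identities verified over the Weyl triples $(w',w,w'')$ in Section~\ref{sect:minimalFEs}, force every degenerate Fourier coefficient of $f$ --- the $(0,0)$ one and those with $n_1=0\neq n_2$ or $n_1\neq0=n_2$ --- to vanish. Hence the Fourier expansion of $f$ reduces to its non-degenerate part, which by Proposition~\ref{eq:FourierExpBound} decays super-polynomially in a Siegel domain; in particular $f\in L^2(\Gamma\backslash G)$ and
\[ \norm{f}_2^2 = \innerprod{f,f} = I_1(\mu) - I_2(\mu), \]
where $I_1(\mu') = \innerprod{E^d(g,\mu'),f}$ and $I_2(\mu') = \innerprod{M^d(w,\mu')E^d(g,(\mu')^w),f}$. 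Each $I_j$ extends meromorphically --- apply Proposition~\ref{eq:FourierExpBound} to the Eisenstein factor --- so I may compute it in the region of absolute convergence of that factor, where the trivial bound \eqref{eq:TrivMinEisenBd} legitimates unfolding; $I_1(\mu')$ then becomes $\int_{Y^+}p_{\rho+\mu'}(y)\int_{U(\Z)\backslash U(\R)}\wbar{\trans{f(xy)}}\,dx\,dy$, whose inner integral is the vanishing $(0,0)$-Fourier coefficient of $f$. Thus $I_1\equiv 0$, and likewise $I_2\equiv 0$, by analytic continuation, so $\innerprod{f,f}=0$ and $f\equiv0$: that is, $E^d(g,\mu)=M^d(w,\mu)E^d(g,\mu^w)$ on $\mathcal{E}$. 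As $w$ ranges over $W$ the right-hand side $M^d(w,\mu)E^d(g,\mu^w)$ is meromorphic on the region where $\mu^w$ lies in the chamber of the first step, with poles only among the $W$-images of $\mu_i-\mu_j=1$; these regions cover all $\mu$ and each contains $\mathcal{E}$, so they patch to give the holomorphic continuation of $\zeta_E(\mu)E^d(g,\mu)$ to all $\mu$ (simple poles at most at $\mu_i-\mu_j=\pm1$) and all the stated functional equations at once.

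For the orthogonality, on $\Re(\mu)=0$ one has $\wbar{\mu}=-\mu$, so it suffices to verify $\trans{\wbar{M^d(w,\mu)}}M^d(w,\mu)=\Sigmachi{d}{++}$ for the two involutions $w=w_2$ and $w=w_3$; this is immediate from the explicit diagonal forms of $M^d(w_j,\mu)$ in terms of $\mathcal{W}^d(0,\cdot)$ and the gamma quotients $\Gamma_E(w_j,\mu)$, using the reflection formula and the idempotency $\Sigmachi{d}{++}=\Sigmachi{d}{++}\Sigmachi{d}{++}$. For a general $w$ I would then induct on the length of a reduced word in $w_2,w_3$, using \eqref{eq:MmatFEs} --- still valid since $\Re(\mu^{w'})=0$ when $\Re(\mu)=0$ --- together with the commutation of $\Sigmachi{d}{++}$ with every $M^d(w,\cdot)$.

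The hard part will be the vanishing of the \emph{degenerate} Fourier coefficients of $f$ in the second step. The $(0,0)$-part is the cocycle identity \eqref{eq:MmatFEs}, which after restricting to the even rows picked out by $\Sigmachi{d}{++}$ and reducing to the generators $w_2,w_3$ collapses to elementary identities among gamma quotients and the values $\mathcal{W}^d(0,\cdot)$. The partially-degenerate parts are more delicate: over each Weyl triple one must keep track of the $\Gamma_E$ factors, absorb the stray Riemann zeta factors by $\sigma_{-s}(n)=n^{-s}\sigma_s(n)$ and the identity $\zeta(1+\mu^w_2-\mu^w_3)\zeta(1+\mu^w_1-\mu^w_3)=\zeta(1+\mu^{wI}_1-\mu^{wI}_3)\zeta(1+\mu^{ww_2}_1-\mu^{ww_2}_3)$, apply the classical-Whittaker functional equation \eqref{eq:ClassWhittFE}, and thread the projector $\Sigmachi{d}{++}$ through the matrix products. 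The only genuinely analytic ingredient anywhere is the classical zero-free region of $\zeta$, which is exactly why the intermediate continuation must stop at $\mathcal{E}$ before the functional equations carry it to all $\mu$.
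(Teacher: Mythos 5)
Your proposal is correct and follows the paper's own route essentially step for step: initial continuation from the explicit Fourier--Whittaker coefficients on the zero-free region, vanishing of all degenerate Fourier coefficients of $f(g)=E^d(g,\mu)-M^d(w,\mu)E^d(g,\mu^w)$ via the constant-term identities of Section~\ref{sect:minimalFEs}, the $\norm{f}_2^2=I_1-I_2$ unfolding argument with analytic continuation to conclude $f\equiv 0$, and the reduction of the orthogonality of $M^d(w,\mu)$ to the generators $w_2,w_3$ using $\wbar{\mu}=-\mu$ on $\Re(\mu)=0$. You also correctly identify that the super-polynomial decay needed to make $\innerprod{f,f}$ converge rests on the vanishing of the partially degenerate coefficients, not just the $(0,0)$ term, which is exactly why the paper verifies those additional functional equations.
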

Multiplying by $\zeta_E(\mu)$ serves to cancel the poles coming from the matrix $M^d(w,\mu)$.

Computing the Fourier coefficients of both sides, this implies the functional equations of the even Whittaker functions,
\begin{cor}
\label{cor:EvenWhittFEs}
	The matrix-valued Whittaker function $\Sigmachi{d}{++} W^d(g, w_l, \mu, \psi_{1,1})$ has analytic continuation to all $\mu$ with functional equations given by
	\[ \Sigmachi{d}{++} W^d(g, w_l, \mu, \psi_{1,1}) = \paren{\Gamma_E(w,\mu) W^d(I,w,\mu,\psi_{00})} \Sigmachi{d}{++} W^d(g, w_l, \mu^w, \psi_{1,1}), \qquad w \in W. \]
\end{cor}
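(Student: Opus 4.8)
The plan is to read Corollary \ref{cor:EvenWhittFEs} off the functional equation $E^d(g,\mu)=M^d(w,\mu)E^d(g,\mu^w)$ of Proposition \ref{prop:MinEisenFEs} by comparing non-degenerate Fourier coefficients. Applying $\rho_{E^d}((1,1),\cdot,\mu)$ to both sides (and using that $M^d(w,\mu)$ is constant in the unipotent variable) gives $\rho_{E^d}((1,1),g,\mu)=M^d(w,\mu)\rho_{E^d}((1,1),g,\mu^w)$. By \eqref{eq:EisenFourier}, since $\delta_{n,w}=1$ only for $w=w_l$ when $n_1n_2\ne 0$, the left side is $\abs{V}\zeta(w_l,\mu,\psi_{1,1})\Sigmachi{d}{++}W^d(g,w_l,\mu,\psi_{1,1})$, and \eqref{eq:MinEisenNDG} with $\sigma_{s_1,s_2}(1,1)=1$ gives $\zeta(w_l,\mu,\psi_{1,1})=1/\zeta_E(\mu)$; the $x$- and $k$-dependence matches on both sides by \eqref{eq:WhittKFE}--\eqref{eq:WhittUFE}, so it suffices to track $g$. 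Treating the right side the same way and cancelling the common factor $\abs{V}/\zeta_E(\mu)$ turns the identity into
\[ \Sigmachi{d}{++}W^d(g,w_l,\mu,\psi_{1,1})=\frac{\zeta_E(\mu)}{\zeta_E(\mu^w)}M^d(w,\mu)\Sigmachi{d}{++}W^d(g,w_l,\mu^w,\psi_{1,1}). \]

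Into this I would substitute the definition $M^d(w,\mu)=\Gamma_E(w,\mu)\frac{\zeta_E(\mu^w)}{\zeta_E(\mu)}\Sigmachi{d}{++}W^d(I,w,\mu,\psi_{00})$; the zeta quotients cancel, leaving $\Gamma_E(w,\mu)\Sigmachi{d}{++}W^d(I,w,\mu,\psi_{00})\Sigmachi{d}{++}W^d(g,w_l,\mu^w,\psi_{1,1})$ on the right. The commutation $\Sigmachi{d}{++}W^d(I,w,\mu,\psi_{00})=W^d(I,w,\mu,\psi_{00})\Sigmachi{d}{++}$ recorded in Section \ref{sect:minimalFEs} (a consequence of the sign dependence \eqref{eq:WhittVFE}), together with $\Sigmachi{d}{++}\Sigmachi{d}{++}=\Sigmachi{d}{++}$, collapses the two central projectors to one, yielding exactly $\paren{\Gamma_E(w,\mu)W^d(I,w,\mu,\psi_{00})}\Sigmachi{d}{++}W^d(g,w_l,\mu^w,\psi_{1,1})$, as claimed.

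For the continuation: $\abs{V}\Sigmachi{d}{++}W^d(g,w_l,\mu,\psi_{1,1})=\zeta_E(\mu)\rho_{E^d}((1,1),g,\mu)$ is the $(1,1)$-Fourier coefficient of $\zeta_E(\mu)E^d(g,\mu)$, which Proposition \ref{prop:MinEisenFEs} continues to all $\mu$ up to possible simple poles on $\mu_i-\mu_j=\pm 1$; there the residue of $E^d$ is a residual form (Section \ref{sect:ResSpec}) with vanishing non-degenerate coefficients, so the $(1,1)$-coefficient stays holomorphic — alternatively one invokes Proposition \ref{prop:WhittMellin} and Corollary \ref{cor:WhitBd} directly. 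The functional equations then spread from their initial domain to all $\mu$ by analytic continuation. I do not expect a genuine obstacle, since the substance is already contained in Proposition \ref{prop:MinEisenFEs} and the identities of Section \ref{sect:minimalFEs}; the one point needing a little care is precisely this last observation, namely that the $\zeta_E(\mu)$ normalization cancels all poles of the Fourier coefficient and that the $\mu_i-\mu_j=\pm 1$ poles of the Eisenstein series do not propagate into this non-degenerate coefficient.
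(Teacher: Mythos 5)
Your proposal is correct and is exactly the paper's argument: the paper derives this corollary in one line by "computing the Fourier coefficients of both sides" of the functional equation in Proposition \ref{prop:MinEisenFEs}, and your expansion of that computation — isolating the $w_l$ term via $\delta_{n,w}$, using $\zeta(w_l,\mu,\psi_{1,1})=1/\zeta_E(\mu)$, substituting the definition of $M^d(w,\mu)$, and collapsing the projectors via the commutation $\Sigmachi{d}{++}W^d(I,w,\mu,\psi_{00})=W^d(I,w,\mu,\psi_{00})\Sigmachi{d}{++}$ — is precisely what is meant. The analytic continuation was in any case already available from Proposition \ref{prop:WhittMellin}, as you note.
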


\subsection{The $1,1,1$ constant term of the spectral expansion}
\label{sect:111const}
Suppose $f:\Gamma\backslash G \to \C$ is Schwartz-class and orthogonal to the residues of the Eisenstein series, then we wish to expand the minimal parabolic constant term of $f$, meaning
\[ f^{111}(g) := \int_{U(\Z)\backslash U(\R)} f(ug) du, \]
into an integral of minimal parabolic Eisenstein series.

We may expand the $K$ part of $f^{111}$ using \eqref{eq:Kexpand}, and applying Mellin inversion gives
\begin{align*}
	f^{111}(xyk) =& \sum_{d=0}^\infty (2d+1) \Tr\Bigl(\WigDMat{d}(k) \frac{1}{(2\pi i)^2} \int_{\Re(\mu) = (-2,0,2)} p_{\rho+\mu}(y) \\
	& \qquad \int_{Y^+} \int_{U(\Z)\backslash U(\R)} \int_{K} f(x'y'k') \wbar{\trans{\WigDMat{d}(k')}} dk' \, dx' \, p_{\rho-\mu}(y') dy' \, d\mu \Bigr) \\
	=& \sum_{d=0}^\infty \frac{(2d+1)}{(2\pi i)^2} \int_{\Re(\mu) = (-2,0,2)} \Tr\Bigl(p_{\rho+\mu}(y) \WigDMat{d}(k) \int_{\Gamma\backslash G} f(g') \wbar{\trans{E^d(g', -\wbar{\mu})}} dg' \Bigr) \, d\mu.
\end{align*}

Now using the orthogonality of $f$ with the residues of $E^d$ to shift back to the lines \forcetextnewline $\Re(\mu)=0$, we have
\begin{align}
\label{eq:111constShifted}
	f^{111}(xyk) =& \frac{1}{\abs{V} \abs{W}} \sum_{d=0}^\infty \frac{(2d+1)}{(2\pi i)^2} \int_{\Re(\mu) = 0} \Tr\Bigl(\abs{V} \sum_{w\in W} p_{\rho+\mu^w}(y) \WigDMat{d}(k) \\
	& \qquad \int_{\Gamma\backslash G} f(g') \wbar{\trans{E^d(g', \mu^w)}} dg' \Bigr) \, d\mu. \nonumber
\end{align}
We apply the functional equations of $E^d$, again using the cycle-invariance of the trace,
\begin{align*}
	f^{111}(xyk) =& \frac{1}{\abs{V} \abs{W}} \sum_{d=0}^\infty \frac{(2d+1)}{(2\pi i)^2} \int_{\Re(\mu) = 0} \Tr\Bigl(\abs{V} \sum_{w\in W} p_{\rho+\mu^w}(y) \wbar{\trans{M^d(w^{-1},\mu^w)}} \WigDMat{d}(k) \\
	& \qquad \int_{\Gamma\backslash G} f(g') \wbar{\trans{E^d(g', \mu)}} dg' \Bigr) \, d\mu.
\end{align*}

The orthogonality and $\Sigmachi{d}{++}$-invariance of $M^d$ on $\Re(\mu)=0$ imply
\[ \wbar{\trans{M^d(w^{-1},\mu^w)}} = \wbar{\trans{M^d(w^{-1},\mu^w)}} \wbar{\trans{M^d(w,\mu)}} M^d(w,\mu) = M^d(w,\mu), \]
since $M^d(I,\mu)=\Sigmachi{d}{++}$.
So we may reconstruct the constant term of the Eisenstein series,
\begin{align*}
	f^{111}(xyk) =& \frac{1}{\abs{V} \abs{W}} \sum_{d=0}^\infty \frac{(2d+1)}{(2\pi i)^2} \int_{\Re(\mu) = 0} \Tr\Bigl(\abs{V} \sum_{w\in W} p_{\rho+\mu^w}(y) M^d(w,\mu) \WigDMat{d}(k) \\
	& \qquad \int_{\Gamma\backslash G} f(g') \wbar{\trans{E^d(g', \mu^w)}} dg' \Bigr) \, d\mu \\
	=& \frac{1}{\abs{V} \abs{W}} \sum_{d=0}^\infty \frac{(2d+1)}{(2\pi i)^2} \int_{\Re(\mu) = 0} \Tr\Bigl(E^{d,111}(xyk, \mu) \int_{\Gamma\backslash G} f(g') \wbar{\trans{E^d(g', \mu)}} dg' \Bigr) \, d\mu,
\end{align*}
that is, for $f_0$ as in Theorem \ref{thm:ContResSpectralExpand}, $f-f_0$ has zero minimal parabolic constant term.

It remains to show $f_0$ is square-integrable, but if we Fourier expand $E^d(g, \mu)$, then either the Whittaker function has rapid decay in $y_i$, or the integral over $\mu$ is has rapid decay in that $y_i$ by Mellin inversion.
(Using the fact that the $g'$ integral is essentially the Mellin transform of $f$ in the $y$ coordinates.)

\section{The maximal parabolic Eisenstein series}
\label{sect:MaxPara}
The maximal parabolic Eisenstein series are attached to Maass cusp forms in higher weight on $GL(2)$, so to define the Eisenstein series, we must first consider the higher-weight structure of $GL(2)$ cusp forms.
This is given in the paper of Duke, Friedlander and Iwaniec \cite{DFI01}.

\subsection{\texorpdfstring{Spectral basis of $L^2(SL(2,\Z)\backslash SL(2,\R))$}{Spectral basis of L2(SL(2,Z)\textbackslash SL(2,R))}}
\label{sect:GL2Basis}
We start with the Iwasawa decomposition of $g \in SL(2,\R)$,
\[ g = r \Matrix{y&x\\&1} \Matrix{\cos\theta&-\sin\theta\\\sin\theta&\cos\theta}, \qquad r,y\in\R^+, x\in\R, 0\le\theta<2\pi. \]
Now a Maass form for $SL(2,\Z)$ of weight $k\in \Z$ is a function of the form $\phi(x+iy) e^{-ik\theta}$ (using the isomorphism between $SL(2,\R)/SO(2,\R)$ and the complex upper half-plane) which is an eigenfunction of the Laplacian and left-invariant by $\gamma \in SL(2,\Z)$ in the sense that if
\[ \gamma \Matrix{y&x\\&1} = r \Matrix{y^*&x^*\\&1} \Matrix{\cos\theta^*&-\sin\theta^*\\\sin\theta^*&\cos\theta^*}, \]
then
\[ \phi(x^*+iy^*) e^{-ik(\theta^*+\theta)} = \phi(x+iy) e^{-ik\theta}. \]
In an abuse of terminology, we tend to refer to $\phi$ as the Maass form (this matches the terminology of \cite{DFI01}, but conflicts somewhat with our notation on $GL(3)$).
Note that $k$ here is necessarily even, since $-I \in SL(2,\Z)$.
If $\phi$ additionally satisfies $\int_0^1 \phi(x+iy) dx = 0$, we call it a Maass cusp form.
Such a function $\phi$ has a $GL(2)$ Fourier-Whittaker expansion
\begin{align}
\label{eq:SL2Fourier}
	\phi(x+iy) = \sum_\pm \sum_{n\ge 1} \rho_\phi(\pm n) \e{\pm nx} W_{\pm\frac{k}{2},\mu_\phi}(4\pi n y).
\end{align}

An orthonormal basis of such forms can be constructed from the spherical Maass cusp forms and holomorphic modular cusp forms by applying the weight raising and lowering operators as in \cite{DFI01}.
Each spherical Maass or holomorphic modular cusp form of minimal weight $\kappa \ge 0$ has a unique image in the basis of weight $k$ Maass forms for each $\abs{k} \ge \kappa$, and we would like to construct this image by giving its Fourier-Whittaker expansion using the Hecke eigenvalues of the form.
This amounts to specifying how the Fourier-Whittaker coefficients $\rho_\phi(\pm n)$ vary with the weight $k$.
We assume that all forms are Hecke eigenfunctions and that the spherical Maass forms are eigenfunctions of the parity/conjugation operator
\[ \phi(-x+iy) = \varepsilon \, \phi(x+iy), \qquad \varepsilon=\pm 1. \]
Thus we need only specify the values of $\rho_\phi(\pm 1)$, since the others may be obtained from
\[ \rho_\phi(\pm n) = \rho_\phi(\pm 1) \lambda_\Phi(n) n^{-\frac{1}{2}}, \qquad n \ge 1\]
where $\lambda_\Phi(n)$ are the Hecke eigenvalues of the minimal-weight ancestor of $\phi$.
(The need for the factor $n^{-\frac{1}{2}}$ can be seen in \cite[(4.82)]{DFI01} for holomorphic forms and \cite[9.235.2]{GradRyzh} for spherical Maass forms.)

One might worry that our primary reference \cite{DFI01} does not literally apply, as they assume a primitive character on a congruence subgroup, but it is simple to verify that the referenced results, especially their section 4, still hold.
The simplest explanation here is that information on the Whittaker functions and raising and lowering operators (i.e. the Archimedean place) is independent of the discrete subgroup and associated character (i.e. the non-Archimedean places).

If $\phi$ is the image in weight $k$ of a spherical Maass form $\Phi$, then from \cite[(4.77),(4.70)]{DFI01} (using $s=\frac{1}{2}+\mu_\Phi$ and ignoring the sign of $\alpha(s;k)$), \cite[9.235.2]{GradRyzh} and \cite[(4.4)]{Val01}, we know
\[ \rho_\phi(1)^{-2} = \frac{2 \, L(1, \AdSq \Phi)}{\cos (\pi \mu_\Phi)} \abs{\frac{\Gamma\paren{\frac{1}{2}+\mu_\Phi+\frac{\abs{k}}{2}}}{\Gamma\paren{\frac{1}{2}+\mu_\Phi-\frac{\abs{k}}{2}}}}, \qquad \rho_\phi(-1) = \varepsilon_\Phi \frac{\Gamma\paren{\frac{1}{2}+\mu_\Phi+\frac{k}{2}}}{\Gamma\paren{\frac{1}{2}+\mu_\Phi-\frac{k}{2}}} \rho_\phi(1), \]
where $\varepsilon_\Phi=\pm 1$ is the parity of the spherical form and $\mu_\phi=\mu_\Phi\in i\R$ is the spectral parameter.
(Note that $\cos (\pi \mu_\Phi) \ge 1$.)
It will be necessary to maintain a consistent choice for $\sgn (\Im (\mu_\Phi))$.

Similarly, if $\phi$ is the image of the holomorphic modular form $\Phi$ of (even) weight $\kappa > 0$, then we have $\mu_\phi=\mu_\Phi = \frac{\kappa-1}{2}$ and
\[ \rho_\phi(\sgn(k))^{-2} = \frac{\frac{\pi}{3} L(1,\AdSq \phi)}{(4\pi)^\kappa} \Gamma\paren{\frac{\abs{k}+\kappa}{2}} \Gamma\paren{\frac{2+\abs{k}-\kappa}{2}}, \qquad \rho_\phi(-\sgn(k)) =0, \]
for $\abs{k} \ge \kappa$, using \cite[(5.101)]{IK}, \cite[(4.83)]{DFI01}.
For later computations, we define $\varepsilon_\Phi=1$ for these forms.
A note on terminology:  We will frequently use the term ``holomorphic modular form'', but to be precise, we are referring to the image $y^{\kappa/2} \Phi(x+iy) e^{-i\kappa\theta}$ in the Maass forms of weight $\kappa > 0$.

We are allowed to multiply the form by a complex number of unit modulus, and so we take $\sgn(\rho_\phi(1))=(-1)^{k/2}$ for spherical Maass forms, and $\sgn(\rho_\phi(\sgn(k)))=(-1)^{k/2}$ for holomorphic modular forms.
The forms of negative weight are constructed directly from the positive weight forms by applying a conjugation operator, which we will discuss very precisely in the next section.

\subsection{Construction of the Eisenstein series}
\label{sect:MaxParaConstruct}
Set $P_{21} = \set{\SmallMatrix{*&*&*\\ *&*&*\\ 0&0&1}}$, $P_{12} = \set{\SmallMatrix{1&*&*\\ 0&*&*\\ 0&*&*}}$.
The vector-valued maximal parabolic Eisenstein series attached to an $SL(2,\Z)$ Hecke-Maass form $\phi$ of weight $m'$ are given by
\begin{align}
\label{eq:MaxParaDef}
	E^d_{m'}(g, \phi, \mu_1) =& \sum_{\gamma \in P_{21}(\Z) \backslash \Gamma} I^d_{m'}(\gamma g, \phi,\mu_1), \\
	I^d_{m'}(xyk, \phi, \mu_1) =& \phi(x_2+iy_2) (y_1^2 y_2)^{\frac{1}{2}+\mu_1} \WigDRow{d}{m'}(k), \nonumber
\end{align}
and
\begin{align*}
	\wtilde{E}^d_{m'}(g, \phi, \mu_1) =& \sum_{\gamma \in P_{12}(\Z) \backslash \Gamma} \wtilde{I}^d_{m'}(\gamma g, \phi,\mu_1), \\
	\wtilde{I}^d_{m'}(xyk, \phi,\mu_1) =& \phi(-x_1+iy_1) (y_1 y_2^2)^{\frac{1}{2}+\mu_1} \dualWigDRow{d}{m'}(k).
\end{align*}

The function $I^d_{m'}(xyk, \phi,\mu_1)$ is left-invariant under $\gamma \in SL(2,\Z)$ (embedded in the upper left coordinates in $SL(3,\Z)$), because if
\[ \gamma \Matrix{y_2&x_2\\&1} = r \Matrix{y_2^*&x_2^*\\&1} \Matrix{\cos\theta^*&-\sin\theta^*\\\sin\theta^*&\cos\theta^*}, \]
then
\begin{align*}
	I^d_{m'}(\gamma xyk, \phi,\mu_1) =& \phi(x_2^*+iy_2^*) (r^2 y_1^2 y_2^*)^{\frac{1}{2}+\mu_1} e^{-i m' \theta^*} \WigDRow{d}{m'}(k) \\
	=& \phi(x_2+iy_2) (y_1^2 y_2)^{\frac{1}{2}+\mu_1} \WigDRow{d}{m'}(k) \\
	=& I^d_{m'}(xyk,\phi,\mu_1),
\end{align*}
by the left-translation property of the weight $m'$ Maass form $\phi$, and
\[ r^2 y_1^2 y_2^* = \det y_1\gamma\Matrix{y_2&x_2\\&1} = y_1^2 y_2. \]

For a spherical Maass or holomorphic modular form $\Phi$, we define a diagonal matrix $\Phi^d$ whose entries $\Phi_{m',m'}^d$, $\abs{m'} \le d$ are the images of $\Phi$ in the orthonormal basis of weight $m'$ (hence the entries for odd $m'$ are necessarily zero).
We specify the relation between positive and negative weights with some care:
For the image in negative weights of even spherical Maass forms, use the conjugation operator $\phi(x+iy) \mapsto \phi(-x+iy)$, but for the odd spherical Maass forms, we use $\phi(x+iy) \mapsto -\phi(-x+iy)$.
Collectively, we say the forms have sign $\varepsilon_\Phi=\pm 1$ and obey the conjugation rule $\phi(x+iy) \mapsto \varepsilon_\Phi \phi(-x+iy)$.
For holomorphic modular forms, we have arbitrarily chosen $\varepsilon_\Phi = 1$, which is allowable since the image of a holomorphic modular form in the basis of weight zero forms is zero.
Thus in each case $\Phi^d_{-m',-m'}(-x+iy)=\varepsilon_\Phi \Phi^d_{m',m'}(x+iy)$, so the matrix satisfies
\begin{align}
\label{eq:PhidFE}
	\Phi^d(-x+iy) = \varepsilon_\Phi \WigDMat{d}(\vpmpm{+-}) \Phi^d(x+iy) \WigDMat{d}(\vpmpm{+-}),
\end{align}
by the explicit form of $\WigDMat{d}(\vpmpm{+-})$.

Then we may similarly collect the row vectors $E^d_{m'}(g, \Phi_{m',m'}^d, \mu_1)$ into a matrix, which we denote $E^d(g, \Phi, \mu_1)$.
Alternately, one may construct the matrix-valued Eisenstein series from the matrix-valued function $I^d(g, \Phi, \mu_1)$, defined in the appropriate manner.
Note that most of the entries in this matrix-valued Eisenstein series will be zero as only the even weights survive and rows of weight smaller in absolute value than the minimal weight for $\Phi$ will also be zero.

Define the involution $\iota:G\to G$ by
\begin{align}
\label{eq:iotaDef}
	g^\iota =& w_l \trans{(g^{-1})} w_l.
\end{align}
The functions $I^d$ and $\wtilde{I}^d$ are explicitly related by
\[ I^d(g, \Phi, \mu_1) = \wtilde{I}^d(g^\iota, \Phi, \mu_1), \]
and $(P_{12})^\iota=P_{21}$.
Thus the relationship between $E^d(g, \Phi, \mu_1)$ and the equivalent $\wtilde{E}^d(g, \Phi, \mu_1)$ is also given by the involution
\begin{align}
\label{eq:MaxEDualFE}
	E^d(g, \Phi, \mu_1) = \wtilde{E}^d(g^\iota, \Phi, \mu_1).
\end{align}

\subsection{Normalizations and the Fourier expansion of $I^d$}
\label{sect:MaxParaNorms}
It will be easiest to deal with the Hecke-normalized form, so for $\Phi$ a spherical Maass cusp form, we set
\begin{align*}
	\Gamma^d_{\Phi,m',m'} =& \pi^{-\frac{1}{2}+\mu_\Phi} i^{-m'} \abs{\frac{\cos(\pi \mu_\Phi) \Gamma\paren{\frac{1}{2}+\mu_\Phi-\frac{\abs{m'}}{2}}}{2 L(1,\AdSq \Phi) \Gamma\paren{\frac{1}{2}+\mu_\Phi+\frac{\abs{m'}}{2}}}}^{1/2} \Gamma\paren{\frac{1}{2}-\mu_\Phi+\frac{m'}{2}},
\end{align*}
and for $\Phi$ a holomorphic modular form of weight $\kappa > 0$ and sign $\varepsilon_\Phi = (-1)^{\alpha_\Phi}$,
\begin{align*}
	\Gamma^d_{\Phi,m',m'} =& \frac{(2\pi)^{\kappa} i^{-m'}}{\pi \sqrt{\frac{\pi}{3} L(1,\AdSq \Phi)}} \times \piecewise{\sqrt{\left. \Gamma\paren{\frac{2+\abs{m'}-\kappa}{2}} \middle / \Gamma\paren{\frac{\abs{m'}+\kappa}{2}} \right.} & \If \abs{m'}\ge\kappa, \\ 1 & \Otherwise.}
\end{align*}
As usual $\Gamma^d_\Phi$ is the diagonal matrix with the given entries.
Then the matrices for the $L^2$-normalized form $\Phi$ and what we term the ``Hecke-normalized'' form $\Phi_H$ are related by
\[ \Gamma^d_\Phi \Phi_H^d = \Phi^d. \]
We also define $\what{\Gamma}^d_\Phi$ by $\what{\Gamma}^d_\Phi = \Gamma^d_\Phi$ for spherical Maass forms $\Phi$, and 
\begin{align*}
	\what{\Gamma}^d_{\Phi,m',m'} =& \frac{(2\pi)^{\kappa} i^{-m'}}{\pi \sqrt{\frac{\pi}{3} L(1,\AdSq \Phi)}} \times \piecewise{\sqrt{\left. \Gamma\paren{\frac{\abs{m'}+\kappa}{2}} \middle / \Gamma\paren{\frac{2+\abs{m'}-\kappa}{2}} \right.} & \If \abs{m'}\ge\kappa, \\ 1 & \Otherwise,}
\end{align*}
for holomorphic modular forms.
By the functional equation of the classical Whittaker function, we could equally well have defined the Hecke normalization by
\[ \what{\Gamma}^d_\Phi \Phi_{\what{H}}^d = \Phi^d, \]
with the understanding $\mu_\Phi = \frac{1-\kappa}{2}$ for holomorphic modular forms (in place of $\mu_\Phi = \frac{\kappa-1}{2}$).
It will become important that $\wbar{\Gamma^d_\Phi} \what{\Gamma}^d_\Phi$ is a scalar matrix.

Set
\begin{align}
\label{eq:MaxEisenMu}
	\mu'=\mu'(\mu_\Phi,\mu_1)=(\mu_1-\mu_\Phi,\mu_1+\mu_\Phi,-2\mu_1),
\end{align}
then the Fourier expansion of $I^d(xy, \Phi_H, \mu_1)$ may be computed from \eqref{eq:SL2Fourier}
\begin{align}
\label{eq:IdFour}
	I^d(xy, \Phi_H, \mu_1) =& \sum_\pm \sum_{n\in\N} (\pm 1)^{\alpha_\Phi} \lambda_\Phi(n) n^{\mu_\Phi} \Xi^d \WigDMat{d}(\vpmpm{+-}) W^d(xy,w_2,\mu', \psi_{(0,\pm n)}),
\end{align}
where the sign of $\Phi$ is $\varepsilon_\Phi = (-1)^{\alpha_\Phi}$, and
\begin{align}
\label{eq:XiDef}
	\Xi^d =& \tfrac{1}{2}\paren{\WigDMat{d}(\vpmpm{++})+\WigDMat{d}(\vpmpm{-+})}
\end{align}
is the diagonal matrix with entries $\Xi^d_{m',m'} = \delta_{2|m'}$.
The choice of signs of $\rho_\phi(\pm1)$ as described at the end of section \ref{sect:GL2Basis} (which appears in the definitions of $\Gamma^d_\Phi$, and $\what{\Gamma}^d_\Phi$) compensates for a factor $\Dtildek{d}{i}$ which would otherwise appear in \eqref{eq:IdFour}.
This simplifies the computations of the Fourier-Whittaker coefficients slightly.

For $\Phi$ a holomorphic modular form of weight $\kappa > 0$, it is highly important to note that the $m'$-th row ($m'$ even) of
\[ \WigDMat{d}(\vpmpm{+-}) W^d(I,w_2,\mu', \psi_{(0,n_2 y_2)}) = \Dtildek{d}{i}\mathcal{W}^d(-n_2 y_2, 1-\kappa)  \]
is zero unless $\sgn(n_2) m' \ge \kappa$.
This is due to the pole of the gamma function in the denominator of \eqref{eq:Weval}, and it aligns perfectly with the fact that these rows of (the Fourier-Whittaker coefficients of) $\Phi^d$ are zero.
Thus there is no need to include a matrix in \eqref{eq:IdFour} to enforce this.
On the other hand, the normalization $\Phi_{\what{H}}^d$ does not have this property, but this will be handled trivially below.

\subsection{\texorpdfstring{A Bruhat-type decomposition for $P_{21}(\Z)\backslash \Gamma$}{A Bruhat-type decomposition for P21(Z)\textbackslash{Gamma}}}
\label{sect:MaxParaBruhat}
In analogy with the Bruhat decomposition, we may write an element of $\Gamma$ as one of
\[ \Matrix{a_1&b_1&b_2\\a_2&b_3&b_4\\c&d_1&d_2} = \Matrix{1&0&\frac{a_1}{c}\\&1&\frac{a_2}{c}\\&&1}\Matrix{\frac{1}{c} u\\&c} w_4 \Matrix{1&\frac{d_1}{c}&\frac{d_2}{c}\\&1&0\\&&1}, \quad u=\Matrix{c b_1-d_1 a_1&c b_2-d_2 a_1\\c b_3-d_1 a_2&c b_4-d_2 a_2}, \]
\[ \Matrix{a_1&b_1&b_2\\a_2&b_3&b_4\\0&c&d_1} = \Matrix{1&0&\frac{b_1}{c}\\&1&\frac{b_3}{c}\\&&1}\Matrix{\frac{1}{c} u\\&-c} w_3 \Matrix{1&0&0\\&1&\frac{d_1}{c}\\&&1}, \quad u=\Matrix{-c a_1&d_1 b_1-c b_2\\-c a_2&d_1 b_3-c b_4}, \]
or
\[ \Matrix{a_1&b_1&b_2\\a_2&b_3&b_4\\0&0&c} = \Matrix{1&0&\frac{b_2}{c}\\&1&\frac{b_4}{c}\\&&1} \Matrix{\frac{1}{c} u\\&c} I I, \quad u=c \Matrix{a_1&b_1\\a_2&b_3}, \]
where $c\ne 0$.
Note that, in each case, $\det u=c$.
If we translate $u$ on the left by an element of $SL(2,\Z)$, we may assume $u$ is upper triangular; in particular, the set of $P_{21}(\Z) \gamma$ with $\gamma$ running through the $I$, $w_3$, and $w_4$ Weyl cells of $\Gamma$ covers the space $P_{21}(\Z)\backslash \Gamma$.

Now suppose that $P_{21}(\Z) \gamma_1 = P_{21}(\Z) \gamma_2$ with each $\gamma_i$ in one of the $I$, $w_3$, and $w_4$ Weyl cells of $\Gamma$.
Immediately we see that $\gamma_1$ and $\gamma_2$ have the same bottom row, and hence also belong to the same Weyl cell.
The Bruhat decomposition, say $\gamma_1=b_1cvw b_2$ is unique if we require $b_2\in \wbar{U}_w(\Q)$, and from the above, we can see $b_2$ depends only on the bottom row.
Thus, if also $\gamma_2 = b_1'c'v'w' b_2'$, then $P_{21}(\Z) \gamma_1 = P_{21}(\Z) \gamma_2$ is equivalent to $w=w'$, $b_2=b_2'$ and $(b_1cv) (b_1'c'v')^{-1} \in P_{21}(\Z)$, but the set of upper triangular matrices in $P_{21}(\Z)$ is exactly $\wbar{V}_2 \, U(\Z)$, with
\[ \wbar{V}_2 = \set{I,\vpmpm{-+}}. \]
In summary, a complete set of representatives for $P_{21}(\Z)\backslash \Gamma$ is given by the representatives of $\wbar{V}_2 \, U(\Z)\backslash \Gamma_w$ as $w$ runs through $I$, $w_3$, and $w_4$ where $\Gamma_w \subset \Gamma$ is the Weyl cell for $w$.

We wish to be more explicit:
For the Weyl cell of the identity, a set of representatives is given by
\[ V_2 = \set{I, \vpmpm{+-}}. \]
For the $w_3$ Weyl cell, a set of representatives is given by
\[ \Matrix{1&0&0\\&1&-\frac{\wbar{d_1}}{c_1}\\&&1}\Matrix{1\\&\frac{1}{c_1}\\&&c_1}v\,w_3\Matrix{1&0&0\\&1&\frac{d_1}{c_1}\\&&1}, \]
with $v\in V_2$, $c_1 \in \N$, $d_1\in\Z$, $(d_1,c_1)=1$, $\wbar{d_1} d_1 \equiv 1 \pmod{c_1}$.
For the $w_4$ Weyl cell, we may express the Bruhat decomposition via the Pl\"ucker coordinates by
\[ \Matrix{1&\frac{-\wbar{C_2} C_1}{B_2}&\frac{\wbar{C_2}}{A_1}\\&1&-\frac{\wbar{C_1}}{A_1/B_2}\\&&1}\Matrix{\frac{1}{B_2}\\&\frac{B_2}{A_1}\\&&A_1}v\,w_4\Matrix{1&-\frac{C_2}{B_2}&\frac{C_1}{A_1}\\&1&0\\&&1}, \]
where $v\in V_2$, $C_1,C_2 \in \Z$, and $A_1 \in \N$, $0 < B_2 | A_1$ with $(C_1,A_1/B_2)=(C_2,B_2)=1$.
$\wbar{C_2}$ and $\wbar{C_1}$ are integers which are the multiplicative inverses of $C_2$ and $C_1$ modulo $B_2$ and $A_1/B_2$, respectively.
Note the minor notational discrepancy in the $x_3$ coordinate of $b_1$ here has to do with the multiplication in $U(\Z)$, but it is well-defined.

This analysis is equivalent to Miyazaki's Lemma 5.1 \cite{Miya01}, but we prefer the above formulation.

\subsection{Fourier coefficients}
\label{sect:MaxParaFourier}
The sum in the Eisenstein series can be viewed as the finite part of an intertwining operator, so our computations here specialize and make explicit those of Shahidi \cite{Shah01} at the unramified places.
The Fourier integral can also be viewed as the place at infinity of an intertwining operator, but the computations here are not to be found in Shahidi's book \cite{Shah01}, as our Eisenstein series are ramified at infinity.
The paper \cite{Miya01} gives an essentially identical computation, in a radically different notation.
We will compute all of the Fourier coefficients, and use this computation to determine the constant terms in the next section.

As before, it is sufficient to consider the Fourier coefficients at $g=y\in Y^+$, so we compute
\begin{align*}
	\rho_{E^d}(n,y,\Phi_H,\mu_1) :=& \int_{U(\Z)\backslash U(\R)} E^d(uy, \Phi_H, \mu_1) \wbar{\psi_n(u)} du.
\end{align*}
Splitting the sum over $P_{21}\backslash\Gamma$ in the definition \eqref{eq:MaxParaDef} over the Weyl cells of the previous section, we compute the Fourier coefficients of each cell separately, say
\begin{align*}
	\rho_{E^d,w}(n,y,\Phi_H,\mu_1) :=& \int_{U(\Z)\backslash U(\R)} \sum_{\gamma\in\wbar{V}_2U(\Z)\backslash\Gamma_w} I^d(\gamma uy, \Phi_H, \mu_1) \wbar{\psi_n(u)} du,
\end{align*}
for $w=I,w_3,w_4$.

\subsubsection{The identity Weyl cell}
From \eqref{eq:PhidFE}, we have that
\begin{align*}
	I^d(\vpmpm{+-} xyk, \Phi_H, \mu_1) = I^d((\vpmpm{+-} x \vpmpm{+-}) y, \Phi_H, \mu_1) \WigDMat{d}(\vpmpm{+-}k)= \varepsilon_\Phi \WigDMat{d}(\vpmpm{+-}) I^d(xyk, \Phi_H, \mu_1).
\end{align*}
By the $SL(2,\Z)$ invariance (recall $\Phi^d_{m',m'}=0$ for odd $m'$), we have
\begin{align*}
	I^d(\vpmpm{+-} xyk, \Phi_H, \mu_1) = I^d(xyk, \Phi_H, \mu_1) = \WigDMat{d}(\vpmpm{-+}) I^d(xyk, \Phi_H, \mu_1).
\end{align*}
Thus we may convert the sum on $V_2$ to the projection operator on $V$,
\begin{align}
\label{eq:MaxEisenVSum}
	\sum_{v\in V_2} I^d(v g, \Phi_H, \mu_1) = 2\Sigma^d_{+\varepsilon_\Phi} I^d(g, \Phi_H, \mu_1).
\end{align}

Applying \eqref{eq:IdFour}, we immediately have the Fourier coefficients of the Weyl cell of the identity,
\begin{align}
\label{eq:MaxEisenIdFour}
	\rho_{E^d,I}(n,y,\Phi_H,\mu_1) = 2 \varepsilon_\Phi \sgn(n_2)^{\alpha_\Phi} \lambda_\Phi(\abs{n_2}) \abs{n_2}^{\mu_\Phi} \Sigma^d_{+\varepsilon_\Phi} W^d(y,w_2,\mu', \psi_n)
\end{align}
if $n_2 \ne 0 = n_1$, and 0 otherwise.

\subsubsection{The $w_3$ Weyl cell}
From the explicit Bruhat-type decomposition, we may write these terms as
\begin{align*}
	\rho_{E^d, w_3}(n,y,\Phi_H,\mu_1) =& 2\Sigma^d_{+\varepsilon_\Phi} \int_{U(\Z)\backslash U(\R)} \sum_{c_1\in\N} \sum_{\substack{d_1\in\Z\\(d_1,c_1)=1}} I^d\paren{x^* y^* k^*, \Phi_H, \mu_1} \wbar{\psi_n(u)} du,
\end{align*}
where
\[ x^* y^* k^* \equiv \Matrix{1\\&\frac{1}{c_1}\\&&c_1}\,w_3\Matrix{1&0&0\\&1&\frac{d_1}{c_1}\\&&1} uy \pmod{\R^+}. \]
From the Iwasawa decomposition, we see that the only occurance of $u_3$ is in $x_2^*$, so the $u_3$ integral is
\[ \int_0^1 \Phi_H^d\paren{c_1 u_3-c_1 \paren{c_1 u_1+d_1} u_2+iy_2^*} du_3 = 0. \]
(This is not to say these terms sum to the zero function, but rather their contribution to the Fourier expansion is picked up by the sum over $U(\Z)\backslash SL(2,\Z)$.)

\subsubsection{The $w_4$ Weyl cell}
From the explicit Bruhat-type decomposition, we may write these terms as
\begin{align*}
	\rho_{E^d, w_4}(n,y,\Phi_H,\mu_1) =& \sum_{\substack{A_1 \in \N\\0 < B_2 | A_1}} \sum_{\substack{C_1,C_2 \in \Z\\(C_1,A_1/B_2)=(C_2,B_2)=1}} \sum_{v\in V_2} \int_{U(\Z)\backslash U(\R)} I^d(x^* y^* k^*, \Phi_H, \mu_1) \wbar{\psi_n(u)} du,
\end{align*}
where
\[ x^* y^* k^* \equiv \Matrix{1&\frac{-\wbar{C_2} C_1}{B_2}&0\\&1&0\\&&1}\Matrix{\frac{1}{B_2}\\&\frac{B_2}{A_1}\\&&A_1}v\,w_4\Matrix{1&-\frac{C_2}{B_2}&\frac{C_1}{A_1}\\&1&0\\&&1} uy \pmod{\R^+}, \]
with $\wbar{C_2}C_2 \equiv 1 \pmod{B_2}$.

Sending
\[ \Matrix{1&-\frac{C_2}{B_2}&\frac{C_1}{A_1}\\&1&0\\&&1} u \mapsto u, \]
and similarly conjugating $\SmallMatrix{1&\frac{-\wbar{C_2} C_1}{B_2}&0\\&1&0\\&&1}$ across, changing the sign of $C_1$ as necessary,
gives
\begin{align}
\label{eq:Beforen2zero}
	\rho_{E^d, w_4}(n,y,\Phi_H,\mu_1) =& 2 \Sigma^d_{+\varepsilon_\Phi} \sum_{a_1,a_2 \in \N} \sum_{\substack{C_1\summod{a_1}\\C_2\summod{a_2}\\(C_1,a_1)=(C_2,a_2)=1}} \e{-n_2 \frac{C_2}{a_2}-n_1\frac{\wbar{C_2} C_1}{a_1}} \\
& \qquad \int_{[0,a_2]\times\R^2} I^d\paren{\Matrix{\frac{1}{a_2}\\[2pt]&\frac{1}{a_1}\\&&a_1 a_2}w_4 uy, \Phi_H, \mu_1} \wbar{\psi_n(u)} du, \nonumber
\end{align}
after setting $a_1 = \frac{A_1}{B_2}$, $a_2=B_2$.

If $n_1=0$, then the $x_2$ coordinate of the argument of $I^d$ is
\[ \frac{a_1}{a_2}\paren{u_1-\frac{u_2 u_3}{y_2^2+u_2^2}}, \]
hence the integral is zero by cuspidality of $I^d$, and we now assume $n_1\ne 0$.
Similarly, by sending $u_1 \mapsto a_2 u_1$, the integral is zero unless $a_1|(a_2 n_1)$, so the modular sums are well-defined, and we may remove the $\wbar{C_2}$ using $(C_2, a_1/(a_1,n_1))=1$.

Inserting the Fourier expansion of $I^d$ from \eqref{eq:IdFour} and evaluating the $u_1$ integral, the finite and infinite places separate,
\[ \rho_{E^d, w_4}(n,y,\Phi_H,\mu_1) = 2 \varepsilon_\Phi \sgn(n_1)^{\alpha_\Phi} \rho_\text{fin} \Sigma^d_{+\varepsilon_\Phi} \rho_\text{inf}. \]
Writing out the infinite place, i.e. the $u_2$ and $u_3$ integrals, gives an integral of the form \eqref{eq:WhittPartialEval}, so this becomes
\begin{align}
\label{eq:MaxEisenFourInfPart}
	\rho_\text{inf} =& y_1^{1-\mu_1+\mu_\Phi} y_2^{1-2\mu_1} \WigDMat{d}(\vpmpm{-+}) W^d(I,\mu',\psi_{yn}) = \WigDMat{d}(\vpmpm{-+}) W^d(y,\mu',\psi_n).
\end{align}

The finite part is the sum
\begin{align*}
	\rho_\text{fin} =& \abs{n_1}^{\mu_\Phi} \sum_{\substack{a_1,a_2,a_3 \in \N\\a_1 a_3=a_2 \abs{n_1}}} a_1^{-1-3\mu_1} a_2^{-1-3\mu_1} \sum_{\substack{C_1\summod{a_1}\\C_2\summod{a_2}\\(C_1,a_1)=(C_2,a_2)=1}} \e{n_2 \frac{C_2}{a_2}+n_1\frac{C_1}{a_1}} \lambda_\Phi(a_3).
\end{align*}
where $\sgn(n_1) a_3$ was the value of the index $\pm n$ occuring in \eqref{eq:IdFour}.
These sums are evaluated in \cite[Lemma 5.5]{Miya01}:
For $n_1 \ne 0=n_2$, we have
\begin{align*}
	\rho_\text{fin} =& \abs{n_1}^{\mu_\Phi} \lambda_\Phi(\abs{n_1}) \frac{L(\Phi,3\mu_1)}{L(\Phi,1+3\mu_1)},
\end{align*}
where
\[ L(\Phi,s) = \sum_{n=1}^\infty \frac{\lambda_\Phi(n)}{n^s} \]
is the usual Hecke $L$-function attached to $\Phi$.

If we take the Satake parameters $\lambda_\Phi(p)=a_\Phi(p)+b_\Phi(p)$ with $a_\Phi(p) b_\Phi(p) = 1$, and define by multiplicativity
\begin{align}
\label{eq:MaxEisenHecke}
	& \lambda_E((p^\alpha, p^\beta), \Phi, \mu_1) = p^{3\mu_1(\alpha-\beta)} S_{\alpha,\beta}\paren{a_\Phi(p) p^{-3\mu_1}, b_\Phi(p) p^{-3\mu_1}},
\end{align}
and $\lambda_E(n, \Phi, \mu_1) = \prod_p \lambda_E((p^{v_p(n_1)}, p^{v_p(n_2)}), \Phi, \mu_1)$ (note $\lambda_E((1, 1), \Phi, \mu_1)=1$), we have
\begin{align*}
	\rho_\text{fin} =& \abs{n_1}^{\mu_\Phi}\abs{n_2}^{-3\mu_1} \frac{\lambda_E(n, \Phi, \mu_1)}{L(\Phi,1+3\mu_1)},
\end{align*}
when $n_1 n_2 \ne 0$.

It is possible to show the more explicit form
\begin{align}
\label{eq:MaxEisenHecke2}
	& \lambda_E((p^\alpha, p^\beta), \Phi, \mu_1) =\\
	& \frac{p^{-3\mu_1(\alpha+1)}\lambda_\Phi(p^{\beta})+p^{3\mu_1(\beta+1)}\lambda_\Phi(p^{\alpha}) -\lambda_\Phi(p^{\alpha+1})\lambda_\Phi(p^{\beta})-\lambda_\Phi(p^{\alpha})\lambda_\Phi(p^{\beta+1})+\lambda_\Phi(p)\lambda_\Phi(p^{\alpha})\lambda_\Phi(p^{\beta})}{p^{3\mu_1}+p^{-3\mu_1}-\lambda_\Phi(p)} \nonumber
\end{align}
Notice the coefficients $\lambda_E(n, \Phi, \mu_1)$ are actually the $GL(3)$ Hecke eigenvalues, c.f. \cite[10.9.3]{Gold01}:
\begin{align}
\label{eq:MaxEisenHeckeRels1}
	\lambda_E((p^{\alpha}, 1), \Phi, \mu_1) =& \sum_{a_1=0}^{\alpha} \lambda_\Phi(p^{a_1}) p^{3\mu_1(\alpha-a_1)}, \\
\label{eq:MaxEisenHeckeRels2}
	\lambda_E((1,p^{\beta}), \Phi, \mu_1) =& \sum_{a_1=0}^{\alpha} \lambda_\Phi(p^{a_1}) p^{-3\mu_1(\alpha-a_1)}, \\
\label{eq:MaxEisenHeckeRels3}
	\lambda_E((p^{\alpha}, p^{\beta}), \Phi, \mu_1) =& \lambda_E((p^{\alpha}, 1), \Phi, \mu_1)\lambda_E((1,p^{\beta}), \Phi, \mu_1) \\
	& \qquad-\lambda_E((p^{\alpha-1}, 1), \Phi, \mu_1)\lambda_E((1,p^{\beta-1}), \Phi, \mu_1), \nonumber
\end{align}
again defining $\lambda_E((p^{-1}, 1), \Phi, \mu_1)=\lambda_E((1,p^{-1}), \Phi, \mu_1)=0$.

The Fourier-Whittaker coefficients in the normalization of \eqref{eq:FWCoefsDef} are then
\begin{align}
\label{eq:MaxEisenFour1}
	\rho_{E^d}(w_2, n,\Phi_H,\mu_1) =& 2 \delta_{n_2\ne 0} \varepsilon_\Phi \sgn(n_2)^{\alpha_\Phi} \lambda_\Phi(\abs{n_2}) \abs{n_2}^{-\mu_1} \Sigma^d_{+\varepsilon_\Phi}, \\
\label{eq:MaxEisenFour2}
	\rho_{E^d}(w_l, n,\Phi_H,\mu_1) =& 2 \delta_{n_1\ne 0} \sgn(n_1)^{\alpha_\Phi} \frac{\wtilde{n_1}^{\mu_1}\wtilde{n_2}^{-\mu_1}}{L(\Phi,1+3\mu_1)} \Sigma^d_{+\varepsilon_\Phi} \\
	& \qquad \times \piecewise{\lambda_\Phi(\abs{n_1}) L(\Phi,3\mu_1) & \If n_1 \ne 0 = n_2,\\ \lambda_E(n, \Phi, \mu_1) & \If n_1 n_2 \ne 0,} \nonumber
\end{align}
and all other cases are zero.
Recall $\delta_{w_2,n}=0$ unless $n_1=0$.

\subsection{Constant terms}
\label{sect:MaxParaConstTerms}
It will be useful for later sections to reconstitute the constant terms from the Fourier expansion.
The $1,1,1$ constant term is zero.
The $2,1$ constant term is
\begin{align}
	E^{d,21}(xy, \Phi_H, \mu_1) := \int_{\R^2} E^d\paren{\SmallMatrix{1&x_2&u_3\\&1&u_1\\&&1} y, \Phi_H, \mu_1} du_1\, du_3 = 2 \Sigma^d_{+\varepsilon_\Phi} I^d\paren{\SmallMatrix{1&x_2\\&1\\&&1} y, \Phi_H, \mu_1},
\end{align}
from \eqref{eq:MaxEisenFour1} and \eqref{eq:IdFour} (keeping in mind the slight difference in notation of \eqref{eq:FWCoefsDef}).

For the $1,2$ constant term, we point out that the degenerate character Whittaker function may be written as
\begin{align*}
	& W^d(xy, w_l, \mu, \psi_{(n_1,0)}) = (y_1 y_2^2)^{\mu_3} \WigDMat{d}(\vpmpm{+-} w_l) \mathcal{W}^d(0,\mu_2-\mu_3) \\
	& \qquad \times \WigDMat{d}(w_3) \mathcal{W}^d(0,\mu_1-\mu_3) \WigDMat{d}(w_l) W^d(\vpmpm{+-}x^\iota y^\iota \vpmpm{+-}, w_2, \mu, \psi_{(0,n_1)}) \WigDMat{d}(\vpmpm{+-}) \WigDMat{d}(w_l),
\end{align*}
and the $1,2$ constant term is
\begin{align*}
	E^{d,12}(xy, \Phi_H, \mu_1) :=& \int_{\R^2} E^d\paren{\SmallMatrix{1&u_2&u_3\\&1&x_1\\&&1} y, \Phi_H, \mu_1} du_2\, du_3 \\
	=& 2 \frac{L(\Phi,3\mu_1)}{L(\Phi,1+3\mu_1)} \Sigma^d_{+\varepsilon_\Phi} \sum_{n_1\ne 0} \sgn(n_1)^{\alpha_\Phi} \lambda_\Phi(\abs{n_1}) \Xi^d \abs{n_1}^{\mu_\Phi} W^d(xy,w_l,\mu',\psi_{(n_1,0)}).
\end{align*}
Setting
\begin{align}
\label{eq:tildeMdef}
	\wtilde{M}^d(\Phi, \mu_1) =& \frac{L(\Phi,3\mu_1)}{L(\Phi,1+3\mu_1)} \Sigma^d_{+\varepsilon_\Phi} \WigDMat{d}(w_l) \mathcal{W}^d(0,3\mu_1+\mu_\Phi) \WigDMat{d}(w_3) \mathcal{W}^d(0,3\mu_1-\mu_\Phi) \WigDMat{d}(w_l),
\end{align}
we have
\begin{align}
	E^{d,12}(xy, \Phi, \mu_1) =& 2 \wtilde{M}^d(\Phi, \mu_1) I^d\paren{\SmallMatrix{1\\&1&x_1\\&&1}^\iota y^\iota w_l, \Phi, -\mu_1},
\end{align}
after working through the $V$ dependence.

\subsection{Initial continuation}
Assuming we have a zero-free region for $L(\Phi, 1+s)$ of the form \eqref{eq:FormalZeroFree} (for the present purposes, no uniformity in $\Phi$ is necessary and \cite[Theorem 5.10]{IK} is sufficient), and taking for granted the fact that $L(\Phi, 3\mu_1)$ has no poles on $\Re(\mu_1) > 0$ \cite[Section 8]{DFI01}, the analytic continuation follows from the absolute convergence of the Fourier expansion given by Proposition \ref{eq:FourierExpBound}.
Notice that the polynomial boundedness of the Fourier coefficients follows from \eqref{eq:MaxEisenHeckeRels1}-\eqref{eq:MaxEisenHeckeRels3} and any polynomial bound (e.g. \cite[(14.54), (16.56)]{DFI01}) on the $\lambda_\Phi(n)$.

\subsection{Functional equations}
As we did for the minimal Eisenstein series, we show the functional equations by examining the constant terms.
We wish to show
\begin{align*}
	E^{d,12}(xy, \Phi_H, \mu_1) =& \wtilde{M}^d(\Phi, \mu_1) E^{d,21}((xy)^\iota w_l, \Phi_H, -\mu_1), \\
	E^{d,21}(xy, \Phi_H, \mu_1) =& \wtilde{M}^d(\Phi, \mu_1) E^{d,12}((xy)^\iota w_l, \Phi_H, -\mu_1),
\end{align*}
and it suffices to show
\begin{align}
\label{eq:MaxFEToShow1}
	\wtilde{M}^d(\Phi, \mu_1) \Sigma^d_{+\varepsilon_\Phi} =& \wtilde{M}^d(\Phi, \mu_1), \\
\label{eq:MaxFEToShow2}
	\wtilde{M}^d(\Phi, \mu_1) \wtilde{M}^d(\Phi, -\mu_1) &= \Sigma^d_{+\varepsilon_\Phi}.
\end{align}
The first statement follows by conjugating each $v\in V$ to the left and using $\Sigma^d_{+\varepsilon_\Phi}\Sigma^d_{+\varepsilon_\Phi}=\Sigma^d_{+\varepsilon_\Phi}$.

The functional equation of the $L$-function may be expressed in the form
\[ L_\infty(\Phi,s) L(\Phi,s) = \eta_\Phi L_\infty(\Phi,1-s) L(\Phi,1-s), \]
where
\[ L_\infty(\Phi,s) = \Gamma_\R\paren{s+\mu_\Phi} \Gamma_\R\paren{1+s+\mu_\Phi}, \qquad \eta_\Phi = i^\kappa, \]
for $\Phi$ a holomorphic modular form of weight $\kappa > 0$,
\[ L_\infty(\Phi,s) = \Gamma_\R\paren{s+\mu_\Phi} \Gamma_\R\paren{s-\mu_\Phi}, \qquad \eta_\Phi = 1, \]
for $\Phi$ an even spherical Maass form (i.e. $\varepsilon_\Phi=1$), and
\[ L_\infty(\Phi,s) = \Gamma_\R\paren{1+s+\mu_\Phi} \Gamma_\R\paren{1+s-\mu_\Phi}, \qquad \eta_\Phi = -1, \]
for $\Phi$ an odd spherical Maass form (i.e. $\varepsilon_\Phi=-1$).
These may be found in \cite[(8.11), (8.17)]{DFI01}

For \eqref{eq:MaxFEToShow2}, in case $\Phi$ is an even spherical Maass form or a holomorphic modular form (so that $\varepsilon_\Phi=1$), we are once again only concerned with the entries on the even rows (i.e. $m'$ even).
As in \eqref{eq:mathcalWFE}, we simply need to show
\begin{align}
\label{eq:MaxFEToShow3}
	& \frac{L_\infty(\Phi,1+3\mu_1) L_\infty(\Phi,1-3\mu_1)}{L_\infty(\Phi,3\mu_1) L_\infty(\Phi,-3\mu_1)} \\
& \qquad \times \frac{\Gamma_\R\paren{3\mu_1+\mu_\Phi}\Gamma_\R\paren{-3\mu_1-\mu_\Phi}\Gamma_\R\paren{3\mu_1-\mu_\Phi}\Gamma_\R\paren{-3\mu_1+\mu_\Phi}}{\Gamma_\R\paren{1+3\mu_1+\mu_\Phi}\Gamma_\R\paren{1-3\mu_1-\mu_\Phi}\Gamma_\R\paren{1+3\mu_1-\mu_\Phi}\Gamma_\R\paren{1-3\mu_1+\mu_\Phi}} \nonumber
\end{align}
is one.
It is.

In case $\Phi$ is an odd spherical Maass form, so that $\varepsilon_\Phi=-1$, we conjugate $\Sigma^d_{+\varepsilon_\Phi}$ past $\WigDMat{d}(w_l)$ in \eqref{eq:tildeMdef}, and by \eqref{eq:SigmadRels} and \eqref{eq:Sigmadminusplus}, we see that we are only concerned with the odd rows (i.e. $m'$ odd).
From \eqref{eq:Beval}, we need to show \eqref{eq:MaxFEToShow3} multiplied by
\[ \frac{\sin\paren{\pi \frac{3\mu_1+\mu_\Phi}{2}}}{\sin\paren{\pi\frac{1+3\mu_1+\mu_\Phi}{2}}} \frac{\sin\paren{\pi \frac{-3\mu_1+\mu_\Phi}{2}}}{\sin\paren{\pi\frac{1-3\mu_1+\mu_\Phi}{2}}} \frac{\sin\paren{\pi \frac{3\mu_1-\mu_\Phi}{2}}}{\sin\paren{\pi\frac{1+3\mu_1-\mu_\Phi}{2}}} \frac{\sin\paren{\pi \frac{-3\mu_1-\mu_\Phi}{2}}}{\sin\paren{\pi\frac{1-3\mu_1-\mu_\Phi}{2}}} \]
is one (the four factors $(-1)^{(m'+1)/2}$ cancel).
It is.

The functional equation follows by the same method as in the minimal parabolic case in section \ref{sect:MinEisenFEs} except that we unfold now to the space
\[ P_{21}(\Z)\backslash G/K \cong \paren{U(\Z)\backslash\set{\SmallMatrix{1&0&*\\&1&*\\&&1}}} \times (SL(2,\Z)\backslash SL(2,\R)/SO(2,\R)) \times \R^+, \]
and the integral of $f(g) :=E^d(xy, \Phi, \mu_1) - \wtilde{M}^d(\Phi, \mu_1) E^d((xy)^\iota w_l, \Phi, -\mu_1)$ over the space $U(\Z)\backslash\set{\SmallMatrix{1&0&*\\&1&*\\&&1}}$ is zero.

Set $\mu'' = -\wbar{\mu'}$.
On $\Re(\mu_1)=0$ and for a spherical Maass form this leaves $\mu''=\mu'$, but for a holomorphic modular form, this has the effect of replacing $\mu_\Phi \mapsto -\mu_\Phi$.
We define $\what{M}^d(\Phi, \mu_1)$ as in \eqref{eq:tildeMdef}, but with $\mu'$ replaced with $\mu''$.
Then we arrive at
\begin{prop}
\label{prop:MaxFE}
	The matrix-valued Eisenstein series $L(\Phi,1+3\mu_1) E^d(xy, \Phi, \mu_1)$ has holomorphic continuation to all $\mu_1\in\C$, with functional equations given by
	\begin{align*}
		E^d(xy, \Phi_H, \mu_1) =& \wtilde{M}^d(\Phi, \mu_1) E^d((xy)^\iota w_l, \Phi_H, -\mu_1), \\
		E^d(xy, \Phi_{\what{H}}, \mu_1) =& \what{M}^d(\Phi, \mu_1) E^d((xy)^\iota w_l, \Phi_{\what{H}}, -\mu_1).
	\end{align*}

	The matrices $\wtilde{M}^d(\Phi, \mu_1)$ are orthogonal in the sense that
	\[ \trans{\wbar{\what{M}^d(\Phi, \mu_1)}} \wtilde{M}^d(\Phi, \mu_1) = \Sigma^d_{+\varepsilon_\Phi} \text{ for } \Re(\mu_1)=0. \]
\end{prop}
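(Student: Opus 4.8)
\emph{Proof strategy.} The plan is to assemble the proposition from the constant-term analysis of Section~\ref{sect:MaxParaConstTerms} together with an $L^2$-unfolding argument modelled on Section~\ref{sect:MinEisenFEs}. First I would dispose of the two scalar identities \eqref{eq:MaxFEToShow1} and \eqref{eq:MaxFEToShow2}, since everything else rests on them. Identity \eqref{eq:MaxFEToShow1} is immediate on conjugating each $v\in V$ in \eqref{eq:tildeMdef} to the left and invoking $\Sigma^d_{+\varepsilon_\Phi}\Sigma^d_{+\varepsilon_\Phi}=\Sigma^d_{+\varepsilon_\Phi}$. For \eqref{eq:MaxFEToShow2} I would write out $\wtilde{M}^d(\Phi,\mu_1)\wtilde{M}^d(\Phi,-\mu_1)$ from \eqref{eq:tildeMdef}: using that $\mathcal{W}^d(0,u)$ is diagonal and commutes with $\Sigma^d_{+\varepsilon_\Phi}$ (by \eqref{eq:W0v}) and, after conjugating the outer $\WigDMat{d}(w_l)$'s by \eqref{eq:SigmadRels}, with $\WigDMat{d}(w_3)$, the product collapses to the diagonal matrix consisting of the ratio $L(\Phi,3\mu_1)L(\Phi,-3\mu_1)/\paren{L(\Phi,1+3\mu_1)L(\Phi,1-3\mu_1)}$ times a fourfold product of $\mathcal{W}^d(0,\pm 3\mu_1\pm\mu_\Phi)$; inserting \eqref{eq:Beval} (and \eqref{eq:WevenRows} when $\varepsilon_\Phi=1$), the functional equation of $L(\Phi,s)$, and restricting to the rows selected by $\Sigma^d_{+\varepsilon_\Phi}$ (even rows for $\varepsilon_\Phi=1$, odd rows for $\varepsilon_\Phi=-1$ after \eqref{eq:SigmadRels} and \eqref{eq:Sigmadminusplus}) reduces the claim to the numerical identity \eqref{eq:MaxFEToShow3}, respectively \eqref{eq:MaxFEToShow3} times the displayed quotient of sines, each of which collapses to $1$. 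Combined with the explicit forms of $E^{d,21}$ and $E^{d,12}$ recorded in Section~\ref{sect:MaxParaConstTerms}, \eqref{eq:MaxFEToShow1} and \eqref{eq:MaxFEToShow2} yield the pair of constant-term functional equations
\begin{align*}
	E^{d,12}(xy,\Phi_H,\mu_1) &= \wtilde{M}^d(\Phi,\mu_1)\, E^{d,21}((xy)^\iota w_l,\Phi_H,-\mu_1), \\
	E^{d,21}(xy,\Phi_H,\mu_1) &= \wtilde{M}^d(\Phi,\mu_1)\, E^{d,12}((xy)^\iota w_l,\Phi_H,-\mu_1).
\end{align*}

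Next I would prove the global functional equation by the norm argument of Section~\ref{sect:MinEisenFEs}. Put $f(g):=E^d(xy,\Phi_H,\mu_1)-\wtilde{M}^d(\Phi,\mu_1)E^d((xy)^\iota w_l,\Phi_H,-\mu_1)$ on the intersection of the relevant zero-free regions, and write $\norm{f}_2^2=\innerprod{E^d(\cdot,\Phi_H,\mu'_1),f}-\innerprod{\wtilde{M}^d(\Phi,\mu'_1)E^d((\cdot)^\iota w_l,\Phi_H,-\mu'_1),f}$ for a free parameter $\mu'_1$ to be continued. The constant-term identities just established show that all degenerate Fourier coefficients of $f$ (equivalently its $1,1,1$, $2,1$ and $1,2$ constant terms) vanish, so $f$ is cuspidal and hence, by Proposition~\ref{eq:FourierExpBound}, decays super-polynomially in a Siegel domain; in particular $f\in L^2(\Gamma\backslash G)$ and both inner products make sense after continuation. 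Moving $\mu'_1$ into the region of absolute convergence of the defining series for $E^d(\cdot,\Phi_H,\mu'_1)$, where the combined sum--integral converges absolutely, I unfold the first inner product over $P_{21}(\Z)\backslash G$; since $I^d(\cdot,\Phi_H,\mu'_1)$ is constant along the unipotent radical of $P_{21}$, the integral reduces to the $2,1$ constant term of $f$ integrated against a power function, hence $0$, and analytic continuation gives that the first inner product vanishes identically. Unfolding the second over $P_{12}(\Z)\backslash G$ (equivalently via the $\wtilde{E}^d$-realization and \eqref{eq:MaxEDualFE}) picks out the $1,2$ constant term of $f$ and vanishes likewise, whence $f\equiv 0$. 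The statement for $\Phi_{\what{H}}$ then follows by the functional equation \eqref{eq:ClassWhittFE} of the classical Whittaker function, which for holomorphic $\Phi$ interchanges the $\Phi_H$ and $\Phi_{\what{H}}$ normalizations via $\mu_\Phi\mapsto-\mu_\Phi$, i.e.\ $\mu'\mapsto\mu''$, turning $\wtilde{M}^d$ into $\what{M}^d$.

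For the holomorphic continuation, the Fourier coefficients of $L(\Phi,1+3\mu_1)E^d(xy,\Phi,\mu_1)$ are entire in $\mu_1$: the only denominator in \eqref{eq:MaxEisenFour1}--\eqref{eq:MaxEisenFour2} is $L(\Phi,1+3\mu_1)$, which is cancelled, and $\lambda_E(n,\Phi,\mu_1)$ is a Laurent polynomial in $p^{3\mu_1}$ by \eqref{eq:MaxEisenHecke} and \eqref{eq:MaxEisenHeckeRels1}--\eqref{eq:MaxEisenHeckeRels3}. Since the Whittaker functions occurring are holomorphic in the spectral parameters and the functional equation just proved (together with the zero-free region of $L(\Phi,1+s)$ and absolute convergence of the Fourier expansion for $\Re(\mu_1)\ge 0$ from Proposition~\ref{eq:FourierExpBound}) gives meromorphic continuation of $E^d$ to all $\mu_1$, the Fourier-expansion identity persists to all $\mu_1$ and exhibits $L(\Phi,1+3\mu_1)E^d$ as holomorphic there, the apparent poles of $\wtilde{M}^d$ cancelling against the vanishing rows noted in Section~\ref{sect:MaxParaNorms}. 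Finally, for the orthogonality, on $\Re(\mu_1)=0$ one has $\wbar{\mu'}=-\mu''$ and $\wbar{L(\Phi,3\mu_1)}=L(\Phi,-3\mu_1)$ (the Hecke eigenvalues being real), while $\WigDMat{d}(w_l)$ and $\WigDMat{d}(w_3)$ are real orthogonal, the $\Sigma^d_{+\varepsilon_\Phi}$ are symmetric, and $\wbar{\mathcal{W}^d(0,u)}=\mathcal{W}^d(0,\wbar{u})$; taking the conjugate-transpose of \eqref{eq:tildeMdef} and reversing the order of the matrix product then identifies $\wbar{\trans{\what{M}^d(\Phi,\mu_1)}}$ with $\wtilde{M}^d(\Phi,-\mu_1)$, and composing with \eqref{eq:MaxFEToShow2} applied at $-\mu_1$ gives $\wbar{\trans{\what{M}^d(\Phi,\mu_1)}}\wtilde{M}^d(\Phi,\mu_1)=\Sigma^d_{+\varepsilon_\Phi}$.

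I expect the main obstacle to be the global functional-equation step: it is only legitimate once one knows $f\in L^2$, which in turn requires the constant-term functional equations to be already in hand, and the unfolding over the comparatively intricate quotient $P_{21}(\Z)\backslash G/K\cong\paren{U(\Z)\backslash\set{\SmallMatrix{1&0&*\\&1&*\\&&1}}}\times(SL(2,\Z)\backslash SL(2,\R)/SO(2,\R))\times\R^+$ must be justified using only the crude growth available in the region of absolute convergence. A secondary source of friction is keeping the $\iota$-involution and the $\Sigma^d_{+\varepsilon_\Phi}$-bookkeeping straight throughout, in particular the parity case distinction for odd spherical Maass forms entering \eqref{eq:MaxFEToShow2} and the orthogonality.
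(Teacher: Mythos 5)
Your proposal is correct and follows essentially the same route as the paper: the two matrix identities \eqref{eq:MaxFEToShow1}--\eqref{eq:MaxFEToShow2} reduced to the gamma-factor identity \eqref{eq:MaxFEToShow3} (with the sine quotient in the odd case), the global functional equation via the $L^2$-norm and unfolding argument of section \ref{sect:MinEisenFEs} adapted to $P_{21}(\Z)\backslash G$, and the orthogonality from $\trans{\wbar{\what{M}^d(\Phi,\mu_1)}}=\wtilde{M}^d(\Phi,-\mu_1)$ on $\Re(\mu_1)=0$. The paper leaves the unfolding and cuspidality of $f$ largely implicit, so your more detailed account of those steps is a faithful expansion rather than a different argument.
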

The orthogonality follows from the previous computations, upon noticing that
\[ \trans{\wbar{\what{M}^d(\Phi, \mu_1)}} = \wtilde{M}^d(\Phi, -\mu_1), \]
when $\Re(\mu_1)=0$.

\subsection{The $2,1$ and $1,2$ constant terms of the spectral expansion}
We assume that $f:SL(3,\Z)\backslash SL(3,\R) \to \C$ is Schwartz class, and consider the $2,1$ constant term in the form
\[ f^{21}(xyk) := \int_0^1 \int_0^1 f\paren{\SmallMatrix{1&x_2&u_3\\&1&u_1\\&&1}yk} du_1 \, du_2, \]
and $f^{21}_c := (f-f_0)^{21}$ with $f_0$ as in Theorem \ref{thm:ContResSpectralExpand}.
As before, we expand the $K$-part; then applying Mellin inversion gives
\begin{align*}
	f^{21}_c(xyk) =& \sum_{d=0}^\infty (2d+1) \sum_{m'=-d}^d \frac{1}{2\pi i} \int_{\Re(\mu_1) = -2} (y_1^2 y_2)^{\frac{1}{2}+\mu_1} \WigDRow{d}{m'}(k) \int_0^\infty \int_0^1 \int_0^1 \\
	& \qquad \int_{K} (f-f_0)\paren{\SmallMatrix{1&x_2&u_3\\&1&u_1\\&&1} \SmallMatrix{\sqrt{t_1 y_2}\\&\sqrt{t_1/y_2}\\&&1}k'} \wbar{\trans{\WigDRow{d}{m'}(k')}} dk' \, du_3 \, du_1 \, t_1^{\frac{1}{2}-\mu_1} \frac{dt_1}{t_1^2} \, d\mu_1.
\end{align*}
The function of $x_2$ and $y_2$ at each $m'$ transforms as a weight $m'$ Maass form which is cuspidal by definition of $f_0$, so we may expand it over an orthonormal basis of Hecke eigenforms $\wtilde{\mathcal{S}}_{m'}$.
\begin{align*}
	& f^{21}_c(xyk) = \\
	& \sum_{d=0}^\infty (2d+1) \sum_{m'=-d}^d \sum_{\phi\in\wtilde{\mathcal{S}}_{m'}} \frac{1}{2\pi i} \int_{\Re(\mu_1) = -2} (y_1^2 y_2)^{\frac{1}{2}+\mu_1} \phi(x_2+iy_2) \WigDRow{d}{m'}(k) \int_0^\infty \int_{\mathcal{F}_2} \int_0^1 \int_0^1 \\
	& \int_{K} (f-f_0)\paren{\SmallMatrix{1&x_2'&x_3'\\&1&x_1'\\&&1} \SmallMatrix{\sqrt{t_1 t_2}\\&\sqrt{t_1/t_2}\\&&1}k'} \wbar{\phi(x_2'+it_2)} \wbar{\trans{\WigDRow{d}{m'}(k')}} dk' \, dx_3' \, dx_1' \, \frac{dx_2'\,dt_2}{t_2^2} t_1^{\frac{1}{2}-\mu_1} \frac{dt_1}{t_1^2} \, d\mu_1,
\end{align*}
where $\mathcal{F}_2$ is a fundamental domain for $SL(2,\Z)\backslash SL(2,\R)/SO(2,\R)$.

Now substituting $(t_1,t_2) \mapsto (y_1^2 y_2, y_2)$ gives
\begin{align*}
	f^{21}_c(xyk) =& 2 \sum_{d=0}^\infty \sum_{m'=-d}^d \sum_{\phi\in\wtilde{\mathcal{S}}_{m'}} \frac{(2d+1)}{2\pi i} \int_{\Re(\mu_1) = -2} (y_1^2 y_2)^{\frac{1}{2}+\mu_1} \phi(x_2+iy_2) \WigDRow{d}{m'}(k) \int_{P_{21}\backslash G} \\
	& \qquad f\paren{g'} \wbar{\phi(x_2'+iy_2')} \wbar{\trans{\WigDRow{d}{m'}(k')}} ({y_1'}^2 y_2')^{\frac{1}{2}-\mu_1} dg' \, d\mu_1,
\end{align*}
with the usual coordinates $g'=x'y'k'$.
By folding, this is
\begin{align*}
	f^{21}_c(xyk) =& 2 \sum_{d=0}^\infty \sum_{m'=-d}^d \sum_{\phi\in\wtilde{\mathcal{S}}_{m'}} \frac{(2d+1)}{2\pi i} \int_{\Re(\mu_1) = 0} (y_1^2 y_2)^{\frac{1}{2}+\mu_1} \phi(x_2+iy_2) \WigDRow{d}{m'}(k) \\
	& \qquad \int_{\Gamma\backslash G} f\paren{g'} \wbar{\trans{E^d_{m',\cdot}(g', \phi, -\wbar{\mu_1})}} \, dg' \, d\mu_1,
\end{align*}
after shifting $\Re(\mu_1) \mapsto 0$, as we may.
We have dropped $f_0$ from this last equation since $E^d(g,\mu)$ and hence $f_0$ are orthogonal to functions which have no $1,1,1$ constant term, such as $E^d_{m',\cdot}(g', \phi, -\wbar{\mu_1})$.

Let $\mathcal{S}^d_{2,\text{cusp}}$ be the set of all minimal (non-negative) weight ancestors of each $\wtilde{\mathcal{S}}_{m'}$, $\abs{m'}\le d$.
Then we may collect the previous expression into the form
\begin{align*}
	f^{21}_c(xyk) =& 2 \sum_{d=0}^\infty \sum_{\Phi\in\mathcal{S}^d_{2,\text{cusp}}} \frac{(2d+1)}{2\pi i} \int_{\Re(\mu_1) = 0} \Tr\biggl((y_1^2 y_2)^{\frac{1}{2}+\mu_1} \Phi^d(x_2+iy_2) \WigDMat{d}(k) \\
	& \qquad \int_{\Gamma\backslash G} f\paren{g'} \wbar{\trans{E^d(g', \Phi, \mu_1)}} \, dg' \biggr) d\mu_1.
\end{align*}

We switch to Hecke normalizations
\begin{align*}
	f^{21}_c(xyk) =& 2 \sum_{d=0}^\infty \sum_{\Phi\in\mathcal{S}^d_{2,\text{cusp}}} \frac{(2d+1)}{2\pi i} \int_{\Re(\mu_1) = 0} \Tr\biggl(\wbar{\Gamma^d_\Phi} \what{\Gamma}^d_\Phi (y_1^2 y_2)^{\frac{1}{2}+\mu_1} \Phi_{\what{H}}^d(x_2+iy_2) \WigDMat{d}(k) \\
	& \qquad \int_{\Gamma\backslash G} f\paren{g'} \wbar{\trans{E^d(g', \Phi_H, \mu_1)}} \, dg' \biggr) d\mu_1,
\end{align*}
keeping in mind that $\wbar{\Gamma^d_\Phi} \what{\Gamma}^d_\Phi$ is a scalar matrix.

The equation \eqref{eq:PhidFE} implies
\begin{align}
\label{eq:MaxEisenVFE}
	E^d(g', \Phi, \mu_1)=\Sigma^d_{+\varepsilon_\Phi} E^d(g', \Phi, \mu_1)
\end{align}
in the same manner as it did for $I^d$.
Applying the cycle-invariance of the trace, we recognize the $2,1$ constant term of the Eisenstein series,
\begin{align}
\label{eq:f21Expand}
	f^{21}_c(g) =& \sum_{d=0}^\infty \sum_{\Phi\in\mathcal{S}^d_{2,\text{cusp}}} \frac{(2d+1)}{2\pi i} \int_{\Re(\mu_1) = 0} \Tr\biggl(\wbar{\Gamma^d_\Phi} \what{\Gamma}^d_\Phi E^{d,21}(g, \Phi_{\what{H}}, \mu_1) \\
	& \qquad \int_{\Gamma\backslash G} f\paren{g'} \wbar{\trans{E^d(g', \Phi_H, \mu_1)}} \, dg' \biggr) d\mu_1. \nonumber
\end{align}
(There is a sneaky point that when $\Phi_{\what{H}}$ is coming from a holomorphic modular form of weight $\kappa > 0$, we are not trying to attach meaning to the nonsensical rows $E^{d,21}_{m'}(g, \Phi_{\what{H}}, \mu_1)$ for $\abs{m'} < \kappa$ as these are zero in $E^d(g', \Phi_H, \mu_1)$ and so they drop out of the former by cycle invariance.)

At last, we return to $L^2$ normalizations to conclude that $f-f_0-f_1$, with $f_1$ as in Theorem \ref{thm:ContResSpectralExpand}, has no $2,1$ constant term.

We are left to show that the previous display is also lacking a $1,2$ constant term, but if we apply the construction up to \eqref{eq:f21Expand} to the function $\wtilde{f}(g) = f(g^\iota)$, we see that
\begin{align*}
	f^{12}_c(g) = \wtilde{f}^{21}_c(g^\iota)=& \sum_{d=0}^\infty \sum_{\Phi\in\mathcal{S}^d_{2,\text{cusp}}} \frac{(2d+1)}{2\pi i} \int_{\Re(\mu_1) = 0} \Tr\biggl(\wbar{\Gamma^d_\Phi} \what{\Gamma}^d_\Phi E^{d,21}(g^\iota, \Phi_{\what{H}}, \mu_1) \\
	& \qquad \int_{\Gamma\backslash G} f\paren{(g')^\iota} \wbar{\trans{E^d(g', \Phi_H, \mu_1)}} \, dg' \biggr) d\mu_1.
\end{align*}

Now applying \eqref{eq:MaxEisenVFE} in conjunction with the orthogonality from Proposition \ref{prop:MaxFE} gives
\begin{align*}
	E^d(g', \Phi, \mu_1)=\trans{\wbar{\what{M}^d(\Phi, -\mu_1)}} \wtilde{M}^d(\Phi, -\mu_1) E^d(g', \Phi, \mu_1),
\end{align*}
and then the functional equation gives
\begin{align*}
	E^d(g', \Phi, \mu_1)=\trans{\wbar{\what{M}^d(\Phi, -\mu_1)}} E^d((g')^\iota w_l, \Phi, \mu_1) \WigDMat{d}(w_l).
\end{align*}
We finish by the cycle-invariance and the fact that $\trans{\wbar{\WigDMat{d}(w_l)}} = \WigDMat{d}(w_l^{-1})$.
By the same argument as for the minimal parabolic part, $f_1$ will be square-integrable.

\section{The residual spectrum}
\label{sect:ResSpec}
Let us consider the contour shifting in section \ref{sect:111const} more carefully.
Dropping the assumption that $f$ is orthogonal to the residues of the Eisenstein series, we assume instead that $f$ is orthogonal to the constant function.
To obtain \eqref{eq:111constShifted}, we shifted contours from $\Re(\mu)=(-2,0,2)$ to $\Re(\mu)=0$.
Now we use three steps, shifting first to $\Re(\mu_1-\mu_2) = \eta$ for some small $\eta > 0$, while maintaining $\Re(\mu_2-\mu_3)=-2$.
This may be done through a change of variables, say $u=(\mu_1-\mu_2,\mu_2-\mu_3)$.
We possibly encounter simple poles in the Fourier coefficients of the Eisenstein series at $-\wbar{(\mu_1-\mu_2)}=1$.
For the shifted contour, now at $\Re(\mu)=(-\frac{2}{3}+\eta,-\frac{2}{3},\frac{4}{3})$, we again shift contours to $\Re(\mu_2-\mu_3)=0$, while maintaining $\Re(\mu_1-\mu_2)=\eta$.
Again we may have encountered poles at $-\wbar{(\mu_2-\mu_3)}=1$ and $-\wbar{(\mu_1-\mu_3)}=1$, which we may take to be distinct.
Finally, we shift to $\Re(\mu_1-\mu_2)=0$; the resulting integral of this third contour shift is \eqref{eq:111constShifted}.
The residues now have the form
\begin{align}
\label{eq:Res}
	& \sum_{i=1}^3 \sum_{d=0}^\infty \frac{(2d+1)}{2\pi i} \int_{\Re(s) = c_i} \Tr\Bigl(f_i(y) \WigDMat{d}(k) \int_{\Gamma\backslash G} f(g') \wbar{\trans{F^d_i(g', -\wbar{s})}} dg' \Bigr) \, ds,
\end{align}
where
\begin{align*}
	c_1 = -\frac{5}{6}, \qquad c_2=\frac{1}{6}+\eta, \qquad c_3=\frac{1}{6}-\eta,
\end{align*}
\begin{align*}
	f_1(y) =& y_1^{1+2s} y_2^{\frac{1}{2}+s}=p_{\rho+\mu_F^{w_2}}(y), & F^d_1(g,s) :=& \left. \res_{u_1=1} E^d(g, \mu) \right|_{u_2=-\frac{1}{2}+3s}, \\
	f_2(y)=& y_1^{\frac{1}{2}-s} y_2^{1-2s}=p_{\rho+\mu_F^{w_l}}(y), & F^d_2(g,s) :=& \left. \res_{u_2=1} E^d(g, \mu) \right|_{u_1=\frac{1}{2}+3s} = M^d(w_5, \mu_F^{w_4}) F^d_1(g,s), \\
	f_3(y)=& y_1^{\frac{1}{2}-s} y_2^{\frac{1}{2}+s}=p_{\rho+\mu_F^{w_5}}(y), & F^d_3(g,s) :=& \left. \res_{u_1=1-u_2} E^d(g, \mu) \right|_{u_2=\frac{1}{2}-3s} = M^d(w_3, \mu_F^{w_3}) F^d_1(g,s),
\end{align*}
with
\[ \mu_F = (\tfrac{1}{2}+s,-\tfrac{1}{2}+s,-2s). \]
We have used the functional equations of Proposition \ref{prop:MinEisenFEs}, with, for example, the fact that $\Gamma_E(w_3,\mu^{w_3})$, and hence $M(w_3,\mu^{w_3})$, are holomorphic at $\mu_1-\mu_2=1$, provided $\mu_2-\mu_3 \notin \Z$.

From section \ref{sect:MinEisenFour}, we have the Fourier coefficients of the function $F^d_1(g,\mu_1)$.
The poles occur in the $\Gamma_E(w,\mu)$, and we see
\begin{align*}
	\rho_{F^d_1}(w,n,s) = \abs{V} \zeta_0(w, s, \psi_n) p_{-\mu_F^w}(\wtilde{n}) \Sigmachi{d}{++},
\end{align*}
where for $n_1 n_2 \ne 0$, $\zeta_0(w, s, \psi_n)=0$; for $n_1=n_2=0$,
\begin{align*}
	\zeta_0(w, \mu_1, \psi_n) = \frac{6}{\pi^2} \times \piecewise{
1 & \If w=w_2, \\[5pt]
\frac{\zeta(\frac{1}{2}+3s)}{\zeta(\frac{3}{2}+3s)} & \If w=w_5, \\[5pt]
\frac{\zeta(-\frac{1}{2}+3s)}{\zeta(\frac{3}{2}+3s)} & \If w=w_l \\
0 &\Otherwise,}
\end{align*}
for $n_1=0, n_2\ne 0$,
\begin{align*}
	\zeta_0(w, \mu_1, \psi_n) =& \frac{6}{\pi^2} \delta_{w=w_l} \frac{\Gamma_\R\paren{\frac{1}{2}-3s}\zeta(\frac{1}{2}-3s)\sigma_{\frac{1}{2}-3s}(\abs{n_2})}{\Gamma_\R\paren{\frac{1}{2}+3s} \zeta(\tfrac{1}{2}+3s) \zeta(\tfrac{3}{2}+3s)} \\
	=& \frac{6}{\pi^2} \delta_{w=w_l} \frac{\sigma_{\frac{1}{2}-3s}(\abs{n_2})}{\zeta(\tfrac{3}{2}+3s)}
\end{align*}
and for $n_1\ne 0, n_2=0$,
\begin{align*}
	\zeta_0(w, \mu_1, \psi_n) =& \frac{6}{\pi^2} \delta_{w=w_5} \frac{\sigma_{-\frac{1}{2}-3s}(\abs{n_1})}{\zeta(\tfrac{3}{2}+3s)}.
\end{align*}

We shift the contours in \eqref{eq:Res} back to $\Re(s)=0$.
An easy approach to handle the poles of the $F^d_i(g,s)$ is to notice that the functions $\WigDMat{d}(k)$, $y_1^{1/3} y_2^{2/3} \WigDMat{d}(k)$, and $y_1^{2/3} y_2^{1/3} \WigDMat{d}(k)$ are not $\Gamma$ invariant except the first in the particular case $d=0$.
We would like to address the poles directly.
This will require some information about $\mathcal{W}^d(0,u)$ at three points.

First, from the explicit form \eqref{eq:Beval} of $\mathcal{W}_m(0,1)$, we see that $\Sigmachi{d}{++} \mathcal{W}^d(0,1) = \pi \Delta^d$, where $\Delta^d$ is the matrix with the single, central entry $\Delta^d_{0,0}=1$.
Second, the even entries of $\mathcal{W}^d(0,u)$ have simple poles at $u=0$ with residue $2 \cos\frac{\pi m}{2}$.  We may express this as
\[ \res_{u=0} \mathcal{W}^d(0,u) = \Dtildek{d}{i}+\Dtildek{d}{-i}=\WigDMat{d}(\vpmpm{+-} w_2)+\WigDMat{d}(\vpmpm{--} w_2). \]
Lastly, we need the central entry of
\[ \WigDMat{d}(w_3) \mathcal{W}^d(0,2) \WigDMat{d}(w_5) \vpmpm{+-}, \]
but if we expand the integral \eqref{eq:Wint} and apply the substitution $x=\cot \beta$, we may see this is
\[ 2\innerprod{\WigD{d}{0}{0}, \WigD{0}{0}{0}}_{L^2(K)} = 2\delta_{d=0}, \]
since $\frac{1}{\sqrt{2d+1}} \WigD{d}{0}{0}$ and $\WigD{0}{0}{0}=1$ are elements of an orthonormal basis.

$F^d_1(xy,s)$ has a possible pole at $s=\frac{1}{2}$ with residue
\begin{align*}
	\frac{8}{\pi^2 \zeta(3)} p_{\rho+\mu_F^{w_l}}(y) \Sigmachi{d}{++} W^d(I,w_l,\mu_F,\psi_{00}) &=\frac{16}{\zeta(3)} \delta_{d=0},
\end{align*}
and a possible pole at $s=\frac{1}{6}$ with residue
\begin{align*}
	\frac{48}{\pi^4} p_{\rho+\mu_F^{w_5}}(y) \Sigmachi{d}{++} W^d(I,w_5,\mu_F,\psi_{00}) +\frac{72}{\pi^4} p_{\rho+\mu_F^{w_l}}(y) \Sigmachi{d}{++} \res_{s=\frac{1}{6}} W^d(I,w_l,\mu_F,\psi_{00}) &= 0.
\end{align*}
(Here we used $\Sigmachi{d}{++} \WigDMat{d}(w_2) \Delta^d = \Sigmachi{d}{++} \Delta^d$.)
From similar computations, we see $F^d_2(xy,s)$ has a possible pole at $s=-\frac{1}{6}$ with residue
\begin{align*}
	\frac{48}{\pi^4} p_{\rho+\mu_F^{w_4 w_4}}(y) \Sigmachi{d}{++} W^d(I,w_4,\mu_F^{w_4},\psi_{00}) +\frac{72}{\pi^4} p_{\rho+\mu_F^{w_4 w_l}}(y) \Sigmachi{d}{++} \res_{s=-\frac{1}{6}} W^d(I,w_l,\mu_F^{w_4},\psi_{00}) &=0.
\end{align*}
The contour for $F^d_3(xyk,s)$ is already to the right of all of the poles.

Since $\mathcal{W}^d(0,1) = \Delta^d \mathcal{W}^d(0,1)$, the Whittaker functions
\[ \Sigmachi{d}{++} W^d(y,w_2,\mu_F,\psi_{00}) = \pi p_{\rho+\mu_F^{w_2}}(y) \Delta^d, \]
$\Sigmachi{d}{++} W^d(y,w_5,\mu_F,\psi_{(n_1,0)})$, and $\Sigmachi{d}{++} W^d(y,w_l,\mu_F,\psi_{(0,n_2)})$ are all left-invariant by $\Delta^d$, and so is $F^d_1(g,\mu_1)$.
Using this fact, and the cycle-invariance of the trace, the shifted integral from \eqref{eq:Res} is
\begin{align*}
	& \frac{1}{2} \sum_{d=0}^\infty \frac{(2d+1)}{2\pi i} \int_{\Re(s) = 0} \Tr\Bigl(H^d(y,s) \WigDMat{d}(k) \int_{\Gamma\backslash G} f(g') \wbar{\trans{F^d_1(g', s)}} dg' \Bigr) \, ds,
\end{align*}
where
\[ H^d(y,s) = p_{\rho+\mu_F^{w_2}}(y) \Delta^d +p_{\rho+\mu_F^{w_l}}(y) \Delta^d \wbar{\trans{M^d(w_5, \mu_F^{w_4})}}+p_{\rho+\mu_F^{w_5}}(y) \Delta^d \wbar{\trans{M^d(w_3,\mu_F^{w_3})}}. \]

Expanding the definitions of $M^d(w_5, \mu_F^{w_4})$ shows
\[ H^d(y,s) = \frac{\pi}{24} F^{d,111}_1(y,s), \]
and the contribution of the residual spectrum to \eqref{eq:111constShifted} will be
\begin{align}
\label{eq:ResContribF1}
	& \frac{\pi}{24} \sum_{d=0}^\infty \frac{(2d+1)}{2\pi i} \int_{\Re(s) = 0} \Tr\Bigl(F^d_1(g,s) \int_{\Gamma\backslash G} f(g') \wbar{\trans{F^d_1(g', s)}} dg' \Bigr) \, ds.
\end{align}

\subsection{The maximal Eisenstein series attached to the constant function}
We start with $\phi = \sqrt{\frac{3}{\pi}}$, the constant function on $SL(2,\Z)\backslash SL(2,\R)$.
In the style of sections \ref{sect:MaxParaConstruct} and \ref{sect:MaxParaNorms}, we define the Hecke-normalized $\Phi_H^d$ to be the diagonal matrix with the single nonzero entry $\Phi^d_{H,0,0}=1$, and $E^d(g,\Phi_H,\mu_1)$ in an identical manner.
The spectral parameter in the sense of \eqref{eq:MaxEisenMu} is $\mu_\Phi=-\frac{1}{2}$.

The Fourier coefficients can be obtained by computing as in section \ref{sect:MaxParaFourier}, and we see
\[ F^d_1(g,\mu_1) = \frac{12}{\pi} E^d(g,\Phi_H,\mu_1). \]
(If one believes such an equality should hold, it is easiest to check the constant by comparing the Weyl cell of the identity in $E^d(g,\Phi_H,\mu_1)$ to the $w_2$ constant term in $F^d_1(g,\mu_1)$.)

Comparing to \eqref{eq:ResContribF1}, the contribution of the residual spectrum in \eqref{eq:111constShifted} will be
\begin{align*}
	& \frac{6}{\pi} \sum_{d=0}^\infty \frac{(2d+1)}{2\pi i} \int_{\Re(\mu_1) = 0} \Tr\Bigl(E^d(g,\Phi_H,\mu_1) \int_{\Gamma\backslash G} f(g') \wbar{\trans{E^d(g',\Phi_H,\mu_1)}} dg' \Bigr) \, d\mu_1,
\end{align*}
and if $\Phi = \sqrt{\frac{6}{\pi}}\Phi_H$, then this is
\begin{align*}
	& \sum_{d=0}^\infty \frac{(2d+1)}{2\pi i} \int_{\Re(\mu_1) = 0} \Tr\Bigl(E^d(g,\Phi,\mu_1) \int_{\Gamma\backslash G} f(g') \wbar{\trans{E^d(g',\Phi,\mu_1)}} dg' \Bigr) \, d\mu_1.
\end{align*}

\section{Acknowledgements}
The author would like to thank Valentin Blomer, Stephen D. Miller, Xiaoqing Li, James Cogdell and Joseph Hundley for their comments and helpful discussions along the way.

\bibliographystyle{amsplain}

\bibliography{HigherWeight}

\end{document}